\newcommand*{\medcap}{\mathbin{\scalebox{1.5}{\ensuremath{\cap}}}}%
\tikzset{square matrix/.style={
    matrix of nodes,
    column sep=-\pgflinewidth, row sep=-\pgflinewidth,
    nodes={draw,
      minimum height=4.5pt,
      anchor=center,
      text width=4.5pt,
      align=center,
      inner sep=0pt
    },
  },
  square matrix/.default=1.2cm
}
\newtheorem{theo}{Theorem}
\newtheorem{lemma}{Lemma}
\newtheorem{conj}{Conjecture}
\newtheorem{prop}{Proposition}
\begin{document}

\title{Cartesian product graphs and $k$-tuple total domination}

\author{Adel P. Kazemi$^{1,3}$, Behnaz Pahlavsay$^{1,4}$ and Rebecca J.\ Stones$^{2,5,}$\footnote{Stones supported by her NSF China Research Fellowship for International Young Scientists (grant number: 11550110491).} \\[1em]
$^1$ Department of Mathematics, University of Mohaghegh Ardabili, \\ P.O.\ Box 5619911367, Ardabil, Iran. \\
$^2$ College of Computer and Control Engineering, \\ Nankai University, China. \\[1em]
$^3$ Email: \url{adelpkazemi@yahoo.com} \\
$^4$ Email: \url{pahlavsayb@yahoo.com}, \url{pahlavsay@uma.ac.ir} \\
$^5$ Email: \url{rebecca.stones82@gmail.com} \\[1em]
}

\maketitle

\begin{abstract}
A $k$-tuple total dominating set ($k$TDS) of a graph $G$ is a set $S$ of vertices in which every vertex in $G$ is adjacent to at least $k$ vertices in $S$; the minimum size of a $k$TDS is denoted $\gamma_{\times k,t}(G)$.  We give a Vizing-like inequality for Cartesian product graphs, namely $\gamma_{\times k,t}(G) \gamma_{\times k,t}(H) \leq 2k \gamma_{\times k,t}(G \Box H)$ provided $\gamma_{\times k,t}(G) \leq 2k\rho(G)$, where $\rho$ is the packing number.  We also give bounds on $\gamma_{\times k,t}(G \Box H)$ in terms of (open) packing numbers, and consider the extremal case of $\gamma_{\times k,t}(K_n \Box K_m)$, i.e., the rook's graph, giving a constructive proof of a general formula for $\gamma_{\times 2, t}(K_n \Box K_m)$.
\\[0.2em]

\noindent
Keywords: $k$-tuple total domination, Cartesian product of graphs, rook's graph, Vizing's conjecture.
\\[0.2em]

\noindent
MSC(2010): 05C69.
\end{abstract}

\section{Introduction}

Domination is well-studied in graph theory and the literature on this subject has been surveyed and detailed in the two books by Haynes, Hedetniemi, and Slater~\cite{HHS5, HHS6}. Among the many variations of domination, the ones relevant to this paper is $k$-tuple domination and $k$-tuple total domination, which were introduced by Harary and Haynes \cite{HH3}, and by Henning and Kazemi \cite{HK8}, respectively.  Throughout this paper, we use standard notation as listed in Table~\ref{ta:notat}.  All graphs considered here are finite, undirected, and simple.

\begin{table}[htp]
\centering
\begin{tabular}{|p{1.62in}|p{3.55in}|}
\hline
\renewcommand{\arraystretch}{2}
$G=(V,E)$ & A graph with \emph{vertex set} $V=V(G)$ and \emph{edge set} $E=E(G)$. \\
$N_G(v)=\{u\in V: uv \in E\}$ & The \emph{open neighborhood} of vertex $v$ in $G$. \\
$N_G[v]=N_G(v) \cup \{v\}$ & The \emph{closed neighborhood} of vertex $v$ in $G$. \\
$\mathrm{deg}_G(v)=|N_{G}(v)|$ & The \emph{degree} of a vertex $v$ in $G$. \\
$\delta(G)$, $\Delta(G)$ & The \emph{minimum degree} and \emph{maximum degree} of vertices in $G$. \\
$C_n$, $K_n$ & The $n$-vertex \emph{cycle} and \emph{complete graph}. \\
\hline
$G \Box H$ & The Cartesian product of graphs $G$ and $H$. \\
$K_n \Box K_m$ & The $n \times m$ rook's graph. \\
\hline
$\gamma_{\times k}(G)$ & The $k$-tuple domination number of $G$. \\
$\gamma(G)=\gamma_{\times 1}(G)$ & The domination number of $G$. \\
$\gamma_{\times k,t}(G)$ & The $k$-tuple total domination number of $G$. \\
$\gamma_t(G)=\gamma_{\times 1,t}(G)$ & The total domination number of $G$. \\
\hline
$\rho(G)$ & The maximum cardinality of a packing (packing number). \\
$\rho^{(\mathrm{open})}(G)$ & The maximum cardinality of an open packing (open packing number). \\
\hline
\end{tabular}
\caption{Table of notation.}\label{ta:notat}
\end{table}

For a graph $G=(V,E)$ and $k \geq 1$, a set $S \subseteq V$ is called a $k$-\emph{tuple total dominating set} ($k$TDS) if every vertex $v \in V$ has at least $k$ neighbors in $S$, i.e., $|N_G(v)\cap S| \geq k$.  A $k$-\emph{tuple dominating set} ($k$DS) instead satisfies $|N_G[v]\cap S| \geq k$.  The \emph{$k$-tuple domination number} and the \emph{$k$-tuple total domination number}, which we denote $\gamma_{\times k}(G)$ and $\gamma_{\times k,t}(G)$, respectively, is the minimum cardinality of a $k$DS and a $k$TDS of $G$, respectively.  The familiar \emph{domination number} is thus $\gamma(G)=\gamma_{\times 1}(G)$.  We use min-$k$DS and min-$k$TDS to refer to $k$DSs and $k$TDSs of minimum size, respectively.

\begin{lemma}
For a graph to have a $k$-tuple dominating set (resp.\ $k$-tuple total dominating set), every vertex must have at least $k-1$ (resp.\ $k$) neighbors.
\end{lemma}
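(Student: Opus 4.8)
The plan is to argue directly from the defining inequalities, since in both cases the conclusion is forced by a simple set-containment observation. I would handle the two statements separately but in parallel, treating the total-domination case first since it is the cleaner of the two.

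First, suppose $G$ admits a $k$-tuple total dominating set $S$. By definition, every vertex $v \in V(G)$ satisfies $|N_G(v) \cap S| \geq k$. The key step is to note that $N_G(v) \cap S$ is a subset of $N_G(v)$, so $|N_G(v)| \geq |N_G(v) \cap S| \geq k$. Since $\deg_G(v) = |N_G(v)|$ by definition, this gives $\deg_G(v) \geq k$ for every vertex $v$, which is exactly the claim for the $k$TDS case.

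Next, suppose instead that $G$ admits a $k$-tuple dominating set $S$, so that every vertex $v$ satisfies $|N_G[v] \cap S| \geq k$. I would repeat the same containment argument, now with the closed neighborhood: since $N_G[v] \cap S \subseteq N_G[v]$, we obtain $|N_G[v]| \geq k$. The only additional ingredient is the identity $|N_G[v]| = \deg_G(v) + 1$, which follows because $N_G[v] = N_G(v) \cup \{v\}$ is a disjoint union. Substituting gives $\deg_G(v) + 1 \geq k$, hence $\deg_G(v) \geq k-1$, as required.

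I do not anticipate any genuine obstacle here: the entire argument rests on the trivial fact that intersecting with $S$ cannot increase cardinality, so the open (resp.\ closed) neighborhood must itself be large enough to supply the required $k$ dominating vertices. The only point worth stating carefully is the off-by-one difference between the two cases, which arises solely because the closed neighborhood contains the extra vertex $v$ itself, allowing $v$ to contribute one of the required $k$ elements in the $k$DS setting but not in the $k$TDS setting.
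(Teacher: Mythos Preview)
Your argument is correct. The paper actually states this lemma without proof, treating it as an immediate consequence of the definitions; your writeup is precisely the straightforward verification one would supply, so there is nothing to compare.
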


For example, for $k \geq 1$, a $k$-regular graph $G=(V,E)$ will have only one $k$-tuple total dominating set, namely $V$ itself.

The \emph{Cartesian product} $G \Box H$ of two graphs $G$ and $H$ is the graph with vertex set $V(G)\times V(H)$ where two vertices $(u_{1},v_{1})$ and $(u_{2},v_{2})$ are adjacent if and only if either $u_{1}=u_{2}$ and $v_{1}v_{2}\in E(H)$ or $v_{1}=v_{2}$ and $u_{1}u_{2}\in E(G)$.  For more information on product graphs see \cite{IK}.  We will be particularly interested in the case when $K_n \Box K_m$, which is known as the $n \times m$ \emph{rook's graph}, as edges represent possible moves by a rook on an $n \times m$ chess board.  The $3 \times 4$ rook's graph is drawn in Figure~\ref{fi:rook34}, along with a min-$2$TDS.

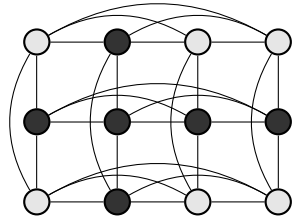
\begin{figure}[htp]
\centering
\begin{tikzpicture}
\matrix[nodes={draw, thick, fill=black!10, circle},row sep=0.7cm,column sep=0.7cm] {
  \node(11){}; &
  \node[fill=black!80](12){}; &
  \node(13){}; &
  \node(14){}; \\
  \node[fill=black!80](21){}; &
  \node[fill=black!80](22){}; &
  \node[fill=black!80](23){}; &
  \node[fill=black!80](24){}; \\
  \node(31){}; &
  \node[fill=black!80](32){}; &
  \node(33){}; &
  \node(34){}; \\
};
\draw (11) to (12) to (13) to (14); \draw (11) to[bend left] (13); \draw (12) to[bend left] (14); \draw (11) to[bend left] (14);
\draw (21) to (22) to (23) to (24); \draw (21) to[bend left] (23); \draw (22) to[bend left] (24); \draw (21) to[bend left] (24);
\draw (31) to (32) to (33) to (34); \draw (31) to[bend left] (33); \draw (32) to[bend left] (34); \draw (31) to[bend left] (34);
\draw (11) to (21) to (31) to[bend left] (11);  \draw (12) to (22) to (32) to[bend left] (12);  \draw (13) to (23) to (33) to[bend left] (13);  \draw (14) to (24) to (34) to[bend left] (14);
\end{tikzpicture}
\caption{The $3 \times 4$ rook's graph, i.e., $K_3 \Box K_4$.  The dark vertices highlight a min-$2$TDS, so $\gamma_{\times 2,t}(K_3 \Box K_4)=6$.}\label{fi:rook34}
\end{figure}

In 1963, and more formally in 1968, Vizing \cite{Viz} made an elegant conjecture that has subsequently become one the most famous open problems in domination theory.
\begin{conj}[Vizing's Conjecture]
For any graphs $G$ and $H$, $$\gamma(G)\gamma(H) \leq \gamma(G\Box H).$$
\end{conj}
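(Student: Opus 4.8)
The plan is to attack $\gamma(G)\gamma(H) \le \gamma(G\Box H)$ head-on by a partition-and-projection argument, engineered so that the final constant is exactly $1$ and not the factor $2$ that the naive version of this argument produces. Fix a minimum dominating set $D$ of $G\Box H$, so that $|D| = \gamma(G\Box H)$, together with a minimum dominating set $\{g_1,\dots,g_r\}$ of $G$, where $r=\gamma(G)$. Assigning each vertex of $G$ to one of its dominators yields a partition $V(G)=\pi_1\cup\cdots\cup\pi_r$ with $\pi_i\subseteq N_G[g_i]$, which slices $G\Box H$ into vertical slabs $\pi_i\times V(H)$. The aim is to charge the vertices of $D$ to slabs so that each slab is charged at least $\gamma(H)$, giving $\sum_i \gamma(H)=r\gamma(H)=\gamma(G)\gamma(H)$ as a lower bound on $|D|$.

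The core is a per-slab analysis. Every vertex $(u,v)$ of slab $i$ is dominated by some element of $D$ reaching it either by an $H$-move inside the same column or by a $G$-move; the $G$-move either stays within $\pi_i$ or crosses into a neighboring cell $\pi_j$. First I would establish the \emph{clean case}: if for each $i$ the $H$-coordinates of the vertices of $D$ lying in, or adjacent into, the slab $\pi_i\times V(H)$ form a dominating set of $H$, then that slab alone forces at least $\gamma(H)$ vertices of $D$, and summing over the $r$ disjoint slabs closes the inequality with constant $1$.

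The main obstacle --- and exactly why this inequality has resisted proof for over fifty years --- is \emph{cross-boundary domination}. A vertex $(u,v)\in D$ with $u\in\pi_j$ but $u$ adjacent in $G$ to $g_i$ can help dominate slab $i$ while being geometrically charged to slab $j$; counting it for both slabs double-charges it and degrades the conclusion to the Clark--Suen bound $\gamma(G)\gamma(H)\le 2\gamma(G\Box H)$. Removing this double-charge is the crux. It can be removed cleanly exactly when $G$ admits a partition into cliques realizing $\gamma(G)$ (the Barcalkin--German condition): then each cell is a clique, a crossing $G$-move into $\pi_i$ already dominates all of $\pi_i$, and a careful redistribution lets each slab be charged the full $\gamma(H)$ without reusing a vertex. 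My plan is therefore to prove the bound first for this clique-partitionable class, where constant $1$ is genuinely attainable, and then to reduce an arbitrary $G$ to it by adding edges that merge the cells $\pi_i$ into cliques while keeping $\gamma(G)$ unchanged. I expect this reduction to be the true difficulty: there is no known edge-augmentation preserving $\gamma(G)$ for \emph{all} $G$, and it is precisely at this step that every general assault on the conjecture has so far broken down.
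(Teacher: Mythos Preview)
The statement you are trying to prove is listed in the paper as a \emph{conjecture}, not a theorem; the paper does not supply a proof of it, because none is known. Vizing's Conjecture is open. So there is no ``paper's own proof'' to compare against, and any proposal that purports to establish the inequality for all $G$ and $H$ must be judged on its own merits.

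Your plan is the well-known Barcalkin--German / Clark--Suen framework: partition $V(G)$ into $\gamma(G)$ cells around a minimum dominating set, slab $G\Box H$ accordingly, and try to extract $\gamma(H)$ from each slab. You correctly identify that cross-boundary $G$-moves force double-charging and that this is exactly what produces the Clark--Suen factor $2$. You also correctly note that the double-charging vanishes when $G$ is decomposable (equivalently, when the cells can be made into cliques without lowering $\gamma(G)$), which is the Barcalkin--German theorem. But your final step --- reduce an arbitrary $G$ to the decomposable case by an edge-augmentation that preserves $\gamma(G)$ --- is not a step you actually carry out; you simply assert that you will attempt it and then concede in the last sentence that no such augmentation is known and that ``every general assault on the conjecture has so far broken down'' there. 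That concession is accurate, and it means your proposal is not a proof: the entire weight of the conjecture sits on precisely the reduction you leave unproved. Absent that reduction, what you have written recovers the known partial results (Barcalkin--German for decomposable $G$, Clark--Suen's $\gamma(G)\gamma(H)\le 2\gamma(G\Box H)$ in general) but nothing more.
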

Over more than forty years (see \cite{BDG} and references therein), Vizing's Conjecture is has been shown to hold for certain restricted classes of graphs, and furthermore, upper and lower bounds on the inequality have gradually tightened.  Additionally, research has explored inequalities (including Vizing-like inequalities) for different forms of domination \cite{HHS6}. A significant breakthrough occurred in 2000, when Clark and Suen \cite{CS1} proved that  $$\gamma (G)\gamma (H)\leq 2\gamma (G\Box H)$$ which led to the discovery of a Vizing-like inequality for total domination \cite{HR9,HPT}, i.e.,
\begin{equation}\label{eq:HRbound}
\gamma_t(G)\gamma_t(H) \leq 2 \gamma_t(G \Box H),
\end{equation}
as well as for paired \cite{BHRo,HJ,CM}, and fractional domination \cite{FRDM}, and the $\{k\}$-domination function (integer domination) \cite{BM,HL,CMH}, and total $\{k\}$-domination function \cite{HL}.

In this paper, we investigate inequalities for $k$-tuple total domination, i.e., we present lower and upper bounds on $\gamma_{\times k,t}(G \Box H)$ in terms of the orders of $G$ and $H$, the packing numbers and open packing numbers, and in terms of $\gamma_{\times k,t}(G)$ and $\gamma_{\times k,t}(H)$.  For example, Theorem~\ref{th:totalprodbound} gives a partial generalization of \eqref{eq:HRbound}.  We also find a formulas for $\gamma_{\times k,t}(K_n \Box K_m)$, and determine the value of $\gamma_{\times 2, t}(K_n \Box K_m)$ in Proposition~\ref{pr:k2count} for all $n$ and $m$.

Burchett, Lane, and Lachniet \cite{BLL} and Burchett \cite{B2011} found bounds and exact formulas for the $k$-tuple domination number and $k$-domination number of the rook's graph in square cases, i.e., $K_n \Box K_n$ (where $k$-domination is like total $k$-tuple total domination, but only vertices outside of the domination set need to be dominated).  The $k$-tuple total domination number is known for $K_n \times K_m$ \cite{HK7} and bounds are given for supergeneralized Petersen graphs \cite{KP}.

\section{General graphs}

A subset $S\subseteq V(G)$ is a \emph{packing} (resp.\ \emph{open packing}) if the closed (resp.\ open) neighborhoods of vertices in $S$ are pairwise disjoint. The \emph{packing number} (resp.\ \emph{open packing number}) of $G$, denoted $\rho(G)$ (resp.\ $\rho^{(\mathrm{open})}(G)$), is the maximum cardinality of a packing (resp.\ an open packing).  Note that vertices in packings $S$ have distance at least $3$, i.e., if $u,v\in S$, then $\mathrm{dist}_{G}(u,v)\geq 3$.  The following two lemmas are from \cite{HK7}.

\begin{lemma}
If $G$ is an $n$-vertex graph with $\delta(G) \geq k$, then $\gamma_{\times k,t}(G) \geq \lceil kn/\Delta(G) \rceil$.
\end{lemma}

\begin{proof}
The sum of the degrees of the vertices in any min-$k$TDS $D$ is at least $kn$ (since every vertex has at least $k$ neighbors in $D$) and at most $|D|\Delta(G)$ (by definition of maximum degree).  Hence $|D|\Delta(G) \geq kn$ and the lemma follows since $|D|=\gamma_{\times k,t}(G)$, by definition.
\end{proof}

\begin{lemma}\label{lm:tot-packing}
If $G$ is a graph with $\delta (G)\geq k$, then $\gamma_{\times k,t}(G) \geq k \rho^{(\mathrm{open})}(G) \geq k\rho(G)$.
\end{lemma}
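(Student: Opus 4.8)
The plan is to handle the two inequalities separately, starting with the easier one on the right. The claim $\rho^{(\mathrm{open})}(G) \geq \rho(G)$ follows from the observation that every packing is automatically an open packing: if the closed neighborhoods $\{N_G[v] : v \in S\}$ of a set $S$ are pairwise disjoint, then since $N_G(v) \subseteq N_G[v]$ for each $v$, the open neighborhoods $\{N_G(v) : v \in S\}$ are pairwise disjoint as well. Hence any packing $S$ is an open packing, and taking $S$ to be a maximum packing gives $\rho^{(\mathrm{open})}(G) \geq |S| = \rho(G)$. This requires no computation and can be dispatched in a sentence.

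For the main inequality $\gamma_{\times k,t}(G) \geq k\rho^{(\mathrm{open})}(G)$, I would argue by double counting. Let $P$ be a maximum open packing, so $|P| = \rho^{(\mathrm{open})}(G)$ and the open neighborhoods $\{N_G(v) : v \in P\}$ are pairwise disjoint. Let $D$ be a min-$k$TDS, which exists because the hypothesis $\delta(G) \geq k$ guarantees every vertex has at least $k$ neighbors (by the existence lemma). The key point is that because $D$ is a $k$TDS, every vertex $v$ (in particular every $v \in P$) satisfies $|N_G(v) \cap D| \geq k$.

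The crucial step is to exploit the disjointness of the open neighborhoods to convert these local lower bounds into a global one. Since the sets $N_G(v)$ for $v \in P$ are pairwise disjoint, so are the sets $N_G(v) \cap D$ for $v \in P$, and all of them are subsets of $D$. Therefore
\[
  \gamma_{\times k,t}(G) = |D| \;\geq\; \sum_{v \in P} |N_G(v) \cap D| \;\geq\; \sum_{v \in P} k \;=\; k\,|P| \;=\; k\,\rho^{(\mathrm{open})}(G),
\]
which gives the result after combining with the first inequality. I do not expect a serious obstacle here: the only subtlety is recognizing that the pairwise disjointness of the open neighborhoods is exactly what licenses summing the per-vertex counts without double-counting vertices of $D$, and that the $k$TDS property supplies the uniform lower bound $k$ for each term.
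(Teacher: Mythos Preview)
Your proof is correct and follows essentially the same approach as the paper: the paper's argument is the same double-count (a $k$TDS must contain $k$ vertices from each of the pairwise disjoint open neighborhoods of a maximum open packing), together with the observation that every packing is also an open packing.
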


\begin{proof}
A $k$TDS must contain $k$ vertices from each of the $\rho^{(\mathrm{open})}(G)$ disjoint open neighborhoods in any maximal open packing.  The second inequality is because every packing in a graph is also open packing.
\end{proof}

The following theorem gives an upper bound on the product of the $k$-tuple total domination numbers of two graphs in terms of the $k$-tuple total domination number of their Cartesian product.

\begin{theo}\label{th:totalprodbound}
Let $G$ and $H$ be two graphs, and suppose $\delta(H) \geq k$.  Then $$\rho(G) \gamma_{\times k,t}(H) \leq \gamma_{\times k,t}(G\Box H).$$  Hence, if $\delta(G) \geq k$ and $\gamma_{\times k,t}(G) \leq 2k\rho(G)$, then $$\gamma_{\times k,t}(G) \gamma_{\times k,t}(H) \leq 2k \gamma_{\times k,t}(G\Box H).$$
\end{theo}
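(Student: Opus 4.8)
The plan is to prove the first inequality $\rho(G)\gamma_{\times k,t}(H)\le \gamma_{\times k,t}(G\Box H)$ directly, and then obtain the ``Hence'' clause by a one-line algebraic manipulation. Fix a min-$k$TDS $D$ of $G\Box H$ and a maximum packing $P\subseteq V(G)$, so $|P|=\rho(G)$. For $u\in V(G)$ record the $H$-fiber $D_u=\{v\in V(H):(u,v)\in D\}$, and for each $v\in V(H)$ let $c_u(v)=|\{u'\in N_G(u):(u',v)\in D\}|$ count the $D$-neighbors of $(u,v)$ pointing in the $G$-direction. By the definition of the Cartesian product, the number of $D$-neighbors of $(u,v)$ is exactly $|N_H(v)\cap D_u|+c_u(v)$, which is $\ge k$ since $D$ is a $k$TDS.

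The first key step is an accounting argument based on disjointness. For $u\in P$ put $D^{(u)}=\{(w,v)\in D:w\in N_G[u]\}$. Because $P$ is a packing the closed neighborhoods $N_G[u]$ are pairwise disjoint, so the sets $D^{(u)}$ are pairwise disjoint subsets of $D$ and $|D|\ge\sum_{u\in P}|D^{(u)}|$. Splitting $N_G[u]=\{u\}\cup N_G(u)$ gives $|D^{(u)}|=|D_u|+\sum_{v\in V(H)}c_u(v)$, so the whole theorem reduces to the single claim that $|D^{(u)}|\ge\gamma_{\times k,t}(H)$ for each $u\in P$.

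To prove this claim I would construct a $k$TDS $T_u$ of $H$ with $|T_u|\le|D^{(u)}|$. Begin with $T_u=D_u$; every $v$ with $c_u(v)=0$ already satisfies $|N_H(v)\cap D_u|\ge k$, so only the ``deficient'' columns, where $0<k-|N_H(v)\cap D_u|\le c_u(v)$, require attention. For each such $v$, since $\deg_H(v)\ge\delta(H)\ge k$, the number of neighbors of $v$ lying outside $D_u$ is $\deg_H(v)-|N_H(v)\cap D_u|\ge k-|N_H(v)\cap D_u|$, so one can select exactly $k-|N_H(v)\cap D_u|$ of them and add them to $T_u$. After processing all deficient columns every vertex of $H$ has at least $k$ neighbors in $T_u$, so $T_u$ is a $k$TDS, while the number of added vertices is at most $\sum_v\bigl(k-|N_H(v)\cap D_u|\bigr)\le\sum_v c_u(v)$; hence $|T_u|\le|D_u|+\sum_v c_u(v)=|D^{(u)}|$. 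Summing $\gamma_{\times k,t}(H)\le|D^{(u)}|$ over $u\in P$ yields $\rho(G)\gamma_{\times k,t}(H)\le|D|=\gamma_{\times k,t}(G\Box H)$.

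I expect the repair step to be the only genuine obstacle: it is precisely where $\delta(H)\ge k$ is essential (to guarantee enough neighbors of a deficient $v$ outside $D_u$) and where the count of added vertices must be matched against $\sum_v c_u(v)$; one must also check that enlarging $T_u$ to fix one column never spoils another, which holds because adding vertices can only increase neighbor counts. Finally, the ``Hence'' clause follows by multiplying the first inequality by $2k$ and invoking the hypothesis $\gamma_{\times k,t}(G)\le 2k\rho(G)$, giving $\gamma_{\times k,t}(G)\gamma_{\times k,t}(H)\le 2k\rho(G)\gamma_{\times k,t}(H)\le 2k\gamma_{\times k,t}(G\Box H)$, where $\delta(G)\ge k$ is needed only so that $\gamma_{\times k,t}(G)$ is defined.
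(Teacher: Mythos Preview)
Your proposal is correct and follows essentially the same approach as the paper: take a maximum packing $P$ of $G$, note that the sets $D\cap(N_G[u]\times V(H))$ for $u\in P$ are pairwise disjoint, and for each such set build a $k$TDS of $H$ of no larger cardinality by replacing $G$-direction neighbors with $H$-direction neighbors (using $\delta(H)\ge k$ to guarantee available replacements). The paper phrases the repair step algorithmically, processing the outside vertices $x\in S_i\setminus H_i$ one at a time, whereas you phrase it via the deficit counts $k-|N_H(v)\cap D_u|\le c_u(v)$; these are the same argument in different clothing, and your derivation of the ``Hence'' clause matches the paper's as well.
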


\begin{proof}
Let $S$ be a min-$k$TDS of $G \Box H$.  Choose a maximal packing $P:=\{v_i\}_{i=1}^{\rho(G)}$ of $G$, and for each vertex $v_i$ in the packing, let $H_i$ be the subgraph  of $G \Box H$ induced by $\{v_i\} \times V(H)$.  An example is drawn in Figure~\ref{fi:packH1H2}.

\begin{figure}[htp]
\centering
\begin{tikzpicture}

\draw[dotted,fill=black!20] (2.55cm,0) ellipse (0.37 and 1.7);
\draw[dotted,fill=black!20] (5.6cm,0) ellipse (0.37 and 1.7);

\matrix[nodes={draw, thick, fill=black!10, circle},row sep=0.5cm,column sep=0.4cm] {
  & & & & & & & \node(h11){}; & &
  \node(11){}; &
  \node(12){}; &
  \node(13){}; &
  \node(14){}; &
  \node(15){}; &
  \node(16){}; \\
  
  \node(g11){}; &
  \node[star,star points=5,star point ratio=0.4](g12){}; &
  \node(g13){}; &
  \node(g14){}; &
  \node(g15){}; & 
  \node[star,star points=5,star point ratio=0.4](g16){}; & 
  \node[draw=none,fill=none] (box) {$\Box$}; & \node(h21){}; & \node[draw=none,fill=none] (box) {$=$}; &
  \node(21){}; &
  \node(22){}; &
  \node(23){}; &
  \node(24){}; &
  \node(25){}; &
  \node(26){}; \\
  
  & & & & & & & \node(h31){}; & &
  \node(31){}; &
  \node(32){}; &
  \node(33){}; &
  \node(34){}; &
  \node(35){}; &
  \node(36){}; \\
};
\draw (g11) to (g12) to (g13) to (g14) to (g15) to (g16); \draw (g11) to[bend left] (g13); \draw (g14) to[bend left] (g16);
\draw (11) to (12) to (13) to (14) to (15) to (16); \draw (11) to[bend left] (13); \draw (14) to[bend left] (16);
\draw (21) to (22) to (23) to (24) to (25) to (26); \draw (21) to[bend left] (23); \draw (24) to[bend left] (26);
\draw (31) to (32) to (33) to (34) to (35) to (36); \draw (31) to[bend left] (33); \draw (34) to[bend left] (36);
\draw (11) to (21) to (31) to[bend left] (11);  \draw (12) to (22) to (32) to[bend left] (12);  \draw (13) to (23) to (33) to[bend left] (13);  \draw (14) to (24) to (34) to[bend left] (14);  \draw (15) to (25) to (35) to[bend left] (15);  \draw (16) to (26) to (36) to[bend left] (16);
\draw (h11) to (h21) to (h31) to[bend left] (h11);

\node (h1) at ($(32) + (0,-0.9)$) {$H_1$};
\node (h1) at ($(36) + (0,-0.9)$) {$H_2$};
\end{tikzpicture}
\caption{A Cartesian product graph $G \Box H$.  The vertices of $G$ drawn as stars highlight a maximum packing $P$ of $G$, and are used to identify $H_1$ and $H_2$ in $G \Box H$.}\label{fi:packH1H2}
\end{figure}
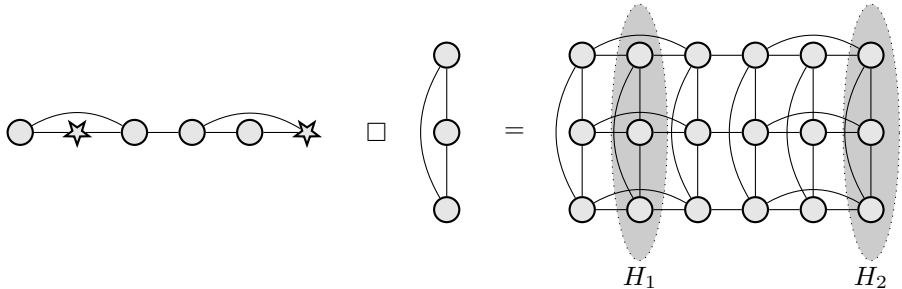

We partition $S$ into parts (a) $S_i$, for $i \in \{1,2,\ldots,\rho(G)\}$, containing the vertices of $S$ which are in or are adjacent to vertices in $H_i$, and (b) $X$, the remaining vertices (if any).  (The sets $S_i$ are disjoint, since $P$ is a packing of $G$.)  Hence
\begin{equation}\label{eq:Sbound1}
|S| \geq |S \setminus X | = \left| \bigcup_{i=1}^{\rho(G)} S_i \right|=\sum_{i=1}^{\rho(G)} |S_i|
\end{equation}
since the sets $S_i$ are disjoint.  Moreover, every vertex in $H_i$ has at least $k$ neighbors in $S_i$.

From $S_i$, we can construct a $k$TDS $D$ of $H_i$ of size at most $|S_i|$ as follows:
\begin{itemize}
 \item Add every vertex in $S_i \cap H_i$ to $D$.
 \item For each $x \in S_i \setminus H_i$, by definition of Cartesian product, $x$ has a unique neighbor in $H_i$; call it $x'$.
 \begin{itemize}
  \item If $x'$ has $k$ or more neighbors in $D$, do nothing.
  \item Otherwise, since $\delta(H) \geq k$, we know $x'$ has a neighbor $x''$ in $H_i \setminus D$.  Add $x''$ to $D$.
 \end{itemize}
\end{itemize}
Essentially, any $x \in S_i \setminus H_i$ dominates a unique vertex $x' \in H_i$ so, if necessary, we replace it by some unused $x'' \in H_i \cap N(x')$ which also dominates $x'$.  After performing these operations, $|S_i| \geq |D| \geq \gamma_{\times k,t}(H)$.  Thus
\begin{align*}
\gamma_{\times k,t}(G \Box H) & = |S| \\
& \geq\sum_{i=1}^{\rho(G)} |S_{i}| & [\text{by \eqref{eq:Sbound1}}] \\
& \geq \sum_{i=1}^{\rho(G)} \gamma_{\times k,t}(H) & [\text{by the above}] \\
& = \rho(G) \gamma_{\times k,t}(H). & \qedhere
\end{align*}
\end{proof}

The second part of Theorem~\ref{th:totalprodbound} is applicable when $\gamma_{\times k,t}(G) \leq 2k\rho(G)$; in contrast Lemma~\ref{lm:tot-packing} implies that $\gamma_{\times k,t}(G) \geq k\rho(G)$ holds when $\delta(G) \geq k$.

When $k=1$, i.e., total domination, Theorem~\ref{th:totalprodbound} gives the bound \eqref{eq:HRbound} when $\gamma_t(G) \leq 2\rho(G)$.  Equality holds in Theorem~\ref{th:totalprodbound} when $k=1$ in some instances:  Modifying a construction in \cite{HR9}, we take a graph $G=(V,E)$ and (a) add at least one pendant vertex to each vertex in $V$, then (b) subdivide each edge in $E$ twice.  Call the result $G^*$.  Then $V$ is both a maximum packing and a minimum dominating set of $G^*$.  So $\rho(G^*)=\gamma(G^*)=|V|=n$ and, in fact, we also find $\gamma_t (G^*)=2n$.  Figure~\ref{fi:HR9exam} illustrates an example of this construction.  Further, since $V(G) \times V(K_2)$ is a $(2n)$-vertex total dominating set of $G^* \Box K_2$, we have $$\gamma_t(G^*) \gamma_t(K_2) = 2 \gamma_t(G^* \Box K_2).$$

To further illustrate, the Petersen graph $\mathcal{P}$ has packing number $\rho(\mathcal{P})=1$ and we compute:
\begin{align*}
k=1: \qquad & \gamma_{\times 1,t}(\mathcal{P})=4>2k\rho(\mathcal{P}), \\
k=2: \qquad & \gamma_{\times 2,t}(\mathcal{P})=8>2k\rho(\mathcal{P}), \\
k=3: \qquad & \gamma_{\times 3,t}(\mathcal{P})=10>2k\rho(\mathcal{P}),
\end{align*}
so $\gamma_{\times k,t}(\mathcal{P}) \leq 2k\rho(\mathcal{P})$ is not satisfied in all three cases.  However, we can still apply the second part of Theorem~\ref{th:totalprodbound} when $H$ is the Petersen graph and $G$ is some other graph which satisfies $\delta(G) \geq k$ and $\gamma_{\times k,t}(G) \leq 2k\rho(G)$.

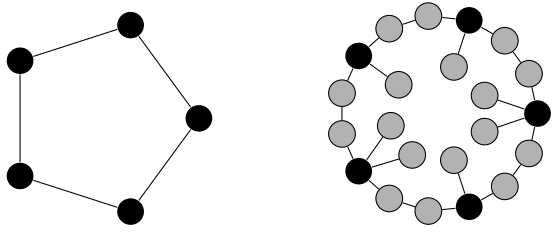
\begin{figure}[htp]
\centering
\begin{tikzpicture}[scale=1.3]
\tikzstyle{vertex}=[circle,fill=black,minimum size=10pt,inner sep=0pt]

\node[vertex] (v1) at (0*360/5:1) {};
\node[vertex] (v2) at (1*360/5:1) {};
\node[vertex] (v3) at (2*360/5:1) {};
\node[vertex] (v4) at (3*360/5:1) {};
\node[vertex] (v5) at (4*360/5:1) {};

\draw (v1) -- (v2) -- (v3) -- (v4) -- (v5) -- (v1);
\end{tikzpicture}
\qquad\qquad
\begin{tikzpicture}[scale=1.3]
\tikzstyle{vertex}=[circle,fill=black,minimum size=10pt,inner sep=0pt]
\tikzstyle{vertex2}=[circle,draw,fill=black!30,minimum size=10pt,inner sep=0pt]

\node[vertex] (v1) at (0*360/5:1) {};
\node[vertex2] (v12a) at (0.33333*360/5:1) {};
\node[vertex2] (v12b) at (0.66666*360/5:1) {};
\node[vertex] (v2) at (1*360/5:1) {};
\node[vertex2] (v23a) at (1.33333*360/5:1) {};
\node[vertex2] (v23b) at (1.66666*360/5:1) {};
\node[vertex] (v3) at (2*360/5:1) {};
\node[vertex2] (v34a) at (2.33333*360/5:1) {};
\node[vertex2] (v34b) at (2.66666*360/5:1) {};
\node[vertex] (v4) at (3*360/5:1) {};
\node[vertex2] (v45a) at (3.33333*360/5:1) {};
\node[vertex2] (v45b) at (3.66666*360/5:1) {};
\node[vertex] (v5) at (4*360/5:1) {};
\node[vertex2] (v15a) at (4.33333*360/5:1) {};
\node[vertex2] (v15b) at (4.66666*360/5:1) {};

\node[vertex2] (u1a) at (-0.3*360/5:0.5) {};
\node[vertex2] (u1b) at (0.3*360/5:0.5) {};
\node[vertex2] (u2) at (1*360/5:0.5) {};
\node[vertex2] (u3) at (2*360/5:0.5) {};
\node[vertex2] (u4a) at (2.7*360/5:0.5) {};
\node[vertex2] (u4b) at (3.3*360/5:0.5) {};
\node[vertex2] (u5) at (4*360/5:0.5) {};

\draw (v1) -- (v12a) -- (v12b) -- (v2) -- (v23a) -- (v23b) -- (v3) -- (v34a) -- (v34b) -- (v4) -- (v45a) -- (v45b) -- (v5) -- (v15a) -- (v15b) -- (v1);
\draw (v1) -- (u1a);
\draw (v1) -- (u1b);
\draw (v2) -- (u2);
\draw (v3) -- (u3);
\draw (v4) -- (u4a);
\draw (v4) -- (u4b);
\draw (v5) -- (u5);
\end{tikzpicture}
\caption{A construction of a graph $G^*$ (right) with $\rho(G^*)=\gamma(G^*)=5$ and $\gamma_t(G^*)=10$ from $G=C_5$ (left).  The graph $G^* \Box K_2$ has $\gamma_t(G^* \Box K_2)=10$ and is an instance of equality in Theorem~\ref{th:totalprodbound}.}\label{fi:HR9exam}
\end{figure}

We now derive lower bounds on $\gamma_{\times k,t}(G \Box H)$ (Theorems~\ref{totrhoprodbound} and~\ref{th:Cartrhoopensum}) in terms of the packing and open packing numbers of the graphs $G$ and $H$.

\begin{lemma}\label{rhoprodbound}
For graphs $G$ and $H$, $$\rho(G \Box H) \geq \rho(G)\rho(H).$$
\end{lemma}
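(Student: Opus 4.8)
The plan is to exhibit an explicit packing of $G \Box H$ of the required size, by taking the product of maximum packings in the two factors. Concretely, fix a maximum packing $A$ of $G$, so $|A|=\rho(G)$, and a maximum packing $B$ of $H$, so $|B|=\rho(H)$, and consider the set $A \times B = \{(a,b) : a \in A,\ b \in B\} \subseteq V(G \Box H)$. Since $|A \times B| = \rho(G)\rho(H)$, it suffices to prove that $A \times B$ is a packing of $G \Box H$; the bound $\rho(G \Box H) \geq \rho(G)\rho(H)$ then follows immediately from the definition of the packing number.

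To verify the packing condition I would use the characterization (already noted in the text) that a set is a packing precisely when its members are pairwise at distance at least $3$, together with the standard fact that distance is additive across a Cartesian product,
$$\mathrm{dist}_{G \Box H}\bigl((u_1,v_1),(u_2,v_2)\bigr) = \mathrm{dist}_G(u_1,u_2) + \mathrm{dist}_H(v_1,v_2)$$
(see \cite{IK}). Let $(a_1,b_1)$ and $(a_2,b_2)$ be two distinct vertices of $A \times B$, so that $a_1 \neq a_2$ or $b_1 \neq b_2$.

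A short case split then finishes the argument. If exactly one coordinate differs, say $a_1 \neq a_2$ while $b_1 = b_2$, then $\mathrm{dist}_G(a_1,a_2) \geq 3$ because $A$ is a packing, so the total distance is at least $3$; the symmetric case $a_1=a_2$, $b_1 \neq b_2$ is identical. If both coordinates differ, then $\mathrm{dist}_G(a_1,a_2) \geq 3$ and $\mathrm{dist}_H(b_1,b_2) \geq 3$, giving total distance at least $6 \geq 3$. In every case the two vertices lie at distance at least $3$ in $G \Box H$, so $A \times B$ is a packing and the lemma follows.

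I do not expect a genuine obstacle: the whole argument rests on the additivity of distance in Cartesian products, after which the packing condition is immediate from the case analysis. If a self-contained proof avoiding the distance formula is preferred, the same conclusion follows by computing closed neighborhoods directly, using $N_{G \Box H}[(u,v)] = (\{u\} \times N_H[v]) \cup (N_G[u] \times \{v\})$ and checking that these sets are disjoint for distinct members of $A \times B$; this requires slightly more bookkeeping but is entirely routine.
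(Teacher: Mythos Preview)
Your proof is correct and uses the same construction as the paper: both take the product $P_G \times P_H$ of maximum packings and verify it is a packing of $G \Box H$. The only difference is in the verification step: you invoke the distance-additivity formula for Cartesian products and the distance-$\geq 3$ characterization of packings, whereas the paper checks directly, via a short case analysis, that no two vertices of $P_G \times P_H$ are adjacent or share a common neighbor (precisely the closed-neighborhood computation you mention as an alternative at the end).
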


\begin{proof}
Let $P_G$ and $P_H$ be maximum packings in $G$ and $H$, respectively.  It is sufficient to show that $P_G \times P_H$, which has size $\rho(G)\rho(H)$, is a packing in $G \Box H$.

If two vertices $(u,v),(x,y) \in P_G \times P_H$ are adjacent, then, by definition of a Cartesian product, either (a) $u=x$, in which case $v,y \in P_H$ are adjacent in $H$, contradicting the assumption that $P_H$ is a packing of $H$, or (b) $v=y$, in which case $u,x \in P_G$ are adjacent in $G$, contradicting the assumption that $P_G$ is a packing of $G$.

If two distinct vertices $(u,v),(x,y) \in P_G \times P_H$ have a common neighbor, $(a,b)$ say, in $G \Box H$, then by definition $$(a,b) \in \big( \overbrace{\{u\} \times N_H[v] \cup N_G[u] \times \{v\}}^{\text{closed neighborhood of $(u,v)$}} \big) \medcap \big( \overbrace{\{x\} \times N_H[y] \cup N_G[x] \times \{y\}}^{\text{closed neighborhood of $(x,y)$}} \big).$$  Four cases arise, and in each case, we contradict the assumption that $P_G$ and $P_H$ are maximum packings, tabulated below:

\begin{center}
\begin{tabular}{|c|c|c|}
\cline{2-3}
\multicolumn{1}{c|}{} & $a=u$ and $b \in N_H[v]$ & $a \in N_H[u]$ and $b=v$ \\
\hline
$a=x$ and $b \in N_H[y]$ & $\mathrm{dist}_H(v,y)=2$ & \parbox{1.8in}{\centering $\mathrm{dist}_G(u,x)=1$ \\ or \\ $u=x$ and $\mathrm{dist}_H(v,y)=1$} \\
\hline
$a \in N_G[x]$ and $b=y$ & \parbox{1.8in}{\centering $\mathrm{dist}_G(u,x)=1$ \\ or \\ $u=x$ and $\mathrm{dist}_H(v,y)=1$} & $\mathrm{dist}_G(u,x)=2$ \\
\hline
\end{tabular}.
\end{center}
\end{proof}

The following theorem follows from Lemmas~\ref{lm:tot-packing} and~\ref{rhoprodbound}.

\begin{theo}\label{totrhoprodbound}
If $G$ and $H$ are two graphs with $\delta(G)+\delta(H) \geq k$, then $$\gamma_{\times k,t}(G \Box H) \geq k \rho(G) \rho(H).$$
\end{theo}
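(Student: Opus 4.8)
The plan is to apply Lemma~\ref{lm:tot-packing} directly to the product graph $G \Box H$ and then bound its packing number from below using Lemma~\ref{rhoprodbound}. The only thing that needs checking is that the hypothesis of Lemma~\ref{lm:tot-packing} is met by $G \Box H$, and this is exactly where the condition $\delta(G)+\delta(H) \geq k$ enters.

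First I would record the elementary fact that in a Cartesian product the degree of a vertex $(u,v)$ is $\mathrm{deg}_G(u)+\mathrm{deg}_H(v)$, which follows immediately from the definition of $G \Box H$ (the neighbors of $(u,v)$ are those obtained by moving in the $G$-coordinate or the $H$-coordinate, but not both). Minimizing over all vertices then gives $\delta(G \Box H)=\delta(G)+\delta(H)$. Consequently the assumed inequality $\delta(G)+\delta(H) \geq k$ is precisely the statement that $\delta(G \Box H) \geq k$, so the product graph satisfies the minimum-degree hypothesis required by Lemma~\ref{lm:tot-packing}.

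With that in hand the argument is a two-step chain. Applying Lemma~\ref{lm:tot-packing} to the single graph $G \Box H$ yields $\gamma_{\times k,t}(G \Box H) \geq k\rho(G \Box H)$, and then Lemma~\ref{rhoprodbound} gives $\rho(G \Box H) \geq \rho(G)\rho(H)$; substituting the second bound into the first produces the claimed inequality $\gamma_{\times k,t}(G \Box H) \geq k\rho(G)\rho(H)$.

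There is no real obstacle here beyond correctly lining up the hypotheses: the subtle point worth emphasizing is that $G$ and $H$ need \emph{not} individually satisfy $\delta(G) \geq k$ or $\delta(H) \geq k$, only that their minimum degrees \emph{sum} to at least $k$. This is what makes the theorem slightly more general than a naive application might suggest, and it works precisely because the minimum degree is additive under the Cartesian product.
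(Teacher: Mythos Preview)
Your proof is correct and follows exactly the route the paper takes: the paper simply states that the theorem follows from Lemmas~\ref{lm:tot-packing} and~\ref{rhoprodbound}, and you have supplied the one missing detail (that $\delta(G\Box H)=\delta(G)+\delta(H)$, so the hypothesis of Lemma~\ref{lm:tot-packing} is met).
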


We can also bound the open packing number of Cartesian product graphs, as in the following lemma.

\begin{lemma}\label{lm:rhoopenprodbound}
For graphs $G$ and $H$, where $G$ is not the union of disjoint $K_2$ subgraphs, $$\rho^{(\mathrm{open})}(G \Box H) \geq \rho^{(\mathrm{open})}(G)+\rho^{(\mathrm{open})}(H)-1.$$
\end{lemma}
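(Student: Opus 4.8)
The plan is to construct a large open packing of $G \Box H$ directly out of maximum open packings of the two factors. Let $P_G$ and $P_H$ be maximum open packings of $G$ and $H$, so $|P_G| = \rho^{(\mathrm{open})}(G)$ and $|P_H| = \rho^{(\mathrm{open})}(H)$. Fixing base vertices $u_0 \in P_G$ and $v_0 \in P_H$, I would take the ``cross''
$$S := \bigl(P_G \times \{v_0\}\bigr) \cup \bigl(\{u_0\} \times P_H\bigr).$$
The two pieces meet only in $(u_0,v_0)$, so $|S| = \rho^{(\mathrm{open})}(G) + \rho^{(\mathrm{open})}(H) - 1$, exactly the target. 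It therefore suffices to choose the base vertices (and, if needed, the packings themselves) so that $S$ is an open packing, i.e.\ so that every vertex of $G \Box H$ has at most one neighbour in $S$.

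The verification is a case analysis on an arbitrary vertex $(a,b)$. Its neighbours split into \emph{horizontal} ones $(a',b)$ with $a' \in N_G(a)$ and \emph{vertical} ones $(a,b')$ with $b' \in N_H(b)$. A horizontal neighbour lies in $S$ only when $b=v_0$ and $a' \in P_G$, or $b \in P_H$ and $a'=u_0$; since $P_G$ is an open packing we have $|N_G(a) \cap P_G| \le 1$, and one checks the number of horizontal neighbours of $(a,b)$ in $S$ is at most one. The symmetric argument with $P_H$ bounds the vertical ones by one, so $(a,b)$ can only acquire two neighbours in $S$ by having one of each type, which forces $a \in P_G$ and $b \in P_H$. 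Here the easy subcases vanish quickly: if $a=u_0$ but $b \ne v_0$, the only horizontal candidate $a'=u_0$ would require the loop $u_0 \in N_G(u_0)$, so there is no horizontal neighbour, and symmetrically for $b=v_0$, $a \ne u_0$. The only genuinely dangerous configurations are the \emph{centre} $(u_0,v_0)$, where the counts are $|N_G(u_0) \cap P_G|$ and $|N_H(v_0) \cap P_H|$, and the \emph{diagonal} vertices $(a,b)$ with $a \in P_G \cap N_G(u_0)$ and $b \in P_H \cap N_H(v_0)$, each of which is adjacent to both $(u_0,b) \in S$ and $(a,v_0) \in S$.

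Both dangerous configurations collapse as soon as the base vertex $u_0$ has \emph{no} neighbour inside $P_G$, that is $N_G(u_0) \cap P_G = \emptyset$: then the centre has horizontal count $0$, and no diagonal vertex exists at all. (By the symmetry of the construction it would equally suffice to find such a vertex in $P_H$.) Thus the whole argument reduces to the combinatorial claim that $G$ admits a maximum open packing containing a vertex $u_0$ with $N_G(u_0) \cap P_G = \emptyset$. Since every vertex has at most one neighbour in an open packing, the subgraph induced by $P_G$ has maximum degree one, so it is a disjoint union of edges and isolated vertices; the required $u_0$ exists exactly when this matching is not perfect. I expect this existence question to be the crux and the main obstacle, and it is precisely where the hypothesis that $G$ is not a disjoint union of $K_2$ subgraphs must be used: one must rule out the degenerate situation in which every maximum open packing of $G$ is perfectly matched and hence offers no isolated base vertex, the extremal instance being $mK_2$, whose unique maximum open packing is itself a perfect matching. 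When every maximum open packing of $G$ is perfectly matched even though $G \ne mK_2$, the cross alone is insufficient and one must instead exploit the extra room in $H$ via a more careful placement, which I would treat as a separate case.
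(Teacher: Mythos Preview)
Your reduction to finding a base vertex $u_0 \in P_G$ with $N_G(u_0) \cap P_G = \emptyset$ is correctly analysed, and you rightly flag it as the crux. The gap is that you do not resolve it: the ``separate case'' where every maximum open packing of $G$ is perfectly matched is left untreated, with only a vague promise to ``exploit the extra room in $H$''. This case is not hypothetical. Take $G = C_4$: its maximum open packings are exactly the four edges, so $\rho^{(\mathrm{open})}(C_4) = 2$ and every $P_G$ consists of two adjacent vertices, each having the other as its neighbour in $P_G$. Your cross then always fails, and there is no room in $H$ to exploit, since the hypothesis constrains only $G$ --- in particular $H = K_2$ is allowed, and there $P_H$ is perfectly matched as well.

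The paper tries to sidestep this by placing the pivot $s$ \emph{outside} $O_G$ rather than inside, putting the vertical fibre over $s$ and the horizontal fibre over some $t \in O_H$, and deleting the (at most one) $s' \in O_G$ adjacent to $s$. But its case analysis has the same blind spot you ran into: the vertex $(s,t)$ belongs to the vertical arm, yet the ``$v = y = t$'' subcase tacitly assumes both first coordinates lie in $O_G$. Concretely, with $G = C_4$ on vertices $1,2,3,4$, $O_G = \{1,2\}$, $s = 3$, $s' = 2$, the set $T$ contains both $(3,t)$ and $(1,t)$, which share the common neighbour $(2,t)$. In fact the lemma as stated is false: for $G = C_4$ and $H = K_2$ one has $\rho^{(\mathrm{open})}(C_4) = \rho^{(\mathrm{open})}(K_2) = 2$, while $C_4 \Box K_2 \cong Q_3$ is $3$-regular on $8$ vertices, so disjointness of open neighbourhoods forces $\rho^{(\mathrm{open})}(Q_3) \le \lfloor 8/3 \rfloor = 2 < 3$. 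Your difficulty in closing the argument was therefore not a lapse of technique --- the obstruction is real.
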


\begin{proof}
Let $O_G$ and $O_H$ be maximal open packings in $G$ and $H$.  Choose $s \in V(G) \setminus O_G$ and $t \in O_H$.  (Note that $s$ exists because $G$ is not the union of disjoint $K_2$ subgraphs.)  An example is drawn in Figure~\ref{fi:Cartopenprodexamp}.

If $s$ is adjacent to a vertex $s' \in O_G$ in $G$, then it is adjacent to exactly one vertex in $O_G$ (since $O_G$ is an open packing).  If $s'$ exists, we define $$T=\big( \{s\} \times O_H \big) \cup \big( (O_G \setminus \{s'\}) \times \{t\} \big),$$ otherwise, we define $$T=\big( \{s\} \times O_H \big) \cup \big( O_G \times \{t\} \big).$$  Either way, $|T| \geq \rho^{(\mathrm{open})}(G)+\rho^{(\mathrm{open})}(H)-1$, so it is sufficient to show that $T$ is an open packing of $G \Box H$.  Assume, seeking a contradiction, that two distinct vertices $(u,v),(x,y) \in T$ have a common neighbor.

\begin{figure}[htp]
\centering
\begin{tikzpicture}
\matrix[nodes={draw, thick, fill=black!10, circle},row sep=0.7cm,column sep=0.4cm] {

  \node(11){}; &
  \node[star,star points=5,star point ratio=0.4](12){}; &
  \node[regular polygon,regular polygon sides=5](13){}; &
  \node(14){}; &
  \node(15){}; &
  \node[star,star points=5,star point ratio=0.4](16){}; \\
  
  \node(21){}; &
  \node(22){}; &
  \node(23){}; &
  \node(24){}; &
  \node(25){}; &
  \node(26){}; \\
  
  \node(31){}; &
  \node(32){}; &
  \node(33){}; &
  \node(34){}; &
  \node(35){}; &
  \node(36){}; \\
};

\node[draw,cross out,minimum size=11pt] at (12) {};

\draw (11) to (12) to (13) to (14) to (15) to (16); \draw (11) to[bend left] (13); \draw (14) to[bend left] (16);
\draw (21) to (22) to (23) to (24) to (25) to (26); \draw (21) to[bend left] (23); \draw (24) to[bend left] (26);
\draw (31) to (32) to (33) to (34) to (35) to (36); \draw (31) to[bend left] (33); \draw (34) to[bend left] (36);
\draw (11) to (21) to (31) to[bend left] (11);  \draw (12) to (22) to (32) to[bend left] (12);  \draw (13) to (23) to (33) to[bend left] (13);  \draw (14) to (24) to (34) to[bend left] (14);  \draw (15) to (25) to (35) to[bend left] (15);  \draw (16) to (26) to (36) to[bend left] (16);

\node at (-3.5,1.04) {$t \in V(H)  \longrightarrow$};
\node at (-0.38,1.9) {\parbox{1in}{\centering $s \in V(G) \setminus O_G$ \\ $\downarrow$}};

\end{tikzpicture}
\caption{A Cartesian product graph $G \Box H$ as in Figure~\ref{fi:packH1H2}.  The stars identify a maximum packing $O_G$ in $G$ (drawn horizontally) and the pentagon identifies a maximum packing $O_H$ in $H$ (drawn vertically).  After deleting $(s',t)$ (crossed out), we obtain an open packing of $G \Box H$.}\label{fi:Cartopenprodexamp}
\end{figure}
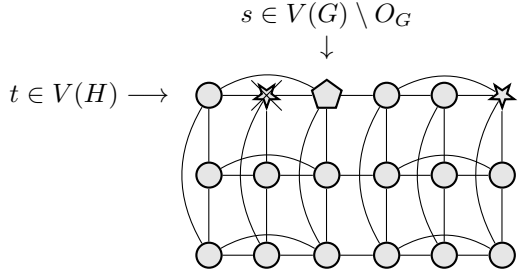

As elements of $T$, the vertices $(u,v)$ and $(x,y)$ respectively satisfy (a) either $u=s$ or $v=t$, and (b) either $x=s$ or $y=t$.  If $u=x=s$, then $v,y \in O_H$ have a common neighbor in $H$, contradicting that $O_H$ is an open packing.  A symmetric contradiction arises if $v=y=t$.  Thus, by symmetry, we can assume $u=s$ and $y=t$.

By definition of the Cartesian product, if $(s,v)$ and $(x,t)$ have a common neighbor in $G \Box H$, it is either $(s,t)$ or $(x,v)$ (or both).  Either way, we can deduce that $x$ and $s$ are adjacent in $G$.  But, since $(x,t) \in T$, we know that $x \in O_G$ or $x \in O_G \setminus \{s'\}$ (if $s'$ exists).  Either way, this is a contradiction.
\end{proof}

The following theorem follows from Lemmas~\ref{lm:tot-packing} and~\ref{lm:rhoopenprodbound}.

\begin{theo} \label{th:Cartrhoopensum}
For graphs $G$ and $H$ with $\delta(G)+\delta(H) \geq k$, where $G$ is not the union of disjoint $K_2$ subgraphs, $$\gamma_{\times k,t}(G \Box H) \geq k(\rho^{(\mathrm{open})}(G) + \rho^{(\mathrm{open})}(H) - 1).$$
\end{theo}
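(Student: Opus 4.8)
The plan is to chain together the two cited lemmas applied to the product graph $G \Box H$ itself, so the only genuine work is to check that the hypotheses of Lemma~\ref{lm:tot-packing} are satisfied for $G \Box H$. The substantive content already lives in Lemmas~\ref{lm:tot-packing} and~\ref{lm:rhoopenprodbound}; Theorem~\ref{th:Cartrhoopensum} is their composition.

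First I would record the standard degree identity for Cartesian products. By definition, the neighbors of a vertex $(u,v) \in V(G \Box H)$ are exactly the vertices $(u',v)$ with $u'u \in E(G)$ together with the vertices $(u,v')$ with $v'v \in E(H)$, and these two families are disjoint. Hence $\mathrm{deg}_{G \Box H}(u,v) = \mathrm{deg}_G(u) + \mathrm{deg}_H(v)$, and minimizing over all $u$ and $v$ gives $\delta(G \Box H) = \delta(G) + \delta(H)$. Consequently the theorem's hypothesis $\delta(G)+\delta(H) \geq k$ is precisely the statement $\delta(G \Box H) \geq k$, which is exactly what Lemma~\ref{lm:tot-packing} requires of the single graph $G \Box H$. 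With that in hand, Lemma~\ref{lm:tot-packing} gives $\gamma_{\times k,t}(G \Box H) \geq k\,\rho^{(\mathrm{open})}(G \Box H)$, and Lemma~\ref{lm:rhoopenprodbound} — whose hypothesis that $G$ is not a disjoint union of $K_2$ subgraphs is carried over directly from the theorem — gives $\rho^{(\mathrm{open})}(G \Box H) \geq \rho^{(\mathrm{open})}(G) + \rho^{(\mathrm{open})}(H) - 1$. Substituting the second bound into the first yields the claimed inequality.

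I expect no real obstacle, as the hard arguments are packaged inside the two lemmas. The one point deserving a moment's care is the degree identity: it is essential that the minimum degree of the product is \emph{additive}, $\delta(G \Box H) = \delta(G)+\delta(H)$, rather than the minimum of the two individual minimum degrees. It is exactly this additivity that lets the single combined hypothesis $\delta(G)+\delta(H) \geq k$ license the application of Lemma~\ref{lm:tot-packing} to $G \Box H$, even in cases where neither $\delta(G) \geq k$ nor $\delta(H) \geq k$ holds on its own.
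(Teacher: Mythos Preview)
Your proposal is correct and takes essentially the same approach as the paper, which simply states that the theorem follows from Lemmas~\ref{lm:tot-packing} and~\ref{lm:rhoopenprodbound}. Your explicit verification that $\delta(G \Box H) = \delta(G) + \delta(H)$, so that the hypothesis of Lemma~\ref{lm:tot-packing} is met by $G \Box H$, makes explicit the one detail the paper leaves to the reader.
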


We also include the following, simple lower bound on $\gamma_{\times k,t}(G \Box H)$.

\begin{theo}
For graphs $G$ and $H$ with $\delta(G) \geq k$, $$\gamma_{\times k,t}(G \Box H) \leq \gamma_{\times k,t}(G)\, |V(H)|.$$
\end{theo}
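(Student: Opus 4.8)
The plan is to exhibit an explicit $k$TDS of $G \Box H$ of the required size by ``blowing up'' a minimum $k$TDS of the first factor across all of $H$. Since $\delta(G) \geq k$, the earlier lemma guarantees that $G$ admits a $k$TDS at all, so $\gamma_{\times k,t}(G)$ is well defined; fix a min-$k$TDS $D$ of $G$, so that $|D| = \gamma_{\times k,t}(G)$. I would then set
$$ S := D \times V(H) \subseteq V(G \Box H), $$
which has cardinality exactly $|D|\,|V(H)| = \gamma_{\times k,t}(G)\,|V(H)|$. The whole theorem reduces to showing that $S$ is a $k$TDS of $G \Box H$, for then $\gamma_{\times k,t}(G \Box H) \leq |S|$ by definition.

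To verify the $k$TDS condition, I would take an arbitrary vertex $(u,v) \in V(G \Box H)$ and examine its neighbors. By definition of the Cartesian product, the neighbors of $(u,v)$ are precisely the vertices $(u',v)$ with $u' \in N_G(u)$ together with the vertices $(u,v')$ with $v' \in N_H(v)$. I would focus only on the first (``$G$-direction'') type: a vertex $(u',v)$ lies in $S = D \times V(H)$ if and only if $u' \in D$. Hence the number of neighbors of $(u,v)$ that belong to $S$ is at least $|N_G(u) \cap D|$, and since $D$ is a $k$TDS of $G$ we have $|N_G(u) \cap D| \geq k$. Therefore every vertex of $G \Box H$ has at least $k$ neighbors in $S$, so $S$ is indeed a $k$TDS.

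There is essentially no hard step here: the argument never needs the $H$-direction neighbors and never uses any hypothesis on $H$, precisely because copying $D$ across every fiber $\{u\}\times V(H)$ already supplies $k$ dominators to each vertex within its own row. The only subtlety worth flagging is the role of the hypothesis $\delta(G) \geq k$, which is exactly what ensures that a min-$k$TDS $D$ exists so that the construction can begin; no lower bound on $\delta(H)$ is required. Consequently the proof is short, and I expect the write-up to consist almost entirely of the neighbor count above.
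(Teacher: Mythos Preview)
Your proposal is correct and matches the paper's approach exactly: the paper simply observes that a $k$TDS $D$ of $G$ gives rise to the $k$TDS $\{(d,h):d\in D,\ h\in V(H)\}$ of $G\Box H$, which is precisely your set $S=D\times V(H)$. Your write-up is in fact more detailed than the paper's one-line proof, since you spell out the neighbor count in the $G$-direction and note why no hypothesis on $H$ is needed.
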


\begin{proof}
A $k$TDS $D$ of $G$ gives rise to the $k$-TDS $\{(d,h):d \in D \text{ and } h \in V(H)\}$ of $G \Box H$.  An example is drawn in Figure~\ref{fi:proofexample3}.
\end{proof}

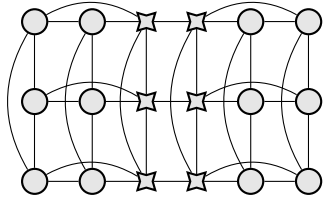
\begin{figure}[htp]
\centering
\begin{tikzpicture}
\matrix[nodes={draw, thick, fill=black!10, circle},row sep=0.7cm,column sep=0.4cm] {

  \node(11){}; &
  \node(12){}; &
  \node[star,star points=4,star point ratio=0.5](13){}; &
  \node[star,star points=4,star point ratio=0.5](14){}; &
  \node(15){}; &
  \node(16){}; \\
  
  \node(21){}; &
  \node(22){}; &
  \node[star,star points=4,star point ratio=0.5](23){}; &
  \node[star,star points=4,star point ratio=0.5](24){}; &
  \node(25){}; &
  \node(26){}; \\
  
  \node(31){}; &
  \node(32){}; &
  \node[star,star points=4,star point ratio=0.5](33){}; &
  \node[star,star points=4,star point ratio=0.5](34){}; &
  \node(35){}; &
  \node(36){}; \\
};

\draw (11) to (12) to (13) to (14) to (15) to (16); \draw (11) to[bend left] (13); \draw (14) to[bend left] (16);
\draw (21) to (22) to (23) to (24) to (25) to (26); \draw (21) to[bend left] (23); \draw (24) to[bend left] (26);
\draw (31) to (32) to (33) to (34) to (35) to (36); \draw (31) to[bend left] (33); \draw (34) to[bend left] (36);
\draw (11) to (21) to (31) to[bend left] (11);  \draw (12) to (22) to (32) to[bend left] (12);  \draw (13) to (23) to (33) to[bend left] (13);  \draw (14) to (24) to (34) to[bend left] (14);  \draw (15) to (25) to (35) to[bend left] (15);  \draw (16) to (26) to (36) to[bend left] (16);
\end{tikzpicture}
\caption{A Cartesian product graph $G \Box H$ as in Figure~\ref{fi:packH1H2}.  The stars mark a $1$TDS in each copy of $G$ (drawn horizontally), together forming a $1$TDS of $G \Box H$.}\label{fi:proofexample3}
\end{figure}

The following theorem establishes $K_n \Box K_m$, i.e., the rook's graph, as an extremal case, motivating the study of this class of graphs in the next section.

\begin{theo}\label{th:rookbound}
If $G$ and $H$ are spanning subgraphs of $G'$ and $H'$, respectively, and $\delta(G)+\delta(H) \geq k$, then $$\gamma_{\times k,t}(G \Box H) \geq \gamma_{\times k,t}(G' \Box H').$$  In particular, $$\gamma_{\times k,t}(G \Box H) \geq \gamma_{\times k,t}(K_n \Box K_m)$$ if $G$ has $n$ vertices and $H$ has $m$ vertices.
\end{theo}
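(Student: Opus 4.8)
The plan is to reduce everything to a single monotonicity principle: $\gamma_{\times k,t}$ does not increase when edges are added to a graph. The first step is to notice that $G\Box H$ is a spanning subgraph of $G'\Box H'$. Indeed, $V(G)=V(G')$ and $V(H)=V(H')$ give the common vertex set $V(G)\times V(H)$, and by the definition of the Cartesian product every edge of $G\Box H$ (arising from an edge of $G$ or of $H$) is also an edge of $G'\Box H'$ because $E(G)\subseteq E(G')$ and $E(H)\subseteq E(H')$. It therefore suffices to prove the general statement that if $A$ is a spanning subgraph of $B$ and both admit a $k$TDS, then $\gamma_{\times k,t}(A)\geq\gamma_{\times k,t}(B)$.

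Before that, I would check well-definedness. The degree of $(u,v)$ in a Cartesian product is $\deg(u)+\deg(v)$, so $\delta(G\Box H)=\delta(G)+\delta(H)\geq k$ by hypothesis; since adding edges can only raise degrees, $\delta(G'\Box H')\geq\delta(G\Box H)\geq k$ as well. By the first lemma each product then has a $k$TDS (its full vertex set is one), so both domination numbers are finite. For the monotonicity step itself, I would take a min-$k$TDS $S$ of $G\Box H$ and argue that the same set $S$ is a $k$TDS of $G'\Box H'$: for every vertex $w$, the inclusion $N_{G\Box H}(w)\subseteq N_{G'\Box H'}(w)$ forces $|N_{G'\Box H'}(w)\cap S|\geq|N_{G\Box H}(w)\cap S|\geq k$. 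Hence $\gamma_{\times k,t}(G'\Box H')\leq|S|=\gamma_{\times k,t}(G\Box H)$, which is the desired inequality.

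The ``in particular'' clause then follows by taking $G'=K_n$ and $H'=K_m$, since any $n$-vertex (resp.\ $m$-vertex) graph is a spanning subgraph of $K_n$ (resp.\ $K_m$). I do not expect a substantive obstacle here; the only delicate point is making sure the minimum-degree hypothesis is invoked to guarantee that the \emph{larger} product $G'\Box H'$---and not merely $G\Box H$---admits a $k$TDS, after which the neighbourhood-containment inequality is immediate.
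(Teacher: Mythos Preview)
Your proof is correct and follows essentially the same approach as the paper: both argue that a $k$TDS of $G\Box H$ remains a $k$TDS after adding edges, so the minimum can only drop. Your version is more explicit about verifying that $G\Box H$ is a spanning subgraph of $G'\Box H'$ and that the minimum-degree hypothesis guarantees both domination numbers are defined, but the core idea is identical.
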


\begin{proof}
Since the ``are neighbors'' vertex relationship is preserved when adding edges to a graph, any $k$TDS of $G \Box H$ remains a $k$TDS if we add edges to $G$ or $H$.
\end{proof}

The other extreme is achieved by $k$-regular graphs $G$, where $\gamma_{\times k,t}(G)=|V(G)|$, although it's not always possible to delete edges from a graph $G'$ with $\delta(G) \geq k$ to create a $k$-regular graph.

\section{The rook's graph}

In this section, we find formulas for the $k$-tuple total domination number of $K_n \Box K_m$, i.e., the $n \times m$ rook's graph.  Theorem~\ref{th:rookbound} implies that $\gamma_{\times k,t}(K_n \Box K_m)$ is an upper bound on $\gamma_{\times k,t}(G \Box H)$ when $G$ has $n$ vertices and $H$ has $m$ vertices.  Assume the vertex set of $V(K_n)$ is $\mathbb{Z}_n$.

For any $n \times m$ $(0,1)$-matrix $M=(m_{ij})$, we define $$\kappa(i,j)=\overbrace{\left( \textstyle\sum_{z \in \mathbb{Z}_m} m_{iz} \right)}^{\text{$i$-th row sum}} + \overbrace{\left( \textstyle\sum_{z \in \mathbb{Z}_n} m_{zj} \right)}^{\text{$j$-th column sum}} - 2m_{ij}.$$  A $k$TDS $D$ of $K_n \Box K_m$ corresponds to an $n \times m$ $(0,1)$-matrix $M=(m_{ij})$ with $m_{ij}=1$ if and only if $(i,j) \in D$; the matrix $M$ satisfies
\begin{equation*}
\kappa(i,j) \geq k
\end{equation*}
for all $i \in \mathbb{Z}_n$ and $j \in \mathbb{Z}_m$, which we call \emph{the $\kappa$ bound}.  We call an $n \times m$ $(0,1)$-matrix $M$ a \emph{$k$TDS matrix} if it satisfies the $\kappa$ bound for all $i \in \mathbb{Z}_n$ and $j \in \mathbb{Z}_m$.  Futher, we call $M$ a \emph{min-$k$TDS matrix} if it has exactly $\gamma_{\times k,t}(K_n \Box K_m)$ ones.  Note that a $k$TDS matrix (resp.\ min-$k$TDS matrix) remains a $k$TDS matrix (resp.\ min-$k$TDS matrix) under permutations of its rows and/or columns, and after taking its matrix transpose.

We can also interpret $(0,1)$-matrices as biadjacency matrices of bipartite graphs (since $K_n \Box K_m$ is isomorphic to the line graph of $K_{n,m}$).  Thus, a $k$TDS $D$ of $K_n \Box K_m$ also corresponds to a bipartite graph with vertex bipartition $\{R_i\}_{i \in \mathbb{Z}_n} \cup \{C_j\}_{j \in \mathbb{Z}_m}$ and an edge $R_i C_j$ whenever $(i,j) \in D$ (or equivalently whenever $m_{ij}=1$).  The bipartite graph has the property that for any pair of vertices $(R_i,C_j)$,
\begin{equation}\label{eq:bipart}
k \leq \begin{cases} \mathrm{deg}(R_i)+\mathrm{deg}(C_j)-2 & \text{if $R_i$ is adjacent to $C_j$} \\ \mathrm{deg}(R_i)+\mathrm{deg}(C_j) & \text{if $R_i$ is not adjacent to $C_j$}. \end{cases}
\end{equation}
An example of these correspondences is given in Figure~\ref{fi:rook34mat}.

\begin{figure}[htp]
\centering
\begin{tikzpicture}
\matrix[nodes={draw, thick, circle},row sep=0.7cm,column sep=0.7cm] {
  \node(11){$0$}; &
  \node(12){$1$}; &
  \node(13){$0$}; &
  \node(14){$0$}; \\
  \node(21){$1$}; &
  \node(22){$1$}; &
  \node(23){$1$}; &
  \node(24){$1$}; \\
  \node(31){$0$}; &
  \node(32){$1$}; &
  \node(33){$0$}; &
  \node(34){$0$}; \\
};
\draw (11) to (12) to (13) to (14); \draw (11) to[bend left] (13); \draw (12) to[bend left] (14); \draw (11) to[bend left] (14);
\draw (21) to (22) to (23) to (24); \draw (21) to[bend left] (23); \draw (22) to[bend left] (24); \draw (21) to[bend left] (24);
\draw (31) to (32) to (33) to (34); \draw (31) to[bend left] (33); \draw (32) to[bend left] (34); \draw (31) to[bend left] (34);
\draw (11) to (21) to (31) to[bend left] (11);  \draw (12) to (22) to (32) to[bend left] (12);  \draw (13) to (23) to (33) to[bend left] (13);  \draw (14) to (24) to (34) to[bend left] (14);
\end{tikzpicture}
\qquad\qquad\qquad
\begin{tikzpicture}
\tikzstyle{vertex}=[draw,circle,minimum size=20pt,inner sep=0pt]

\node[vertex] at (0,2) (r1) {$R_1$};
\node[vertex] at (0,1) (r2) {$R_2$};
\node[vertex] at (0,0) (r3) {$R_3$};

\node[vertex] at (2,2.5) (c1) {$C_1$};
\node[vertex] at (2,1.5) (c2) {$C_2$};
\node[vertex] at (2,0.5) (c3) {$C_3$};
\node[vertex] at (2,-0.5) (c4) {$C_4$};

\draw (r1) -- (c2);

\draw (r2) -- (c1);
\draw (r2) -- (c2);
\draw (r2) -- (c3);
\draw (r2) -- (c4);

\draw (r3) -- (c2);
\end{tikzpicture}
\caption{The $3 \times 4$ rook's graph $K_3 \Box K_4$ with the vertices in the $2$TDS in Figure~\ref{fi:rook34} labeled~$1$, illustrating the corresponding $(0,1)$-matrix and bipartite graph.}\label{fi:rook34mat}
\end{figure}
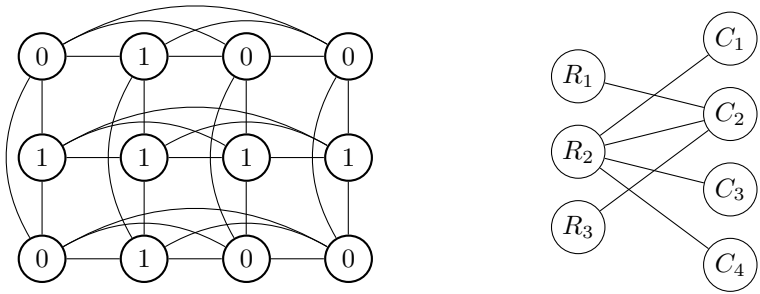

\begin{lemma}\label{lm:degsum}
When $m \geq n \geq 1$ and $n+m \neq 2$, $$\gamma_{\times k,t}(K_n \Box K_m) \geq \frac{knm}{n+m-2}.$$
\end{lemma}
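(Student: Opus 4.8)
The plan is to prove this by a double-counting argument on the $\kappa$ values; this is really just the matrix-language incarnation of the first (unnumbered) degree-counting lemma of Section~2 applied to the graph $K_n \Box K_m$, which is regular of degree $(n-1)+(m-1)=n+m-2$. First I would fix a min-$k$TDS $D$ of $K_n \Box K_m$ together with its associated $k$TDS matrix $M=(m_{ij})$, and abbreviate the $i$-th row sum and $j$-th column sum by $r_i=\sum_{z\in\mathbb{Z}_m} m_{iz}$ and $c_j=\sum_{z\in\mathbb{Z}_n} m_{zj}$, so that $\kappa(i,j)=r_i+c_j-2m_{ij}$ and $|D|=\sum_i r_i=\sum_j c_j$. (If no $k$TDS exists the claimed inequality is vacuous, so we may assume one does.)

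Next I would exploit that $M$ satisfies the $\kappa$ bound $\kappa(i,j)\geq k$ in every one of its $nm$ cells. Summing this inequality over all $(i,j)\in\mathbb{Z}_n\times\mathbb{Z}_m$ yields $\sum_{i,j}\kappa(i,j)\geq knm$ on one side, while on the other side the sum evaluates exactly:
\begin{align*}
\sum_{i\in\mathbb{Z}_n}\sum_{j\in\mathbb{Z}_m}\kappa(i,j)
 &= \sum_{i,j}\bigl(r_i+c_j-2m_{ij}\bigr)
  = m\sum_{i}r_i + n\sum_{j}c_j - 2\sum_{i,j}m_{ij} \\
 &= m|D| + n|D| - 2|D| = (n+m-2)\,|D|,
\end{align*}
where each row sum $r_i$ is counted $m$ times and each column sum $c_j$ is counted $n$ times. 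Combining the two sides gives $(n+m-2)\,|D|\geq knm$.

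Finally, the hypotheses $m\geq n\geq 1$ and $n+m\neq 2$ force $n+m\geq 3$, hence $n+m-2\geq 1>0$, so I may divide to conclude $\gamma_{\times k,t}(K_n\Box K_m)=|D|\geq knm/(n+m-2)$; since $|D|$ is an integer this even upgrades to the ceiling $\lceil knm/(n+m-2)\rceil$. There is no genuinely hard step here: the only points demanding care are the bookkeeping of the multiplicities $m$ and $n$ in the double sum, and the observation that $n+m-2$ is strictly positive, which is exactly what the hypothesis $n+m\neq 2$ guarantees.
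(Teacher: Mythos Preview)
Your proof is correct and is essentially the same argument as the paper's, just phrased in the matrix language rather than the bipartite-graph language: summing the $\kappa$ bound over all cells is exactly the paper's summation of inequality~\eqref{eq:bipart} over all vertex pairs $(R_i,C_j)$, since $r_i=\deg(R_i)$, $c_j=\deg(C_j)$, and $m_{ij}$ is the edge indicator. The bookkeeping and the use of $n+m\neq 2$ to ensure a positive divisor are handled correctly.
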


\begin{proof}
For any min-$k$TDS, $\gamma_{\times k,t}(K_n \Box K_m)$ is equal to the number of edges in the corresponding bipartite graph.  We sum \eqref{eq:bipart} over all pairs of vertices $(R_i,C_j)$ to obtain $nmk \leq (n+m-2)\gamma_{\times k,t}(K_n \Box K_m)$.
\end{proof}

Lemma~\ref{lm:degsum} will be tight when the $\kappa(i,j)=k$ for all $i \in \mathbb{Z}_n$ and $j \in \mathbb{Z}_m$.  This occurs when $n=2$ and $m \geq k \geq 1$ for an $n \times m$ $(0,1)$-matrix with ones in the first $k$ columns, and zeroes elsewhere.  For example:
\begin{center}
$\begin{array}{ccccc}
k=1 & \qquad & k=2 & \qquad & k=3 \\
\begin{tikzpicture}
\matrix[square matrix]{
|[fill=black!60]| \\
|[fill=black!60]| \\
};
\end{tikzpicture}
\quad
\begin{tikzpicture}
\matrix[square matrix]{
|[fill=black!60]| & |[fill=white]| \\
|[fill=black!60]| & |[fill=white]| \\
};
\end{tikzpicture}
\quad
\begin{tikzpicture}
\matrix[square matrix]{
|[fill=black!60]| & |[fill=white]| & |[fill=white]| \\
|[fill=black!60]| & |[fill=white]| & |[fill=white]| \\
};
\end{tikzpicture}
& &
\begin{tikzpicture}
\matrix[square matrix]{
|[fill=black!60]| & |[fill=black!60]| \\
|[fill=black!60]| & |[fill=black!60]| \\
};
\end{tikzpicture}
\quad
\begin{tikzpicture}
\matrix[square matrix]{
|[fill=black!60]| & |[fill=black!60]| & |[fill=white]| \\
|[fill=black!60]| & |[fill=black!60]| & |[fill=white]| \\
};
\end{tikzpicture}
\quad
\begin{tikzpicture}
\matrix[square matrix]{
|[fill=black!60]| & |[fill=black!60]| & |[fill=white]| & |[fill=white]| \\
|[fill=black!60]| & |[fill=black!60]| & |[fill=white]| & |[fill=white]| \\
};
\end{tikzpicture}
& &
\begin{tikzpicture}
\matrix[square matrix]{
|[fill=black!60]| & |[fill=black!60]| & |[fill=black!60]| \\
|[fill=black!60]| & |[fill=black!60]| & |[fill=black!60]| \\
};
\end{tikzpicture}
\quad
\begin{tikzpicture}
\matrix[square matrix]{
|[fill=black!60]| & |[fill=black!60]| & |[fill=black!60]| & |[fill=white]| \\
|[fill=black!60]| & |[fill=black!60]| & |[fill=black!60]| & |[fill=white]| \\
};
\end{tikzpicture}
\quad
\begin{tikzpicture}
\matrix[square matrix]{
|[fill=black!60]| & |[fill=black!60]| & |[fill=black!60]| & |[fill=white]| & |[fill=white]| \\
|[fill=black!60]| & |[fill=black!60]| & |[fill=black!60]| & |[fill=white]| & |[fill=white]| \\
};
\end{tikzpicture}
\end{array}$
\end{center}
It is also achieved by any $n \times m$ all-$1$ matrix when $k=n+m-2$.  If equality does not hold in Lemma~\ref{lm:degsum}, then equality does not hold in the $\kappa$ bound for some cell, or equivalently, some vertex of $K_n \Box K_m$ has more than $k$ neighbors in the corresponding $k$TDS.  Of course, to have equality in Lemma~\ref{lm:degsum}, we must have $knm$ divisible by $n+m-2$.

\begin{lemma}\label{lm:sparserow}
For $n \geq 1$ and $m \geq 1$, an $n \times m$ $k$TDS matrix with an all-$0$ column has at least $kn$ ones.
\end{lemma}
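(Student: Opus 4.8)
The plan is to apply the $\kappa$ bound directly to the cells of the all-$0$ column, which immediately forces every row to be heavy. Let $M=(m_{ij})$ be an $n \times m$ $k$TDS matrix, write $r_i = \sum_{z \in \mathbb{Z}_m} m_{iz}$ for the $i$-th row sum, and suppose column $j_0 \in \mathbb{Z}_m$ is all zeros, so that $m_{i,j_0}=0$ for every $i \in \mathbb{Z}_n$ and the $j_0$-th column sum vanishes.

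First I would evaluate $\kappa(i,j_0)$ for an arbitrary row $i$. By definition of $\kappa$, the column-sum term and the $-2m_{i,j_0}$ correction term both vanish, leaving $\kappa(i,j_0) = r_i$. Since $M$ satisfies the $\kappa$ bound everywhere, we have $\kappa(i,j_0) \geq k$, and hence $r_i \geq k$ for every $i \in \mathbb{Z}_n$. In other words, the presence of an all-$0$ column promotes the $\kappa$ constraint into a plain lower bound on each individual row sum.

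The final step is to sum over rows: the total number of ones in $M$ equals $\sum_{i \in \mathbb{Z}_n} r_i \geq \sum_{i \in \mathbb{Z}_n} k = kn$, as claimed. I do not anticipate any genuine obstacle here; the entire content of the lemma is the single observation that a zero column strips the $\kappa$ expression down to a bare row sum, so that each row must carry at least $k$ ones on its own. The boundary hypotheses $n \geq 1$ and $m \geq 1$ are only needed to ensure that the matrix and the column $j_0$ exist, and the argument degenerates gracefully in those edge cases.
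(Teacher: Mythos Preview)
Your proof is correct and follows exactly the same approach as the paper: apply the $\kappa$ bound at the cells $(i,j_0)$ of the all-$0$ column to deduce that each row sum is at least $k$, then sum over rows. The paper's version is terser but the logic is identical.
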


\begin{proof}
If column $j^*$ is an all-$0$ column, then to achieve $\kappa(i,j^*) \geq k$ for any $i \in \mathbb{Z}_n$, we need $k$ ones in row $i$.  Since this is true for all $n$ rows, we must have $kn$ ones.
\end{proof}

There are instances when $kn$ ones is the least number of ones in any $n \times m$ $k$TDS matrix; we establish some cases in the following theorem.

\begin{theo}\label{th:manycols}
When $m \geq n \geq 2$ and $m \geq k$,
\begin{equation}\label{eq:Knmsimplebound}
\gamma_{\times k,t}(K_n \Box K_m) \leq kn
\end{equation}
with equality when $m \geq kn-1$.

When $m \geq k+1$,
\begin{equation}\label{eq:Knmsimplebound1row}
\gamma_{\times k,t}(K_1 \Box K_m)=k+1.
\end{equation}
\end{theo}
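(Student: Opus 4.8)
The plan is to work entirely within the $(0,1)$-matrix formulation and the $\kappa$ bound introduced above, treating the three assertions—the upper bound~\eqref{eq:Knmsimplebound}, its sharpness, and the $K_1$ formula~\eqref{eq:Knmsimplebound1row}—separately.

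For the upper bound I would exhibit an explicit $k$TDS matrix with exactly $kn$ ones. Since $m \geq k$, the natural candidate is the $n \times m$ matrix whose first $k$ columns are all-$1$ and whose remaining $m-k$ columns are all-$0$; this has $kn$ ones. Each row sum equals $k$ and each of the first $k$ column sums equals $n$. A cell $(i,j)$ with $m_{ij}=1$ then has $\kappa(i,j)=k+n-2 \geq k$ precisely because $n \geq 2$, while a cell with $m_{ij}=0$ has $\kappa(i,j)=k$. Thus the $\kappa$ bound holds at every cell and~\eqref{eq:Knmsimplebound} follows.

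For sharpness (the matching lower bound when $m \geq kn-1$), I would take an arbitrary $k$TDS matrix $M$ with $t$ ones and argue $t \geq kn$. If $M$ has an all-$0$ column, Lemma~\ref{lm:sparserow} gives $t \geq kn$ at once. Otherwise every column contains a $1$, forcing $t \geq m \geq kn-1$, so it remains only to exclude $t=kn-1$. In that borderline case $t=m=kn-1$ and every column holds exactly one $1$, so each $1$-cell $(i,j)$ satisfies $\kappa(i,j)=r_i-1 \geq k$, where $r_i$ is its row sum; hence $r_i \geq k+1$ for every row containing a $1$. The crux is then a short case split. For $k \geq 2$, an all-$0$ row would force $\kappa(i,j)=c_j \geq k$ through its $0$-cells, impossible when every column sum $c_j$ equals $1$; so every row is nonempty and $t=\sum_i r_i \geq n(k+1) > kn-1$, a contradiction. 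For $k=1$ the equality $m=kn-1=n-1$ contradicts the standing hypothesis $m \geq n$. I expect this borderline analysis—ruling out the configuration with exactly one $1$ per column—to be the main obstacle, as it is the only place the hypothesis $m \geq kn-1$ is genuinely used and where the $k=1$ and $k \geq 2$ behaviours diverge.

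Finally, for $K_1 \Box K_m \cong K_m$ the relevant matrices are $1 \times m$, and here the column sum of column $j$ is just $m_{1j}$, so $\kappa(1,j)=t-m_{1j}$ where $t$ is the number of ones. The $\kappa$ bound at a $1$-cell reads $t-1 \geq k$, giving the lower bound $t \geq k+1$; conversely, when $m \geq k+1$ a row with exactly $k+1$ ones (and the rest $0$) gives $\kappa(1,j)=k$ at $1$-cells and $\kappa(1,j)=k+1$ at $0$-cells, so $t=k+1$ is attainable. This yields~\eqref{eq:Knmsimplebound1row}.
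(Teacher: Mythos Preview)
Your proposal is correct and follows essentially the same approach as the paper: the same explicit $k$-column construction for the upper bound, the same use of Lemma~\ref{lm:sparserow} for the all-$0$ column case, and the same reduction to the situation $t=m=kn-1$ with one $1$ per column. The only cosmetic difference is in the final subcase: where the paper disposes of an all-$0$ row by invoking the transposed Lemma~\ref{lm:sparserow} (giving $t\geq km\geq kn$), you instead read off a direct contradiction from the $\kappa$ bound when $k\geq 2$ and handle $k=1$ via the hypothesis $m\geq n$; both routes are equally short and valid.
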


\begin{proof}
If $m \geq n \geq 2$ and $m \geq k$, the $n \times m$ $(0,1)$-matrix with ones in the first $k$ columns, and zeros elsewhere is a $k$TDS matrix, and has $kn$ ones, proving \eqref{eq:Knmsimplebound}.

Now also assume $m \geq kn-1$ and let $M$ be an $n \times m$ $k$TDS matrix.  If $M$ has a column of zeros, then $M$ has at least $kn$ ones by Lemma~\ref{lm:sparserow}.  If $M$ has no column of zeros but has at least $kn$ columns, then $M$ has at least $kn$ ones.  Thus, assume $m=kn-1$ and $M$ has a one in every column.  If $M$ has fewer than $kn$ ones, it must have exactly $1$ one in each column.  Therefore, if $m_{ij}=1$, then row $i$ must have $k+1$ ones to satisfy $\kappa(i,j) \geq k$.  If this is true for every row, then $M$ has at least $(k+1)n \geq kn$ ones.  Otherwise, there's a row of zeros, and Lemma~\ref{lm:sparserow} implies there are at least $km \geq kn$ ones.

To prove \eqref{eq:Knmsimplebound1row}, we observe that the $1 \times m$ $(0,1)$-matrix with ones in the first $k+1$ columns, and zeros elsewhere is a $k$TDS matrix, and has $k+1$ ones.  We also observe that if a $1 \times m$ $(0,1)$-matrix has fewer than $k+1$ ones, then $\kappa(i,j) \not\geq k$ for the cells $(i,j)$ containing ones, and thus is not a $k$TDS.
\end{proof}

In fact, Theorem~\ref{th:manycols} resolves the $k=1$ case since $\gamma_{\times k,t}(K_n \Box K_m)$ is undefined when $n=m=1$ (as $\delta(K_n \Box K_m)<k$).

\subsection{$2$-tuple total domination}

In this section, we derive a general formula for $\gamma_{\times 2,t}(K_n \Box K_m)$ in Proposition~\ref{pr:k2count}.  Motivated by Burchett, Lane, and Lachniet \cite{BLL}, given a $(0,1)$-matrix $M$ we construct a graph $\Gamma(M)$ with vertices corresponding to the ones in $M$, and edges between two ones belonging to the same row or column if there are no ones between them.  The following gives one such example:
\begin{center}
\raisebox{0.3in}{
$\begin{bmatrix}
1 & 0 & 0 & 0 \\
0 & 1 & 1 & 1 \\
1 & 0 & 0 & 0 \\
1 & 0 & 0 & 0 \\
\end{bmatrix}$
}
\qquad
\begin{tikzpicture}[scale=0.5]
\tikzstyle{vertex}=[draw,circle,fill=black,minimum size=6pt,inner sep=0pt]

\node[vertex] (v1) at (0,0) {};
\node[vertex] (v2) at (0,-2) {};
\node[vertex] (v3) at (0,-3) {};

\node[vertex] (u1) at (1,-1) {};
\node[vertex] (u2) at (2,-1) {};
\node[vertex] (u3) at (3,-1) {};

\draw (v1) -- (v2) -- (v3);
\draw (u1) -- (u2) -- (u3);
\end{tikzpicture}
\end{center}
Since $2$TDS matrices $M$ correspond to graphs, we can talk about the (connected) components of $M$.  A $2$TDS matrix $M$ with a component $H$, up to permutations of the rows and colums of $M$, looks like one of the following:
$$
\begin{array}{|c|c|}
\hline
H & \emptyset \\
\hline
\emptyset & ? \\
\hline
\end{array},
\quad
\begin{array}{|c|}
\hline
H \\
\hline
\emptyset \\
\hline
\end{array},
\quad
\begin{array}{|c|c|}
\hline
H & \emptyset \\
\hline
\end{array},
\text{ or}
\quad
\begin{array}{|c|}
\hline
H \\
\hline
\end{array}
$$
where the question mark ($?$) denotes some $(0,1)$-submatrix, and $\emptyset$ denotes an all-$0$ submatrix.  Components of $2$TDS matrices have the following properties:
\begin{itemize}
 \item Components have no all-$0$ rows and no all-$0$ columns.
 \item Components are $2$TDS matrices in their own right.
 \item While $\Gamma(M)$ is a graph, its components arise from submatrices of $M$, so we can discuss, say, $x \times y$ components.
\end{itemize}

We will now study component switchings in $2$TDS matrices.  The following two lemmas give conditions on when some kinds of switchings are possible without increasing the number of ones (Lemma~\ref{lm:compones}) nor violating the $\kappa$ bound (Lemma~\ref{lm:switch}).

\begin{lemma}\label{lm:compones}
Let $M$ be a $2$TDS matrix and let $H$ be an $x \times y$ component of $M$.  Then the number of ones in $H$ is at least $x+y-1$.
\end{lemma}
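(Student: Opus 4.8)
The plan is to pass to the bipartite-graph interpretation of $M$ and reduce the claim to a spanning-tree count. Recall from the discussion around \eqref{eq:bipart} that a $(0,1)$-matrix corresponds to a bipartite graph $B$ with row-vertices $\{R_i\}$ and column-vertices $\{C_j\}$, in which each one in cell $(i,j)$ is the edge $R_iC_j$. Distinct ones give distinct edges, so the number of ones in any submatrix equals the number of edges it contributes to $B$. First I would restrict attention to the subgraph $B_H$ of $B$ spanned by the $x$ row-vertices and $y$ column-vertices met by $H$, together with the edges coming from the ones of $H$. Since a component of a $2$TDS matrix has no all-$0$ row and no all-$0$ column, every one of these $x+y$ vertices is incident with at least one edge of $B_H$; thus $B_H$ has no isolated vertices and exactly $x+y$ vertices.

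The key step is to show that $B_H$ is connected, and here I would argue that connectivity of $H$ inside $\Gamma(M)$ forces connectivity of $B_H$. Two ones that are adjacent in $\Gamma(M)$ lie in a common row or column, and hence, viewed as edges of $B$, share a vertex $R_i$ or $C_j$. Consequently any path in $\Gamma(M)$ between two ones of $H$ translates step by step into a walk through $B_H$ joining the corresponding edges. Because $H$ is a single component of $\Gamma(M)$, all its ones therefore lie in one connected component of $B_H$; and since every vertex of $B_H$ is incident with such a one, the whole of $B_H$ is connected on all $x+y$ vertices.

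To finish, a connected graph on $x+y$ vertices contains a spanning tree and so has at least $(x+y)-1$ edges. As the edges of $B_H$ are precisely the ones of $H$, this gives that $H$ has at least $x+y-1$ ones, which is the desired bound.

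I expect the only genuine obstacle to be making the translation between $\Gamma(M)$ and $B_H$ airtight: in particular, confirming that the ``no ones between them'' restriction in the definition of the edges of $\Gamma(M)$ does not sever connectivity. This is harmless, since the ones of a single row (respectively column) are linked consecutively into a path in $\Gamma(M)$, so $\Gamma$-connectivity and $B$-connectivity coincide. Once this point is settled, the spanning-tree bound closes the argument immediately.
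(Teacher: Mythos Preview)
Your argument is correct and in fact cleaner than the paper's. Both proofs ultimately exploit the connectedness of the component, but they work in different auxiliary graphs. The paper stays in $\Gamma(H)$, where ones are \emph{vertices}: it initializes $S$ to the closed neighborhood of a single one (a case analysis giving $3$, $4$, or $5$ ones spanning a matching number of rows plus columns), then grows $S$ one vertex at a time, observing that each new vertex introduces at most one new row or one new column. After reaching all $x$ rows and $y$ columns the arithmetic gives $|S|\ge x+y-1$.

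You instead move to the bipartite graph $B_H$, where ones are \emph{edges} and the $x+y$ row/column labels are vertices. The translation ``adjacent in $\Gamma(M)$ $\Rightarrow$ incident in $B_H$'' is exactly right (and your remark that the consecutive-ones restriction in $\Gamma$ is harmless, since all ones of a row form a path, is the one point worth making explicit). Once $B_H$ is seen to be connected on $x+y$ vertices, the spanning-tree edge bound gives $x+y-1$ immediately, with no case analysis. Your route is shorter and more conceptual; the paper's route is more concrete but carries the overhead of the initial neighborhood cases.
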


\begin{proof}
Let $\Gamma(H)$ be the subgraph of $\Gamma(M)$ corresponding to $H$.  We choose an arbitrary vertex $v$ of $\Gamma(H)$ which has at least $2$ neighbors (since $M$ is a $2$TDS matrix) but at most $4$ neighbors (by definition of $\Gamma$).  The closed neighborhood $N_{\Gamma(H)}[v]$ has one of these properties:
\begin{itemize}
 \item It has cardinality $5$ and intersects $3$ rows and $3$ columns, and looks like the following:
\begin{center}
\begin{tikzpicture}[scale=0.3]
\tikzstyle{vertex}=[draw,circle,fill=black,minimum size=5pt,inner sep=0pt]
\tikzstyle{vertex2}=[circle,fill=white,minimum size=5pt,inner sep=0pt]

\node[vertex] (c) at (0,0) {};

\node[vertex] (a1) at (1,0) {};
\node[vertex] (a2) at (-1,0) {};
\node[vertex] (a3) at (0,1) {};
\node[vertex] (a4) at (0,-1) {};

\draw (c) -- (a1);
\draw (c) -- (a2);
\draw (c) -- (a3);
\draw (c) -- (a4);
\end{tikzpicture}
\end{center}
 \item It has cardinality $4$ and either (a) intersects $3$ rows and $2$ columns, or (b) intersects $2$ rows and $3$ columns, and looks like one of the following:
\begin{center}
\begin{tikzpicture}[scale=0.3]
\tikzstyle{vertex}=[draw,circle,fill=black,minimum size=5pt,inner sep=0pt]
\tikzstyle{vertex2}=[circle,fill=white,minimum size=5pt,inner sep=0pt]

\node[vertex] (c) at (0,0) {};

\node[vertex] (a1) at (1,0) {};
\node[vertex2] (a2) at (-1,0) {};
\node[vertex] (a3) at (0,1) {};
\node[vertex] (a4) at (0,-1) {};

\draw (c) -- (a1);
\draw (c) -- (a3);
\draw (c) -- (a4);
\end{tikzpicture}
\quad
\begin{tikzpicture}[scale=0.3]
\tikzstyle{vertex}=[draw,circle,fill=black,minimum size=5pt,inner sep=0pt]
\tikzstyle{vertex2}=[circle,fill=white,minimum size=5pt,inner sep=0pt]

\node[vertex] (c) at (0,0) {};

\node[vertex] (a1) at (1,0) {};
\node[vertex] (a2) at (-1,0) {};
\node[vertex2] (a3) at (0,1) {};
\node[vertex] (a4) at (0,-1) {};

\draw (c) -- (a1);
\draw (c) -- (a2);
\draw (c) -- (a4);
\end{tikzpicture}
\quad
\begin{tikzpicture}[scale=0.3]
\tikzstyle{vertex}=[draw,circle,fill=black,minimum size=5pt,inner sep=0pt]
\tikzstyle{vertex2}=[circle,fill=white,minimum size=5pt,inner sep=0pt]

\node[vertex] (c) at (0,0) {};

\node[vertex] (a1) at (1,0) {};
\node[vertex] (a2) at (-1,0) {};
\node[vertex] (a3) at (0,1) {};
\node[vertex2] (a4) at (0,-1) {};

\draw (c) -- (a1);
\draw (c) -- (a2);
\draw (c) -- (a3);
\end{tikzpicture}
\quad
\begin{tikzpicture}[scale=0.3]
\tikzstyle{vertex}=[draw,circle,fill=black,minimum size=5pt,inner sep=0pt]
\tikzstyle{vertex2}=[circle,fill=white,minimum size=5pt,inner sep=0pt]

\node[vertex] (c) at (0,0) {};

\node[vertex2] (a1) at (1,0) {};
\node[vertex] (a2) at (-1,0) {};
\node[vertex] (a3) at (0,1) {};
\node[vertex] (a4) at (0,-1) {};

\draw (c) -- (a2);
\draw (c) -- (a3);
\draw (c) -- (a4);
\end{tikzpicture}
\end{center}
 \item It has cardinality $3$ and either (a) intersects $1$ row and $3$ columns, (b) intersects $2$ rows and $2$ columns, or (c) intersects $3$ row and $1$ columns, and looks like one of the following:
\begin{center}
\begin{tikzpicture}[scale=0.3]
\tikzstyle{vertex}=[draw,circle,fill=black,minimum size=5pt,inner sep=0pt]
\tikzstyle{vertex2}=[circle,fill=white,minimum size=5pt,inner sep=0pt]

\node[vertex] (c) at (0,0) {};

\node[vertex2] (a1) at (1,0) {};
\node[vertex2] (a2) at (-1,0) {};
\node[vertex] (a3) at (0,1) {};
\node[vertex] (a4) at (0,-1) {};

\draw (c) -- (a3);
\draw (c) -- (a4);
\end{tikzpicture}
\quad
\begin{tikzpicture}[scale=0.3]
\tikzstyle{vertex}=[draw,circle,fill=black,minimum size=5pt,inner sep=0pt]
\tikzstyle{vertex2}=[circle,fill=white,minimum size=5pt,inner sep=0pt]

\node[vertex] (c) at (0,0) {};

\node[vertex2] (a1) at (1,0) {};
\node[vertex] (a2) at (-1,0) {};
\node[vertex2] (a3) at (0,1) {};
\node[vertex] (a4) at (0,-1) {};

\draw (c) -- (a2);
\draw (c) -- (a4);
\end{tikzpicture}
\quad
\begin{tikzpicture}[scale=0.3]
\tikzstyle{vertex}=[draw,circle,fill=black,minimum size=5pt,inner sep=0pt]
\tikzstyle{vertex2}=[circle,fill=white,minimum size=5pt,inner sep=0pt]

\node[vertex] (c) at (0,0) {};

\node[vertex2] (a1) at (1,0) {};
\node[vertex] (a2) at (-1,0) {};
\node[vertex] (a3) at (0,1) {};
\node[vertex2] (a4) at (0,-1) {};

\draw (c) -- (a2);
\draw (c) -- (a3);
\end{tikzpicture}
\quad
\begin{tikzpicture}[scale=0.3]
\tikzstyle{vertex}=[draw,circle,fill=black,minimum size=5pt,inner sep=0pt]
\tikzstyle{vertex2}=[circle,fill=white,minimum size=5pt,inner sep=0pt]

\node[vertex] (c) at (0,0) {};

\node[vertex] (a1) at (1,0) {};
\node[vertex2] (a2) at (-1,0) {};
\node[vertex2] (a3) at (0,1) {};
\node[vertex] (a4) at (0,-1) {};

\draw (c) -- (a1);
\draw (c) -- (a4);
\end{tikzpicture}
\quad
\begin{tikzpicture}[scale=0.3]
\tikzstyle{vertex}=[draw,circle,fill=black,minimum size=5pt,inner sep=0pt]
\tikzstyle{vertex2}=[circle,fill=white,minimum size=5pt,inner sep=0pt]

\node[vertex] (c) at (0,0) {};

\node[vertex] (a1) at (1,0) {};
\node[vertex2] (a2) at (-1,0) {};
\node[vertex] (a3) at (0,1) {};
\node[vertex2] (a4) at (0,-1) {};

\draw (c) -- (a1);
\draw (c) -- (a3);
\end{tikzpicture}
\quad
\begin{tikzpicture}[scale=0.3]
\tikzstyle{vertex}=[draw,circle,fill=black,minimum size=5pt,inner sep=0pt]
\tikzstyle{vertex2}=[circle,fill=white,minimum size=5pt,inner sep=0pt]

\node[vertex] (c) at (0,0) {};

\node[vertex] (a1) at (1,0) {};
\node[vertex] (a2) at (-1,0) {};
\node[vertex2] (a3) at (0,1) {};
\node[vertex2] (a4) at (0,-1) {};

\draw (c) -- (a1);
\draw (c) -- (a2);
\end{tikzpicture}
\end{center}
\end{itemize}
We proceed algorithmically.  We initialize $S \gets N_{\Gamma(H)}[v]$ and iteratively add vertices to $S$ from $\Gamma(H)$ which (a) do not already belong to $S$, and (b) have a neighbor in $S$.  As a result of each iteration:
\begin{enumerate}
 \item the number of vertices in $S$ increases by exactly $1$, and
 \item one of the following:
 \begin{itemize}
  \item the number of rows of $M$ that $S$ intersects increases by exactly $1$, and the number of columns that $S$ intersects remains unchanged,
  \item the number of rows of $M$ that $S$ intersects remains unchanged, and the number of columns that $S$ intersects increases by exactly $1$, or
  \item the number of rows of $M$ that $S$ intersects remains unchanged, and the number of columns that $S$ intersects remains unchanged.
 \end{itemize}
\end{enumerate}
Since $H$ is a connected component, $\Gamma(H)$ has a spanning tree, and thus every vertex of $\Gamma(H)$ will be added to $S$ at some point.  At the end of this process $S$ intersects all $x$ rows and all $y$ columns of $H$.  The number of ones in $H$ is equal to $|S|$, which is (a) at least $5+(x-3)+(y-3)$, (b) at least $4+(x-3)+(y-2)$ or $4+(x-2)+(y-3)$, or (c) at least $3+(x-3)+(y-1)$, or $3+(x-2)+(y-2)$, or $3+(x-1)+(y-3)$.  Each of these is equal to $x+y-1$.
\end{proof}

\begin{lemma}\label{lm:switch}
Let $M$ be a $2$TDS matrix with no all-$0$ rows and no all-$0$ columns, and with an $x \times y$ union of components $K$.  Let $H$ be an $x \times y$ $2$TDS matrix with no all-$0$ rows and no all-$0$ columns.  Then replacing $K$ with $H$ in $M$ gives a $2$TDS matrix.
\end{lemma}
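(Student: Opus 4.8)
The plan is to exploit the fact that a union of components is \emph{isolated} inside $M$. Since $K$ is a union of connected components of $\Gamma(M)$, say occupying a set $R$ of $x$ rows and a set $C$ of $y$ columns, no one of $M$ lying outside $K$ can occur in a row of $R$ or a column of $C$: such a one would share a row or column with a one of $K$ and hence lie in the same component. After permuting rows and columns so that $R$ and $C$ come first, $M$ therefore has the block form
\[
M=\begin{array}{|c|c|}\hline K & \emptyset \\ \hline \emptyset & M' \\ \hline\end{array},
\]
where $K$ is the $x\times y$ submatrix on $R\times C$, the two off-diagonal blocks are all-$0$, and $M'$ is the remaining $(n-x)\times(m-y)$ submatrix. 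Replacing $K$ by $H$ changes neither the zero blocks nor $M'$, so the resulting matrix $M^\ast$ has the same block shape with $H$ in the top-left corner.

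Next I would verify the $\kappa$ bound $\kappa(i,j)\ge 2$ for $M^\ast$ block by block, using that for every cell the relevant row sum and column sum of $M^\ast$ are each confined to a single block. For a cell in the $H$ block, both the row and column sums coincide with those of $H$, so $\kappa(i,j)$ equals the corresponding $\kappa$ value of $H$, which is at least $2$ because $H$ is a $2$TDS matrix. For a cell in the $M'$ block, the row and column sums coincide with those of $M'$, so $\kappa(i,j)$ is unchanged from its value in $M$ and is at least $2$ since $M$ is a $2$TDS matrix.

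The remaining and most delicate cases are the two off-diagonal zero blocks, and this is exactly where the no-all-$0$ hypotheses enter. For a cell $(i,j)$ in the top-right block we have $m^\ast_{ij}=0$, the row sum equals the sum of row $i$ of $H$, and the column sum equals the sum of column $j$ of $M'$; the former is at least $1$ because $H$ has no all-$0$ row, and the latter is at least $1$ because column $j$ of $M$ (which is supported in $M'$) is nonzero. Hence $\kappa(i,j)\ge 1+1=2$, and the bottom-left block is handled symmetrically, using that $H$ has no all-$0$ column and that the corresponding row of $M$ is nonzero. Having verified the bound on every cell, $M^\ast$ is a $2$TDS matrix. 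The only genuine subtlety is recognising that on the zero blocks the $\kappa$ value splits into one contribution from $H$ and one from $M'$, so that both no-all-$0$ conditions are precisely what is needed.
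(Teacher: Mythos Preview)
Your proof is correct and follows essentially the same approach as the paper: both verify the $\kappa$ bound cell by cell, splitting into the cases where the cell lies in $H$, lies entirely outside the affected rows and columns, or lies in one of the mixed (off-diagonal) regions. You make the block decomposition explicit up front, whereas the paper leaves it implicit and handles the off-diagonal case by noting directly that $m_{ij}=0$ there (else $K$ would not be a union of components) and that both the row and column contribute at least one $1$; the substance is the same.
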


\begin{proof}
Call the new submatrix $\hat{M}=(\hat{m}_{ij})$.  We check the $\kappa$ bound is satisfied:
\begin{itemize}
 \item If cell $(i,j)$ is in $H$, then since $H$ is a $2$TDS matrix, the $\kappa$ bound is satisfied.
 \item If cell $(i,j)$ neither shares a row nor column with $H$, then the $\kappa$-value for $\hat{M}$ is the same as the $\kappa$-value for $M$, so the $\kappa$ bound is satisfied.
 \item If cell $(i,j)$ is in $\hat{M} \setminus H$, and shares a column (resp.\ row) with $H$, we know (i) $m_{ij}=0$, otherwise the submatrix $K$ is not the union of components, and (ii) row $i$ (resp.\ column $j$) contains a one in $\hat{M}$, (iii) column $j$ (resp.\ row $i$) contains a one in $H$.  Thus the $\kappa$ bound is satisfied. \qedhere
\end{itemize}
\end{proof}

As an example, suppose a $2$TDS matrix $M$ no all-$0$ rows nor columns contains the union of components
\begin{center}
\begin{tikzpicture}
\matrix[square matrix]{
|[fill=black!60]| & |[fill=black!60]| & |[fill=white]| & |[fill=white]| \\
|[fill=black!60]| & |[fill=black!60]| & |[fill=white]| & |[fill=white]| \\
|[fill=white]| & |[fill=white]| & |[fill=black!60]| & |[fill=black!60]| \\
|[fill=white]| & |[fill=white]| & |[fill=black!60]| & |[fill=black!60]| \\
};
\end{tikzpicture}
\end{center}
then we can replace it by
\begin{center}
\begin{tikzpicture}
\matrix[square matrix]{
|[fill=white]| & |[fill=black!60]| & |[fill=black!60]| & |[fill=black!60]| \\
|[fill=black!60]| & |[fill=white]| & |[fill=white]| & |[fill=white]| \\
|[fill=black!60]| & |[fill=white]| & |[fill=white]| & |[fill=white]| \\
|[fill=black!60]| & |[fill=white]| & |[fill=white]| & |[fill=white]| \\
};
\end{tikzpicture}
\end{center}
and Lemma~\ref{lm:switch} implies that the matrix obtained after performing this switch is also a $2$TDS matrix.  Furthermore, since this switch decreases the number of ones, we deduce that the original matrix $M$ is not a min-$2$TDS matrix.

As another example, if a $2$TDS matrix $M$ no all-$0$ rows nor columns contains the component
\begin{center}
\begin{tikzpicture}
\matrix[square matrix]{
|[fill=black!60]| & |[fill=black!60]| & |[fill=white]| & |[fill=white]| \\
|[fill=black!60]| & |[fill=white]| & |[fill=black!60]| & |[fill=white]| \\
|[fill=white]| & |[fill=white]| & |[fill=black!60]| & |[fill=black!60]| \\
};
\end{tikzpicture}
\end{center}
we can replace it by 
\begin{center}
\begin{tikzpicture}
\matrix[square matrix]{
|[fill=black!60]| & |[fill=black!60]| & |[fill=black!60]| & |[fill=black!60]| \\
|[fill=black!60]| & |[fill=white]| & |[fill=white]| & |[fill=white]| \\
|[fill=black!60]| & |[fill=white]| & |[fill=white]| & |[fill=white]| \\
};
\end{tikzpicture}
\end{center}
and Lemma~\ref{lm:switch} implies that we obtain a $2$TDS matrix.  Moreover, since the number of ones is unchanged after performing this switch, if $M$ is a min-$2$TDS matrix, then we obtain another min-$2$TDS matrix after switching.  Since Lemma~\ref{lm:compones} implies that any $3 \times 4$ component of $M$ has at least $6$ ones, we can replace every $3 \times 4$ component in this way while still preserving the min-$2$TDS property, thereby reducing the possibilities we need to consider.

In the subsequent material, switchings as per Lemma~\ref{lm:switch} will arise repeatedly, and we will not indicate its use each time.

Lemmas~\ref{lm:compones} and~\ref{lm:switch} are the primary motivation for the next theorem (Theorem~\ref{th:compclass}).  We will repeatedly use the following $(0,1)$-matrices, which we give notation to:  For $x \geq 1$ and $y \geq 1$, we define $J(x,y)$ as the $x \times y$ all-$1$ matrix.  For $y \geq 6$, we define the $2 \times y$ matrix $A(y)$ to have the first row $(1,1,1,0,0,\ldots,0)$ and second row $(0,0,0,1,1,\ldots,1)$, depicted below for $y \in \{6,7,8,9,10\}$:
\begin{center}
\begin{tikzpicture}
\matrix[square matrix]{
|[fill=black!60]| & |[fill=black!60]| & |[fill=black!60]| & |[fill=white]| & |[fill=white]| & |[fill=white]| \\
|[fill=white]| & |[fill=white]| & |[fill=white]| & |[fill=black!60]| & |[fill=black!60]| & |[fill=black!60]| \\
};
\end{tikzpicture}
\quad
\begin{tikzpicture}
\matrix[square matrix]{
|[fill=black!60]| & |[fill=black!60]| & |[fill=black!60]| & |[fill=white]| & |[fill=white]| & |[fill=white]| & |[fill=white]| \\
|[fill=white]| & |[fill=white]| & |[fill=white]| & |[fill=black!60]| & |[fill=black!60]| & |[fill=black!60]| & |[fill=black!60]| \\
};
\end{tikzpicture}
\quad
\begin{tikzpicture}
\matrix[square matrix]{
|[fill=black!60]| & |[fill=black!60]| & |[fill=black!60]| & |[fill=white]| & |[fill=white]| & |[fill=white]| & |[fill=white]| & |[fill=white]| \\
|[fill=white]| & |[fill=white]| & |[fill=white]| & |[fill=black!60]| & |[fill=black!60]| & |[fill=black!60]| & |[fill=black!60]| & |[fill=black!60]| \\
};
\end{tikzpicture}
\quad
\begin{tikzpicture}
\matrix[square matrix]{
|[fill=black!60]| & |[fill=black!60]| & |[fill=black!60]| & |[fill=white]| & |[fill=white]| & |[fill=white]| & |[fill=white]| & |[fill=white]| & |[fill=white]| \\
|[fill=white]| & |[fill=white]| & |[fill=white]| & |[fill=black!60]| & |[fill=black!60]| & |[fill=black!60]| & |[fill=black!60]| & |[fill=black!60]| & |[fill=black!60]| \\
};
\end{tikzpicture}
\quad
\begin{tikzpicture}
\matrix[square matrix]{
|[fill=black!60]| & |[fill=black!60]| & |[fill=black!60]| & |[fill=white]| & |[fill=white]| & |[fill=white]| & |[fill=white]| & |[fill=white]| & |[fill=white]| & |[fill=white]| \\
|[fill=white]| & |[fill=white]| & |[fill=white]| & |[fill=black!60]| & |[fill=black!60]| & |[fill=black!60]| & |[fill=black!60]| & |[fill=black!60]| & |[fill=black!60]| & |[fill=black!60]| \\
};
\end{tikzpicture}
\end{center}
For $x \geq 3$ and $y \geq 3$, let $B(x,y)$ be the $x \times y$ $(0,1)$ matrix with an all-$1$ first row, an all-$1$ first column, and zeroes elsewhere, depicted below for $x \in \{3,4,5\}$ and $y \in \{3,4,5,6\}$:
\begin{center}
$\begin{array}{cccc}
\begin{tikzpicture}
\matrix[square matrix]{
|[fill=black!60]| & |[fill=black!60]| & |[fill=black!60]| \\
|[fill=black!60]| & |[fill=white]| & |[fill=white]| \\
|[fill=black!60]| & |[fill=white]| & |[fill=white]| \\
};
\end{tikzpicture}
&
\begin{tikzpicture}
\matrix[square matrix]{
|[fill=black!60]| & |[fill=black!60]| & |[fill=black!60]| & |[fill=black!60]| \\
|[fill=black!60]| & |[fill=white]| & |[fill=white]| & |[fill=white]| \\
|[fill=black!60]| & |[fill=white]| & |[fill=white]| & |[fill=white]| \\
};
\end{tikzpicture}
&
\begin{tikzpicture}
\matrix[square matrix]{
|[fill=black!60]| & |[fill=black!60]| & |[fill=black!60]| & |[fill=black!60]| & |[fill=black!60]| \\
|[fill=black!60]| & |[fill=white]| & |[fill=white]| & |[fill=white]| & |[fill=white]| \\
|[fill=black!60]| & |[fill=white]| & |[fill=white]| & |[fill=white]| & |[fill=white]| \\
};
\end{tikzpicture}
&
\begin{tikzpicture}
\matrix[square matrix]{
|[fill=black!60]| & |[fill=black!60]| & |[fill=black!60]| & |[fill=black!60]| & |[fill=black!60]| & |[fill=black!60]| \\
|[fill=black!60]| & |[fill=white]| & |[fill=white]| & |[fill=white]| & |[fill=white]| & |[fill=white]| \\
|[fill=black!60]| & |[fill=white]| & |[fill=white]| & |[fill=white]| & |[fill=white]| & |[fill=white]| \\
};
\end{tikzpicture}
\\
\begin{tikzpicture}
\matrix[square matrix]{
|[fill=black!60]| & |[fill=black!60]| & |[fill=black!60]| \\
|[fill=black!60]| & |[fill=white]| & |[fill=white]| \\
|[fill=black!60]| & |[fill=white]| & |[fill=white]| \\
|[fill=black!60]| & |[fill=white]| & |[fill=white]| \\
};
\end{tikzpicture}
&
\begin{tikzpicture}
\matrix[square matrix]{
|[fill=black!60]| & |[fill=black!60]| & |[fill=black!60]| & |[fill=black!60]| \\
|[fill=black!60]| & |[fill=white]| & |[fill=white]| & |[fill=white]| \\
|[fill=black!60]| & |[fill=white]| & |[fill=white]| & |[fill=white]| \\
|[fill=black!60]| & |[fill=white]| & |[fill=white]| & |[fill=white]| \\
};
\end{tikzpicture}
&
\begin{tikzpicture}
\matrix[square matrix]{
|[fill=black!60]| & |[fill=black!60]| & |[fill=black!60]| & |[fill=black!60]| & |[fill=black!60]| \\
|[fill=black!60]| & |[fill=white]| & |[fill=white]| & |[fill=white]| & |[fill=white]| \\
|[fill=black!60]| & |[fill=white]| & |[fill=white]| & |[fill=white]| & |[fill=white]| \\
|[fill=black!60]| & |[fill=white]| & |[fill=white]| & |[fill=white]| & |[fill=white]| \\
};
\end{tikzpicture}
&
\begin{tikzpicture}
\matrix[square matrix]{
|[fill=black!60]| & |[fill=black!60]| & |[fill=black!60]| & |[fill=black!60]| & |[fill=black!60]| & |[fill=black!60]| \\
|[fill=black!60]| & |[fill=white]| & |[fill=white]| & |[fill=white]| & |[fill=white]| & |[fill=white]| \\
|[fill=black!60]| & |[fill=white]| & |[fill=white]| & |[fill=white]| & |[fill=white]| & |[fill=white]| \\
|[fill=black!60]| & |[fill=white]| & |[fill=white]| & |[fill=white]| & |[fill=white]| & |[fill=white]| \\
};
\end{tikzpicture}
\\
\begin{tikzpicture}
\matrix[square matrix]{
|[fill=black!60]| & |[fill=black!60]| & |[fill=black!60]| \\
|[fill=black!60]| & |[fill=white]| & |[fill=white]| \\
|[fill=black!60]| & |[fill=white]| & |[fill=white]| \\
|[fill=black!60]| & |[fill=white]| & |[fill=white]| \\
|[fill=black!60]| & |[fill=white]| & |[fill=white]| \\
};
\end{tikzpicture}
&
\begin{tikzpicture}
\matrix[square matrix]{
|[fill=black!60]| & |[fill=black!60]| & |[fill=black!60]| & |[fill=black!60]| \\
|[fill=black!60]| & |[fill=white]| & |[fill=white]| & |[fill=white]| \\
|[fill=black!60]| & |[fill=white]| & |[fill=white]| & |[fill=white]| \\
|[fill=black!60]| & |[fill=white]| & |[fill=white]| & |[fill=white]| \\
|[fill=black!60]| & |[fill=white]| & |[fill=white]| & |[fill=white]| \\
};
\end{tikzpicture}
&
\begin{tikzpicture}
\matrix[square matrix]{
|[fill=black!60]| & |[fill=black!60]| & |[fill=black!60]| & |[fill=black!60]| & |[fill=black!60]| \\
|[fill=black!60]| & |[fill=white]| & |[fill=white]| & |[fill=white]| & |[fill=white]| \\
|[fill=black!60]| & |[fill=white]| & |[fill=white]| & |[fill=white]| & |[fill=white]| \\
|[fill=black!60]| & |[fill=white]| & |[fill=white]| & |[fill=white]| & |[fill=white]| \\
|[fill=black!60]| & |[fill=white]| & |[fill=white]| & |[fill=white]| & |[fill=white]| \\
};
\end{tikzpicture}
&
\begin{tikzpicture}
\matrix[square matrix]{
|[fill=black!60]| & |[fill=black!60]| & |[fill=black!60]| & |[fill=black!60]| & |[fill=black!60]| & |[fill=black!60]| \\
|[fill=black!60]| & |[fill=white]| & |[fill=white]| & |[fill=white]| & |[fill=white]| & |[fill=white]| \\
|[fill=black!60]| & |[fill=white]| & |[fill=white]| & |[fill=white]| & |[fill=white]| & |[fill=white]| \\
|[fill=black!60]| & |[fill=white]| & |[fill=white]| & |[fill=white]| & |[fill=white]| & |[fill=white]| \\
|[fill=black!60]| & |[fill=white]| & |[fill=white]| & |[fill=white]| & |[fill=white]| & |[fill=white]| \\
};
\end{tikzpicture}
\end{array}$
\end{center}
For $x \geq 4$ and $y \geq 4$, let $C(x,y)$ be the $x \times y$ $(0,1)$ matrix first row $(0,1,1,\ldots,1)$, first column $(0,1,1,\ldots,1)^T$, and zeroes elsewhere, depicted below for $x \in \{4,5,6\}$ and $y \in \{4,5,6,7\}$:
\begin{center}
$\begin{array}{cccc}
\begin{tikzpicture}
\matrix[square matrix]{
|[fill=white]| & |[fill=black!60]| & |[fill=black!60]| & |[fill=black!60]| \\
|[fill=black!60]| & |[fill=white]| & |[fill=white]| & |[fill=white]| \\
|[fill=black!60]| & |[fill=white]| & |[fill=white]| & |[fill=white]| \\
|[fill=black!60]| & |[fill=white]| & |[fill=white]| & |[fill=white]| \\
};
\end{tikzpicture}
&
\begin{tikzpicture}
\matrix[square matrix]{
|[fill=white]| & |[fill=black!60]| & |[fill=black!60]| & |[fill=black!60]| & |[fill=black!60]| \\
|[fill=black!60]| & |[fill=white]| & |[fill=white]| & |[fill=white]| & |[fill=white]| \\
|[fill=black!60]| & |[fill=white]| & |[fill=white]| & |[fill=white]| & |[fill=white]| \\
|[fill=black!60]| & |[fill=white]| & |[fill=white]| & |[fill=white]| & |[fill=white]| \\
};
\end{tikzpicture}
&
\begin{tikzpicture}
\matrix[square matrix]{
|[fill=white]| & |[fill=black!60]| & |[fill=black!60]| & |[fill=black!60]| & |[fill=black!60]| & |[fill=black!60]| \\
|[fill=black!60]| & |[fill=white]| & |[fill=white]| & |[fill=white]| & |[fill=white]| & |[fill=white]| \\
|[fill=black!60]| & |[fill=white]| & |[fill=white]| & |[fill=white]| & |[fill=white]| & |[fill=white]| \\
|[fill=black!60]| & |[fill=white]| & |[fill=white]| & |[fill=white]| & |[fill=white]| & |[fill=white]| \\
};
\end{tikzpicture}
&
\begin{tikzpicture}
\matrix[square matrix]{
|[fill=white]| & |[fill=black!60]| & |[fill=black!60]| & |[fill=black!60]| & |[fill=black!60]| & |[fill=black!60]| & |[fill=black!60]| \\
|[fill=black!60]| & |[fill=white]| & |[fill=white]| & |[fill=white]| & |[fill=white]| & |[fill=white]| & |[fill=white]| \\
|[fill=black!60]| & |[fill=white]| & |[fill=white]| & |[fill=white]| & |[fill=white]| & |[fill=white]| & |[fill=white]| \\
|[fill=black!60]| & |[fill=white]| & |[fill=white]| & |[fill=white]| & |[fill=white]| & |[fill=white]| & |[fill=white]| \\
};
\end{tikzpicture}
\\
\begin{tikzpicture}
\matrix[square matrix]{
|[fill=white]| & |[fill=black!60]| & |[fill=black!60]| & |[fill=black!60]| \\
|[fill=black!60]| & |[fill=white]| & |[fill=white]| & |[fill=white]| \\
|[fill=black!60]| & |[fill=white]| & |[fill=white]| & |[fill=white]| \\
|[fill=black!60]| & |[fill=white]| & |[fill=white]| & |[fill=white]| \\
|[fill=black!60]| & |[fill=white]| & |[fill=white]| & |[fill=white]| \\
};
\end{tikzpicture}
&
\begin{tikzpicture}
\matrix[square matrix]{
|[fill=white]| & |[fill=black!60]| & |[fill=black!60]| & |[fill=black!60]| & |[fill=black!60]| \\
|[fill=black!60]| & |[fill=white]| & |[fill=white]| & |[fill=white]| & |[fill=white]| \\
|[fill=black!60]| & |[fill=white]| & |[fill=white]| & |[fill=white]| & |[fill=white]| \\
|[fill=black!60]| & |[fill=white]| & |[fill=white]| & |[fill=white]| & |[fill=white]| \\
|[fill=black!60]| & |[fill=white]| & |[fill=white]| & |[fill=white]| & |[fill=white]| \\
};
\end{tikzpicture}
&
\begin{tikzpicture}
\matrix[square matrix]{
|[fill=white]| & |[fill=black!60]| & |[fill=black!60]| & |[fill=black!60]| & |[fill=black!60]| & |[fill=black!60]| \\
|[fill=black!60]| & |[fill=white]| & |[fill=white]| & |[fill=white]| & |[fill=white]| & |[fill=white]| \\
|[fill=black!60]| & |[fill=white]| & |[fill=white]| & |[fill=white]| & |[fill=white]| & |[fill=white]| \\
|[fill=black!60]| & |[fill=white]| & |[fill=white]| & |[fill=white]| & |[fill=white]| & |[fill=white]| \\
|[fill=black!60]| & |[fill=white]| & |[fill=white]| & |[fill=white]| & |[fill=white]| & |[fill=white]| \\
};
\end{tikzpicture}
&
\begin{tikzpicture}
\matrix[square matrix]{
|[fill=white]| & |[fill=black!60]| & |[fill=black!60]| & |[fill=black!60]| & |[fill=black!60]| & |[fill=black!60]| & |[fill=black!60]| \\
|[fill=black!60]| & |[fill=white]| & |[fill=white]| & |[fill=white]| & |[fill=white]| & |[fill=white]| & |[fill=white]| \\
|[fill=black!60]| & |[fill=white]| & |[fill=white]| & |[fill=white]| & |[fill=white]| & |[fill=white]| & |[fill=white]| \\
|[fill=black!60]| & |[fill=white]| & |[fill=white]| & |[fill=white]| & |[fill=white]| & |[fill=white]| & |[fill=white]| \\
|[fill=black!60]| & |[fill=white]| & |[fill=white]| & |[fill=white]| & |[fill=white]| & |[fill=white]| & |[fill=white]| \\
};
\end{tikzpicture}
\\
\begin{tikzpicture}
\matrix[square matrix]{
|[fill=white]| & |[fill=black!60]| & |[fill=black!60]| & |[fill=black!60]| \\
|[fill=black!60]| & |[fill=white]| & |[fill=white]| & |[fill=white]| \\
|[fill=black!60]| & |[fill=white]| & |[fill=white]| & |[fill=white]| \\
|[fill=black!60]| & |[fill=white]| & |[fill=white]| & |[fill=white]| \\
|[fill=black!60]| & |[fill=white]| & |[fill=white]| & |[fill=white]| \\
|[fill=black!60]| & |[fill=white]| & |[fill=white]| & |[fill=white]| \\
};
\end{tikzpicture}
&
\begin{tikzpicture}
\matrix[square matrix]{
|[fill=white]| & |[fill=black!60]| & |[fill=black!60]| & |[fill=black!60]| & |[fill=black!60]| \\
|[fill=black!60]| & |[fill=white]| & |[fill=white]| & |[fill=white]| & |[fill=white]| \\
|[fill=black!60]| & |[fill=white]| & |[fill=white]| & |[fill=white]| & |[fill=white]| \\
|[fill=black!60]| & |[fill=white]| & |[fill=white]| & |[fill=white]| & |[fill=white]| \\
|[fill=black!60]| & |[fill=white]| & |[fill=white]| & |[fill=white]| & |[fill=white]| \\
|[fill=black!60]| & |[fill=white]| & |[fill=white]| & |[fill=white]| & |[fill=white]| \\
};
\end{tikzpicture}
&
\begin{tikzpicture}
\matrix[square matrix]{
|[fill=white]| & |[fill=black!60]| & |[fill=black!60]| & |[fill=black!60]| & |[fill=black!60]| & |[fill=black!60]| \\
|[fill=black!60]| & |[fill=white]| & |[fill=white]| & |[fill=white]| & |[fill=white]| & |[fill=white]| \\
|[fill=black!60]| & |[fill=white]| & |[fill=white]| & |[fill=white]| & |[fill=white]| & |[fill=white]| \\
|[fill=black!60]| & |[fill=white]| & |[fill=white]| & |[fill=white]| & |[fill=white]| & |[fill=white]| \\
|[fill=black!60]| & |[fill=white]| & |[fill=white]| & |[fill=white]| & |[fill=white]| & |[fill=white]| \\
|[fill=black!60]| & |[fill=white]| & |[fill=white]| & |[fill=white]| & |[fill=white]| & |[fill=white]| \\
};
\end{tikzpicture}
&
\begin{tikzpicture}
\matrix[square matrix]{
|[fill=white]| & |[fill=black!60]| & |[fill=black!60]| & |[fill=black!60]| & |[fill=black!60]| & |[fill=black!60]| & |[fill=black!60]| \\
|[fill=black!60]| & |[fill=white]| & |[fill=white]| & |[fill=white]| & |[fill=white]| & |[fill=white]| & |[fill=white]| \\
|[fill=black!60]| & |[fill=white]| & |[fill=white]| & |[fill=white]| & |[fill=white]| & |[fill=white]| & |[fill=white]| \\
|[fill=black!60]| & |[fill=white]| & |[fill=white]| & |[fill=white]| & |[fill=white]| & |[fill=white]| & |[fill=white]| \\
|[fill=black!60]| & |[fill=white]| & |[fill=white]| & |[fill=white]| & |[fill=white]| & |[fill=white]| & |[fill=white]| \\
|[fill=black!60]| & |[fill=white]| & |[fill=white]| & |[fill=white]| & |[fill=white]| & |[fill=white]| & |[fill=white]| \\
};
\end{tikzpicture}
\\
\end{array}$
\end{center}

\begin{theo}\label{th:compclass}
For $n \geq 1$ and $m \geq 1$, excluding $(n,m) \in \{(1,1),(1,2),(2,1)\}$, there exists an $n \times m$ min-$2$TDS matrix $M$ whose components, up to permutations of the rows and columns, are all either $J(x,1)$ for $x \geq 3$ (or $J(1,y)$ for $y \geq 3$), or $J(x,2)$ for $x \geq 2$ (or $J(2,y)$ for $y \geq 2$), or $M=B(3,3)$.
\end{theo}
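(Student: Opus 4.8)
The plan is to begin with an arbitrary min-$2$TDS matrix $M$ and to normalise it by repeated count-preserving switchings until every component is an all-$1$ strip of width at most $2$, the sole exception being $M=B(3,3)$. The two facts that drive everything are recorded just before the theorem: the components of $M$ occupy pairwise disjoint row-sets and column-sets and are themselves $2$TDS matrices with no all-$0$ line, and (Lemma~\ref{lm:compones}) an $x\times y$ component carries at least $x+y-1$ ones. Lemma~\ref{lm:switch} is the engine: it lets me overwrite any union of components sitting in an $x\times y$ frame by any other $x\times y$ $2$TDS matrix with no all-$0$ line, so a replacement with strictly fewer ones is impossible (it would contradict minimality), while a replacement with equally many ones is always permissible. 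I would first check that each candidate block is a $2$TDS matrix and tabulate its weight: $J(x,1)$ has $x$ ones, $J(x,2)$ has $2x$, $B(3,3)$ has $5$, whereas the matrices $A(y)=J(1,3)\oplus J(1,y-3)$ and $C(x,y)=J(1,y-1)\oplus J(x-1,1)$ are in fact \emph{disconnected} unions of such strips, with $y$ and $x+y-2$ ones respectively.

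Next I would dispose of the case in which $M$ has an all-$0$ line (assume $n\leq m$, as the $2$TDS matrices are closed under transposition). If some column of a min-$2$TDS matrix is all-$0$, then every row needs at least two ones, forcing $\gamma_{\times 2,t}(K_n\Box K_m)\geq 2n$; since Theorem~\ref{th:manycols} gives $\gamma_{\times 2,t}(K_n\Box K_m)\leq 2n$, equality holds and the single block $J(n,2)$ is already a standard one-component min-$2$TDS matrix, so we are done. Hence I may assume from now on that $M$ has no all-$0$ row and no all-$0$ column, which is exactly the hypothesis needed for Lemma~\ref{lm:switch} to apply to $M$ itself.

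The heart of the argument is a classification of the components by dimension $x\times y$. A width-$1$ component is forced by the $\kappa$ bound to be $J(1,y)$ with $y\geq 3$ (or its transpose), already standard. For larger components I would use two reductions. First, a component with both sides $\geq 4$ can be overwritten by $C(x,y)$, which has $x+y-2<x+y-1$ ones and no all-$0$ line, contradicting minimality; so no such component occurs. Second, following the two worked examples preceding the theorem, any component with both sides $\geq 3$ may be replaced, without changing the count, by the ``L-shaped'' matrix $B(x,y)$, and a sufficiently long $3\times y$ block can then be cut into several single-row strips $J(1,\cdot)$ (for $y$ large this is strictly cheaper than $B(3,y)$). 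Iterating these moves shrinks the remaining possibilities to a short list of frames, at the bottom of which sits $B(3,3)$, the unique L-shape admitting no equal-or-cheaper strip realisation.

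The hard part is the finitely many small and medium frames that are locally irreducible --- the width-$2$ blocks of length $3,4,5$ and the short $3\times y$ blocks --- together with the bookkeeping forced by the no-all-$0$-line regime. Here the minimum-weight configuration on the frame is a \emph{connected} block that is neither a strip nor $B(3,3)$, so it cannot be dismantled inside its own rows and columns and the clean local switch used above is unavailable. I would handle these by enlarging the switch to absorb a transverse line from a neighbouring component (or to pass to the composite $A(y)$ and $C(x,y)$ forms), showing that such a block can always be traded against an adjacent component so that the global count does not rise and the offending frame disappears. The final task is to verify that $B(3,3)$ can survive only when it is the whole of $M$ (i.e.\ $n=m=3$): any extra rows or columns provide the room needed to reorganise it into strips. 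The frames $(n,m)\in\{(1,1),(1,2),(2,1)\}$ are excluded because they are too small to carry any $2$TDS matrix, and are set aside at the start.
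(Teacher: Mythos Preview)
Your proposal is correct and follows essentially the same approach as the paper: dispose of the all-$0$ line case via Theorem~\ref{th:manycols}, rule out $x\ge 4,\,y\ge 4$ components via the $C(x,y)$ switch, normalise $x\ge 3,\,y\ge 3$ components to $B(x,y)$, split long $3\times y$ blocks into strips, and then mop up the short $2\times y$ and $3\times y$ frames by absorbing a neighbouring component. The paper's proof is exactly this programme, with the ``hard part'' you flag carried out as an explicit finite case analysis (its Cases~IV and~VI); your sketch of that step---enlarging the switch to include an adjacent component and invoking $C(x',y')$ or the $2n$ bound when no suitable neighbour exists---is the right mechanism, and the only work remaining is to write out those few subcases.
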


\begin{proof}
We start with a min-$2$TDS matrix $M$.  (Such a matrix does not exist when $(n,m) \in \{(1,1),(1,2),(2,1)\}$, since $\delta(K_n \Box K_m) \leq 1 < 2$).

\textit{Case I}:  $M$ has an all-$0$ column (or, by symmetry, an all-$0$ row).  Then $M$ has at least $2n$ ones by Lemma~\ref{th:manycols}, which is the same number of ones as $J(n,2)$ with $m-2$ appended columns of zeros, in which case the theorem is true.  We henceforth assume $M$ has no all-$0$ rows nor all-$0$ columns.

\textit{Case II}: $M$ has a $2 \times y$ component with $y \geq 6$ (or its transpose).  We replace it by $A(y)$, but since $A(y)$ has $y$ ones, whereas any $2 \times y$ component has at least $y+1$ ones by Lemma~\ref{lm:compones}, we contradict the assumption that $M$ is a min-$2$TDS matrix.

\textit{Case III}: $M$ has a $x \times y$ component with $x \geq 3$ and $y \geq 3$.  We replace it by $B(x,y)$.  Lemma~\ref{lm:compones} implies that the number of ones has not increased, so we still have a min-$2$TDS matrix.

\textit{Case IV}: $M$ has a $2 \times y$ component $H$ with $2 \leq y \leq 5$ (or its transpose).  There must be an all-$1$ column for the component to be connected.  Hence $H$ has at least $y+1$ ones.
\begin{itemize}
 \item If there are no other components, then $M=H=J(2,2)$, as the other possible $2 \times y$ components are not min-$2$TDSs.
 \item If there is at least one other component $K$, then:
 \begin{itemize}
  \item If $H$ has at least two all-$1$ columns, then, since $2 \leq y \leq 5$, it is equivalent to one of the following:
\begin{center}
\begin{tikzpicture}
\matrix[square matrix]{
|[fill=black!60]| & |[fill=black!60]| \\
|[fill=black!60]| & |[fill=black!60]| \\
};
\end{tikzpicture}

\begin{tikzpicture}
\matrix[square matrix]{
|[fill=black!60]| & |[fill=black!60]| & |[fill=black!60]| \\
|[fill=black!60]| & |[fill=black!60]| & |[fill=white]| \\
};
\end{tikzpicture}
\quad
\begin{tikzpicture}
\matrix[square matrix]{
|[fill=black!60]| & |[fill=black!60]| & |[fill=black!60]| \\
|[fill=black!60]| & |[fill=black!60]| & |[fill=black!60]| \\
};
\end{tikzpicture}

\begin{tikzpicture}
\matrix[square matrix]{
|[fill=black!60]| & |[fill=black!60]| & |[fill=black!60]| & |[fill=white]| \\
|[fill=black!60]| & |[fill=black!60]| & |[fill=white]| & |[fill=black!60]| \\
};
\end{tikzpicture}
\quad
\begin{tikzpicture}
\matrix[square matrix]{
|[fill=black!60]| & |[fill=black!60]| & |[fill=black!60]| & |[fill=black!60]| \\
|[fill=black!60]| & |[fill=black!60]| & |[fill=white]| & |[fill=white]| \\
};
\end{tikzpicture}
\quad
\begin{tikzpicture}
\matrix[square matrix]{
|[fill=black!60]| & |[fill=black!60]| & |[fill=black!60]| & |[fill=black!60]| \\
|[fill=black!60]| & |[fill=black!60]| & |[fill=black!60]| & |[fill=white]| \\
};
\end{tikzpicture}
\quad
\begin{tikzpicture}
\matrix[square matrix]{
|[fill=black!60]| & |[fill=black!60]| & |[fill=black!60]| & |[fill=black!60]| \\
|[fill=black!60]| & |[fill=black!60]| & |[fill=black!60]| & |[fill=black!60]| \\
};
\end{tikzpicture}

\begin{tikzpicture}
\matrix[square matrix]{
|[fill=black!60]| & |[fill=black!60]| & |[fill=black!60]| & |[fill=black!60]| & |[fill=black!60]| \\
|[fill=black!60]| & |[fill=black!60]| & |[fill=white]| & |[fill=white]| & |[fill=white]| \\
};
\end{tikzpicture}
\quad
\begin{tikzpicture}
\matrix[square matrix]{
|[fill=black!60]| & |[fill=black!60]| & |[fill=black!60]| & |[fill=black!60]| & |[fill=white]| \\
|[fill=black!60]| & |[fill=black!60]| & |[fill=white]| & |[fill=white]| & |[fill=black!60]| \\
};
\end{tikzpicture}
\quad
\begin{tikzpicture}
\matrix[square matrix]{
|[fill=black!60]| & |[fill=black!60]| & |[fill=black!60]| & |[fill=black!60]| & |[fill=black!60]| \\
|[fill=black!60]| & |[fill=black!60]| & |[fill=black!60]| & |[fill=white]| & |[fill=white]| \\
};
\end{tikzpicture}
\quad
\begin{tikzpicture}
\matrix[square matrix]{
|[fill=black!60]| & |[fill=black!60]| & |[fill=black!60]| & |[fill=black!60]| & |[fill=white]| \\
|[fill=black!60]| & |[fill=black!60]| & |[fill=black!60]| & |[fill=white]| & |[fill=black!60]| \\
};
\end{tikzpicture}
\quad
\begin{tikzpicture}
\matrix[square matrix]{
|[fill=black!60]| & |[fill=black!60]| & |[fill=black!60]| & |[fill=black!60]| & |[fill=black!60]| \\
|[fill=black!60]| & |[fill=black!60]| & |[fill=black!60]| & |[fill=black!60]| & |[fill=white]| \\
};
\end{tikzpicture}
\quad
\begin{tikzpicture}
\matrix[square matrix]{
|[fill=black!60]| & |[fill=black!60]| & |[fill=black!60]| & |[fill=black!60]| & |[fill=black!60]| \\
|[fill=black!60]| & |[fill=black!60]| & |[fill=black!60]| & |[fill=black!60]| & |[fill=black!60]| \\
};
\end{tikzpicture}
\end{center}
In this case, the union $H \cup K$ thus has dimensions $x' \times y'$ where $x' \geq 3$ and $y' \geq 3$ and at least $x'+y'-1$ ones.  We replace this union of components with $B(x',y')$, which has $x'+y'-1$ ones.
  \item If $H$ has exactly one all-$1$ column, then, since $y \leq 5$ and $H$ is a $2$TDS, it is equivalent to:
\begin{center}
\begin{tikzpicture}
\matrix[square matrix]{
|[fill=black!60]| & |[fill=black!60]| & |[fill=black!60]| & |[fill=white]| & |[fill=white]| \\
|[fill=black!60]| & |[fill=white]| & |[fill=white]| & |[fill=black!60]| & |[fill=black!60]| \\
};
\end{tikzpicture}
\end{center}
 \begin{itemize}
  \item If the components of $M$ except for $H$ are all $J(1,3)$ matrices, then $M$ has more than $2n$ ones, whence Theorem~\ref{th:manycols} contradicts the assumption that $M$ is a min-$2$TDS matrix.
  \item Otherwise, we can choose $K$ to have at least $2$ rows.  The union $H \cup K$ has dimensions $x' \times y'$ where $x' \geq 4$ and $y' \geq 4$ and at least $x'+y'-2$ ones by Lemma~\ref{lm:compones}.  We replace $H \cup K$ by $C(x',y')$.
 \end{itemize}
\end{itemize}
\end{itemize}

\textit{Case V}: $M$ has a $3 \times y$ component with $y \geq 9$ (or its transpose).  As a result of Case~III, components have the form $B(3,y)$, and we perform the switches indicated below:
\begin{center}
$
\begin{array}{ccc}
\begin{tikzpicture}
\matrix[square matrix]{
|[fill=black!60]| & |[fill=black!60]| & |[fill=black!60]| & |[fill=black!60]| & |[fill=black!60]| & |[fill=black!60]| & |[fill=black!60]| & |[fill=black!60]| & |[fill=black!60]| \\
|[fill=black!60]| & |[fill=white]| & |[fill=white]| & |[fill=white]| & |[fill=white]| & |[fill=white]| & |[fill=white]| & |[fill=white]| & |[fill=white]| \\
|[fill=black!60]| & |[fill=white]| & |[fill=white]| & |[fill=white]| & |[fill=white]| & |[fill=white]| & |[fill=white]| & |[fill=white]| & |[fill=white]| \\
};
\end{tikzpicture}
&
\begin{tikzpicture}
\matrix[square matrix]{
|[fill=black!60]| & |[fill=black!60]| & |[fill=black!60]| & |[fill=black!60]| & |[fill=black!60]| & |[fill=black!60]| & |[fill=black!60]| & |[fill=black!60]| & |[fill=black!60]| & |[fill=black!60]| \\
|[fill=black!60]| & |[fill=white]| & |[fill=white]| & |[fill=white]| & |[fill=white]| & |[fill=white]| & |[fill=white]| & |[fill=white]| & |[fill=white]| & |[fill=white]| \\
|[fill=black!60]| & |[fill=white]| & |[fill=white]| & |[fill=white]| & |[fill=white]| & |[fill=white]| & |[fill=white]| & |[fill=white]| & |[fill=white]| & |[fill=white]| \\
};
\end{tikzpicture}
&
\begin{tikzpicture}
\matrix[square matrix]{
|[fill=black!60]| & |[fill=black!60]| & |[fill=black!60]| & |[fill=black!60]| & |[fill=black!60]| & |[fill=black!60]| & |[fill=black!60]| & |[fill=black!60]| & |[fill=black!60]| & |[fill=black!60]| & |[fill=black!60]| \\
|[fill=black!60]| & |[fill=white]| & |[fill=white]| & |[fill=white]| & |[fill=white]| & |[fill=white]| & |[fill=white]| & |[fill=white]| & |[fill=white]| & |[fill=white]| & |[fill=white]| \\
|[fill=black!60]| & |[fill=white]| & |[fill=white]| & |[fill=white]| & |[fill=white]| & |[fill=white]| & |[fill=white]| & |[fill=white]| & |[fill=white]| & |[fill=white]| & |[fill=white]| \\
};
\end{tikzpicture}
\\
\downarrow
&
\downarrow
&
\downarrow
\\
\begin{tikzpicture}
\matrix[square matrix]{
|[fill=black!60]| & |[fill=black!60]| & |[fill=black!60]| & |[fill=white]| & |[fill=white]| & |[fill=white]| & |[fill=white]| & |[fill=white]| & |[fill=white]| \\
|[fill=white]| & |[fill=white]| & |[fill=white]| & |[fill=black!60]| & |[fill=black!60]| & |[fill=black!60]| & |[fill=white]| & |[fill=white]| & |[fill=white]| \\
|[fill=white]| & |[fill=white]| & |[fill=white]| & |[fill=white]| & |[fill=white]| & |[fill=white]| & |[fill=black!60]| & |[fill=black!60]| & |[fill=black!60]| \\
};
\end{tikzpicture}
&
\begin{tikzpicture}
\matrix[square matrix]{
|[fill=black!60]| & |[fill=black!60]| & |[fill=black!60]| & |[fill=white]| & |[fill=white]| & |[fill=white]| & |[fill=white]| & |[fill=white]| & |[fill=white]| & |[fill=white]| \\
|[fill=white]| & |[fill=white]| & |[fill=white]| & |[fill=black!60]| & |[fill=black!60]| & |[fill=black!60]| & |[fill=white]| & |[fill=white]| & |[fill=white]| & |[fill=white]| \\
|[fill=white]| & |[fill=white]| & |[fill=white]| & |[fill=white]| & |[fill=white]| & |[fill=white]| & |[fill=black!60]| & |[fill=black!60]| & |[fill=black!60]| & |[fill=black!60]| \\
};
\end{tikzpicture}
&
\begin{tikzpicture}
\matrix[square matrix]{
|[fill=black!60]| & |[fill=black!60]| & |[fill=black!60]| & |[fill=white]| & |[fill=white]| & |[fill=white]| & |[fill=white]| & |[fill=white]| & |[fill=white]| & |[fill=white]| & |[fill=white]| \\
|[fill=white]| & |[fill=white]| & |[fill=white]| & |[fill=black!60]| & |[fill=black!60]| & |[fill=black!60]| & |[fill=white]| & |[fill=white]| & |[fill=white]| & |[fill=white]| & |[fill=white]| \\
|[fill=white]| & |[fill=white]| & |[fill=white]| & |[fill=white]| & |[fill=white]| & |[fill=white]| & |[fill=black!60]| & |[fill=black!60]| & |[fill=black!60]| & |[fill=black!60]| & |[fill=black!60]| \\
};
\end{tikzpicture}
\end{array}
$
\end{center}
and so on.  By Lemma~\ref{lm:switch} we obtain a $2$TDS matrix with fewer ones than $M$, giving a contradiction.

\textit{Case VI}: $M$ has a $3 \times y$ component with $3 \leq y \leq 8$ (or its transpose).  As a result of Case~III, components have the form $B(3,y)$:
\begin{center}
\begin{tikzpicture}
\matrix[square matrix]{
|[fill=black!60]| & |[fill=black!60]| & |[fill=black!60]| \\
|[fill=black!60]| & |[fill=white]| & |[fill=white]| \\
|[fill=black!60]| & |[fill=white]| & |[fill=white]| \\
};
\end{tikzpicture}
\quad
\begin{tikzpicture}
\matrix[square matrix]{
|[fill=black!60]| & |[fill=black!60]| & |[fill=black!60]| & |[fill=black!60]| \\
|[fill=black!60]| & |[fill=white]| & |[fill=white]| & |[fill=white]| \\
|[fill=black!60]| & |[fill=white]| & |[fill=white]| & |[fill=white]| \\
};
\end{tikzpicture}
\quad
\begin{tikzpicture}
\matrix[square matrix]{
|[fill=black!60]| & |[fill=black!60]| & |[fill=black!60]| & |[fill=black!60]| & |[fill=black!60]| \\
|[fill=black!60]| & |[fill=white]| & |[fill=white]| & |[fill=white]| & |[fill=white]| \\
|[fill=black!60]| & |[fill=white]| & |[fill=white]| & |[fill=white]| & |[fill=white]| \\
};
\end{tikzpicture}
\quad
\begin{tikzpicture}
\matrix[square matrix]{
|[fill=black!60]| & |[fill=black!60]| & |[fill=black!60]| & |[fill=black!60]| & |[fill=black!60]| & |[fill=black!60]| \\
|[fill=black!60]| & |[fill=white]| & |[fill=white]| & |[fill=white]| & |[fill=white]| & |[fill=white]| \\
|[fill=black!60]| & |[fill=white]| & |[fill=white]| & |[fill=white]| & |[fill=white]| & |[fill=white]| \\
};
\end{tikzpicture}
\quad
\begin{tikzpicture}
\matrix[square matrix]{
|[fill=black!60]| & |[fill=black!60]| & |[fill=black!60]| & |[fill=black!60]| & |[fill=black!60]| & |[fill=black!60]| & |[fill=black!60]| \\
|[fill=black!60]| & |[fill=white]| & |[fill=white]| & |[fill=white]| & |[fill=white]| & |[fill=white]| & |[fill=white]| \\
|[fill=black!60]| & |[fill=white]| & |[fill=white]| & |[fill=white]| & |[fill=white]| & |[fill=white]| & |[fill=white]| \\
};
\end{tikzpicture}
\quad
\begin{tikzpicture}
\matrix[square matrix]{
|[fill=black!60]| & |[fill=black!60]| & |[fill=black!60]| & |[fill=black!60]| & |[fill=black!60]| & |[fill=black!60]| & |[fill=black!60]| & |[fill=black!60]| \\
|[fill=black!60]| & |[fill=white]| & |[fill=white]| & |[fill=white]| & |[fill=white]| & |[fill=white]| & |[fill=white]| & |[fill=white]| \\
|[fill=black!60]| & |[fill=white]| & |[fill=white]| & |[fill=white]| & |[fill=white]| & |[fill=white]| & |[fill=white]| & |[fill=white]| \\
};
\end{tikzpicture}
\end{center}
If there are no other components, then $M=B(3,y)$.  However, $B(3,y)$ is not a min-$2$TDS for $5 \leq y \leq 8$,  so $M \in \{B(3,3), B(3,4)\}$ (or $M=B(3,4)^T=B(4,3)$).  However, If $M=B(3,4)$, we can replace it with
\begin{center}
\begin{tikzpicture}
\matrix[square matrix]{
|[fill=black!60]| & |[fill=black!60]| & |[fill=white]| & |[fill=white]| \\
|[fill=black!60]| & |[fill=black!60]| & |[fill=white]| & |[fill=white]| \\
|[fill=black!60]| & |[fill=black!60]| & |[fill=white]| & |[fill=white]| \\
};
\end{tikzpicture}
\end{center}
which has the same number of ones as $B(3,4)$ (and likewise if $M=B(4,3)$).  If there is another component $K$, then $B(3,y) \cup K$ has dimensions $x' \times y'$ where $x' \geq 4$ and $y' \geq 4$ and at least $x'+y'-2$ ones.  We replace $B(3,y) \cup K$ with $C(x',y')$, which has exactly $x'+y'-2$ ones.

\textit{Case VII}: $M$ has a $x \times y$ component with $x \geq 4$ and $y \geq 4$.  As a result of Case~III, components have the form $B(x,y)$, which we can replace by $C(x,y)$ to obtain a $2$TDS matrix with fewer ones than $M$, giving a contradiction.
\end{proof}

Theorem~\ref{th:compclass} implies that, for all $n \geq 1$ and $m \geq 1$ except when $(n,m) \in \{(1,1),(1,2),(2,1)\}$, there is some min-$2$TDS matrix whose components belong only to a strongly restricted family of components.  In the next theorem, we restrict this family of component further when considering matrices with no all-$0$ rows and no all-$0$ columns.

\begin{theo}\label{th:compclass2}
For $n \geq 1$ and $m \geq 1$ except $(n,m) \in \{(1,1),(1,2),(2,1),(2,2),(3,3),(3,4),(4,3)\}$, if there exists an $n \times m$ min-$2$TDS matrix with no all-$0$ rows and no all-$0$ columns, then there exists an $n \times m$ min-$2$TDS matrix $M$ whose components, up to permutations of the rows and columns, are all $J(1,3)$ or $J(3,1)$, except possibly for
\begin{itemize}
 \item exactly one $J(x,1)$ component with $4 \leq x \leq 7$;
 \item exactly one $J(1,y)$ component with $4 \leq y \leq 7$; or
 \item exactly one $J(x,1)$ component with $4 \leq x \leq 5$ and exactly one $J(1,y)$ component with $4 \leq y \leq 5$.
\end{itemize}
Further, the number of ones in $M$ is
$$\begin{cases}
  \lceil 3(n+m)/4 \rceil +1 & \text{ if $m \equiv 3n+4 \pmod 8$} \\
  \lceil 3(n+m)/4 \rceil & \text{ otherwise.}
  \end{cases}$$
\end{theo}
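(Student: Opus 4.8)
The plan is to work entirely in the matrix formulation and to prove the count as matching lower and upper bounds, reading off the stated structure from the extremal configuration. First I would establish the lower bound. Under the hypothesis, fix a min-$2$TDS matrix $M_0$ with no all-$0$ rows or columns; then the ones in any nonempty row (resp.\ column) all lie in a single component of $\Gamma(M_0)$, so the components partition the rows and the columns, and if the components are $x_i \times y_i$ then $\sum_i x_i = n$ and $\sum_i y_i = m$. No component can have $x_i + y_i \leq 3$ (a $1\times 2$ or $2\times 1$ block is not a $2$TDS), so Lemma~\ref{lm:compones} gives each component at least $x_i + y_i - 1 \geq \tfrac34(x_i+y_i)$ ones, whence
\[
\gamma_{\times 2,t}(K_n \Box K_m) = |M_0| \geq \tfrac{3}{4}\sum_i (x_i+y_i) = \tfrac{3}{4}(n+m),
\]
and, being an integer, at least $\lceil 3(n+m)/4\rceil$. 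Equality in the per-component estimate forces every component to be a triomino $J(1,3)$ or $J(3,1)$.

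Next I would isolate the residue obstruction responsible for the exceptional $+1$. If $M_0$ consisted solely of $a$ copies of $J(1,3)$ and $b$ copies of $J(3,1)$, then $a+3b=n$ and $3a+b=m$, so $3n-m = 8b \equiv 0 \pmod 8$; a pure-triomino matrix therefore exists only when $m \equiv 3n \pmod 8$. When $m \equiv 3n+4 \pmod 8$ we have $n+m \equiv 0 \pmod 4$, so $\tfrac34(n+m)$ is already an integer, but the obstruction rules out a pure-triomino matrix, forcing a non-triomino component; the displayed inequality is then strict and $|M_0| \geq \tfrac34(n+m)+1 = \lceil 3(n+m)/4\rceil + 1$. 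This reproduces the two cases of the claimed formula.

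For the matching upper bound together with the component structure, I would transform $M_0$ into the advertised form by a bounded sequence of switches. By Theorem~\ref{th:compclass} I may assume the components of $M_0$ are among $J(x,1)$, $J(1,y)$ (the listed dimension $\geq 3$), $J(x,2)$, $J(2,y)$ (the other dimension $\geq 2$), the $B(3,3)$ case being excluded along with $(3,3)$. First I would eliminate every $J(x,2)$ or $J(2,y)$ whose larger dimension is at least $3$, and every oversized $J(x,1)$ or $J(1,y)$, by merging such a component with neighbouring triominoes and re-tiling the resulting $x'\times y'$ block via Lemma~\ref{lm:switch}; Lemma~\ref{lm:compones} certifies that no such switch increases the number of ones, so minimality of $M_0$ is preserved and the offending blocks disappear. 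It then remains to show that the leftover defect beyond a pure-triomino tiling always localises into a single special component (one $J(x,1)$ or $J(1,y)$ with $4 \leq x,y \leq 7$) or a single pair (a $J(x,1)$ and a $J(1,y)$ with $4 \leq x,y \leq 5$), dictated by $r := (m-3n)\bmod 8$: for each $r \in \{1,\dots,7\}$ the indicated special shifts the triomino residue to $0$ and contributes excess exactly $\lceil 3(n+m)/4\rceil - \tfrac34(n+m)$ (which is $1$ precisely when $r=4$), so the transformed matrix realises the formula and lies in the stated family.

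The hard part will be this last step: confirming that the residue defect always condenses into one of the three permitted special configurations with exactly the claimed number of ones. This is a finite but delicate switching analysis — in particular the stubborn $J(2,2)$ block (excess $1$, and forcing $m \equiv 3n+4 \pmod 8$) must be re-expressed as $J(7,1)$, $J(1,7)$, or $J(5,1) \cup J(1,5)$ by absorbing two triominoes and re-tiling, e.g.\ $J(2,2) \cup J(3,1) \cup J(3,1) \to J(7,1) \cup J(1,3)$ preserves both the $8\times 4$ shape and the ten ones, but is only available when enough triominoes are present. The degenerate configurations where no such switch exists are exactly the excluded pairs $(2,2)$, $(3,3)$, $(3,4)$, $(4,3)$ (besides the three $\delta<2$ pairs), so the analysis must be arranged to quarantine these while guaranteeing the switch in every remaining case.
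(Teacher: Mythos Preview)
Your lower-bound half is a genuinely cleaner route than the paper's. The paper never isolates a lower bound: it transforms a given min-$2$TDS matrix by switches that never increase the number of ones, reaches a canonical form, and only then counts the ones case by case. Your inequality $|M_0| \ge \sum_i(x_i+y_i-1) \ge \tfrac34\sum_i(x_i+y_i)=\tfrac34(n+m)$, together with the observation that equality forces every component to be a triomino (hence impossible when $m \equiv 3n+4 \pmod 8$, a residue for which $\tfrac34(n+m)$ is already an integer), delivers the target value directly and explains \emph{why} the $+1$ appears. That is a real gain in transparency over the paper's bookkeeping in its Cases~IV--VIII.

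The structural half, however, has a gap rooted in reading Theorem~\ref{th:compclass} only at the level of its statement. You therefore carry $J(x,2)$, $J(2,y)$, and in particular a ``stubborn $J(2,2)$'' into the analysis and spend effort proposing to dissolve it by absorbing triominoes. But the \emph{proof} of Theorem~\ref{th:compclass} shows more than its statement: width-$2$ components in its conclusion arise only via Case~I (an all-$0$ line present) or in the excluded small $(n,m)$. Under your hypothesis that proof already outputs a min-$2$TDS matrix with no all-$0$ lines whose components are solely $J(x,1)$ and $J(1,y)$ with $x,y\ge 3$; the $J(2,2)$ worry never materialises and your $J(2,2)\cup J(3,1)\cup J(3,1)\to J(7,1)\cup J(1,3)$ switch is unnecessary. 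From that starting point the paper's reduction is concrete and short: repeatedly replace two parallel long strips $J(1,y),J(1,y')$ with $y,y'\ge 4$ by $J(1,3)\cup J(1,y+y'-3)$ (and symmetrically for columns) until at most one long strip survives in each direction; then exhibit explicit one-\emph{reducing} switches whenever a surviving $J(1,y)$ with $y\ge 6$ coexists with some $J(x,1)$, $x\ge 4$, or a $J(1,y)$ with $y\ge 8$ coexists with two $J(3,1)$'s, contradicting minimality. Your ``merge with neighbouring triominoes and re-tile'' gestures at this but does not name these moves, and without them you cannot pin the special-component sizes to $\le 7$ (resp.\ $\le 5$ in the paired case) or certify that the surviving configuration realises exactly $\lceil 3(n+m)/4\rceil$ or $\lceil 3(n+m)/4\rceil+1$ ones.
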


\begin{proof}
The proof of Theorem~\ref{th:compclass} implies we can assume that each component of $M$ has one of the forms: $J(x,1)$ for $x \geq 3$, or $J(1,y)$ for $y \geq 3$.

\textit{Case I}: $M$ has two components $J(1,y)$ and $J(1,y')$ with $y \geq 4$ and $y' \geq 4$.  (Or, by symmetry, $M$ has two components $J(x,1)$ and $J(x',1)$ with $x \geq 4$ and $x' \geq 4$.)  We replace them by the two components $J(1,3)$ and $J(1,y+y'-3)$.  An example is drawn below when $y=4$ and $y'=5$:
\begin{center}
$
\begin{array}{c}
\begin{tikzpicture}
\matrix[square matrix]{
|[fill=black!60]| & |[fill=black!60]| & |[fill=black!60]| & |[fill=black!60]| & |[fill=white]| & |[fill=white]| & |[fill=white]| & |[fill=white]| & |[fill=white]| \\
|[fill=white]| & |[fill=white]| & |[fill=white]| & |[fill=white]| & |[fill=black!60]| & |[fill=black!60]| & |[fill=black!60]| & |[fill=black!60]| & |[fill=black!60]| \\
};
\end{tikzpicture}
\\
\downarrow
\\
\begin{tikzpicture}
\matrix[square matrix]{
|[fill=black!60]| & |[fill=black!60]| & |[fill=black!60]| & |[fill=white]| & |[fill=white]| & |[fill=white]| & |[fill=white]| & |[fill=white]| & |[fill=white]| \\
|[fill=white]| & |[fill=white]| & |[fill=white]| & |[fill=black!60]| & |[fill=black!60]| & |[fill=black!60]| & |[fill=black!60]| & |[fill=black!60]| & |[fill=black!60]| \\
};
\end{tikzpicture}
\end{array}
$
\end{center}
We repeat this process until there is at most one component of the form $J(1,y)$ with $y \geq 4$, and likewise at most one component of the form $J(x,1)$ with $x \geq 4$.

\textit{Case II}: $M$ has a $J(1,y)$ component with $y \geq 6$ and a $J(x,1)$ component with $x \geq 4$.  (Or, by symmetry, $M$ has a $J(x,1)$ component with $x \geq 6$ and a $J(1,y)$ component with $y \geq 4$.)  We apply the switch indicated below:
\begin{center}
$
\begin{array}{cccc}
\begin{tikzpicture}
\matrix[square matrix]{
|[fill=black!60]| & |[fill=black!60]| & |[fill=black!60]| & |[fill=black!60]| & |[fill=black!60]| & |[fill=black!60]| & |[fill=white]| \\
|[fill=white]| & |[fill=white]| & |[fill=white]| & |[fill=white]| & |[fill=white]| & |[fill=white]| & |[fill=black!60]| \\
|[fill=white]| & |[fill=white]| & |[fill=white]| & |[fill=white]| & |[fill=white]| & |[fill=white]| & |[fill=black!60]| \\
|[fill=white]| & |[fill=white]| & |[fill=white]| & |[fill=white]| & |[fill=white]| & |[fill=white]| & |[fill=black!60]| \\
|[fill=white]| & |[fill=white]| & |[fill=white]| & |[fill=white]| & |[fill=white]| & |[fill=white]| & |[fill=black!60]| \\
};
\end{tikzpicture}
&
\begin{tikzpicture}
\matrix[square matrix]{
|[fill=black!60]| & |[fill=black!60]| & |[fill=black!60]| & |[fill=black!60]| & |[fill=black!60]| & |[fill=black!60]| & |[fill=white]| \\
|[fill=white]| & |[fill=white]| & |[fill=white]| & |[fill=white]| & |[fill=white]| & |[fill=white]| & |[fill=black!60]| \\
|[fill=white]| & |[fill=white]| & |[fill=white]| & |[fill=white]| & |[fill=white]| & |[fill=white]| & |[fill=black!60]| \\
|[fill=white]| & |[fill=white]| & |[fill=white]| & |[fill=white]| & |[fill=white]| & |[fill=white]| & |[fill=black!60]| \\
|[fill=white]| & |[fill=white]| & |[fill=white]| & |[fill=white]| & |[fill=white]| & |[fill=white]| & |[fill=black!60]| \\
|[fill=white]| & |[fill=white]| & |[fill=white]| & |[fill=white]| & |[fill=white]| & |[fill=white]| & |[fill=black!60]| \\
};
\end{tikzpicture}
&
\begin{tikzpicture}
\matrix[square matrix]{
|[fill=black!60]| & |[fill=black!60]| & |[fill=black!60]| & |[fill=black!60]| & |[fill=black!60]| & |[fill=black!60]| & |[fill=black!60]| & |[fill=white]| \\
|[fill=white]| & |[fill=white]| & |[fill=white]| & |[fill=white]| & |[fill=white]| & |[fill=white]| & |[fill=white]| & |[fill=black!60]| \\
|[fill=white]| & |[fill=white]| & |[fill=white]| & |[fill=white]| & |[fill=white]| & |[fill=white]| & |[fill=white]| & |[fill=black!60]| \\
|[fill=white]| & |[fill=white]| & |[fill=white]| & |[fill=white]| & |[fill=white]| & |[fill=white]| & |[fill=white]| & |[fill=black!60]| \\
|[fill=white]| & |[fill=white]| & |[fill=white]| & |[fill=white]| & |[fill=white]| & |[fill=white]| & |[fill=white]| & |[fill=black!60]| \\
};
\end{tikzpicture}
&
\begin{tikzpicture}
\matrix[square matrix]{
|[fill=black!60]| & |[fill=black!60]| & |[fill=black!60]| & |[fill=black!60]| & |[fill=black!60]| & |[fill=black!60]| & |[fill=black!60]| & |[fill=white]| \\
|[fill=white]| & |[fill=white]| & |[fill=white]| & |[fill=white]| & |[fill=white]| & |[fill=white]| & |[fill=white]| & |[fill=black!60]| \\
|[fill=white]| & |[fill=white]| & |[fill=white]| & |[fill=white]| & |[fill=white]| & |[fill=white]| & |[fill=white]| & |[fill=black!60]| \\
|[fill=white]| & |[fill=white]| & |[fill=white]| & |[fill=white]| & |[fill=white]| & |[fill=white]| & |[fill=white]| & |[fill=black!60]| \\
|[fill=white]| & |[fill=white]| & |[fill=white]| & |[fill=white]| & |[fill=white]| & |[fill=white]| & |[fill=white]| & |[fill=black!60]| \\
|[fill=white]| & |[fill=white]| & |[fill=white]| & |[fill=white]| & |[fill=white]| & |[fill=white]| & |[fill=white]| & |[fill=black!60]| \\
};
\end{tikzpicture}
\\
\downarrow
&
\downarrow
&
\downarrow
\\
\begin{tikzpicture}
\matrix[square matrix]{
|[fill=black!60]| & |[fill=black!60]| & |[fill=black!60]| & |[fill=white]| & |[fill=white]| & |[fill=white]| & |[fill=white]| \\
|[fill=white]| & |[fill=white]| & |[fill=white]| & |[fill=black!60]| & |[fill=black!60]| & |[fill=black!60]| & |[fill=white]| \\
|[fill=white]| & |[fill=white]| & |[fill=white]| & |[fill=white]| & |[fill=white]| & |[fill=white]| & |[fill=black!60]| \\
|[fill=white]| & |[fill=white]| & |[fill=white]| & |[fill=white]| & |[fill=white]| & |[fill=white]| & |[fill=black!60]| \\
|[fill=white]| & |[fill=white]| & |[fill=white]| & |[fill=white]| & |[fill=white]| & |[fill=white]| & |[fill=black!60]| \\
};
\end{tikzpicture}
&
\begin{tikzpicture}
\matrix[square matrix]{
|[fill=black!60]| & |[fill=black!60]| & |[fill=black!60]| & |[fill=white]| & |[fill=white]| & |[fill=white]| & |[fill=white]| \\
|[fill=white]| & |[fill=white]| & |[fill=white]| & |[fill=black!60]| & |[fill=black!60]| & |[fill=black!60]| & |[fill=white]| \\
|[fill=white]| & |[fill=white]| & |[fill=white]| & |[fill=white]| & |[fill=white]| & |[fill=white]| & |[fill=black!60]| \\
|[fill=white]| & |[fill=white]| & |[fill=white]| & |[fill=white]| & |[fill=white]| & |[fill=white]| & |[fill=black!60]| \\
|[fill=white]| & |[fill=white]| & |[fill=white]| & |[fill=white]| & |[fill=white]| & |[fill=white]| & |[fill=black!60]| \\
|[fill=white]| & |[fill=white]| & |[fill=white]| & |[fill=white]| & |[fill=white]| & |[fill=white]| & |[fill=black!60]| \\
};
\end{tikzpicture}
&
\begin{tikzpicture}
\matrix[square matrix]{
|[fill=black!60]| & |[fill=black!60]| & |[fill=black!60]| & |[fill=white]| & |[fill=white]| & |[fill=white]| & |[fill=white]| & |[fill=white]| \\
|[fill=white]| & |[fill=white]| & |[fill=white]| & |[fill=black!60]| & |[fill=black!60]| & |[fill=black!60]| & |[fill=black!60]| & |[fill=white]| \\
|[fill=white]| & |[fill=white]| & |[fill=white]| & |[fill=white]| & |[fill=white]| & |[fill=white]| & |[fill=white]| & |[fill=black!60]| \\
|[fill=white]| & |[fill=white]| & |[fill=white]| & |[fill=white]| & |[fill=white]| & |[fill=white]| & |[fill=white]| & |[fill=black!60]| \\
|[fill=white]| & |[fill=white]| & |[fill=white]| & |[fill=white]| & |[fill=white]| & |[fill=white]| & |[fill=white]| & |[fill=black!60]| \\
};
\end{tikzpicture}
&
\begin{tikzpicture}
\matrix[square matrix]{
|[fill=black!60]| & |[fill=black!60]| & |[fill=black!60]| & |[fill=white]| & |[fill=white]| & |[fill=white]| & |[fill=white]| & |[fill=white]| \\
|[fill=white]| & |[fill=white]| & |[fill=white]| & |[fill=black!60]| & |[fill=black!60]| & |[fill=black!60]| & |[fill=black!60]| & |[fill=white]| \\
|[fill=white]| & |[fill=white]| & |[fill=white]| & |[fill=white]| & |[fill=white]| & |[fill=white]| & |[fill=white]| & |[fill=black!60]| \\
|[fill=white]| & |[fill=white]| & |[fill=white]| & |[fill=white]| & |[fill=white]| & |[fill=white]| & |[fill=white]| & |[fill=black!60]| \\
|[fill=white]| & |[fill=white]| & |[fill=white]| & |[fill=white]| & |[fill=white]| & |[fill=white]| & |[fill=white]| & |[fill=black!60]| \\
|[fill=white]| & |[fill=white]| & |[fill=white]| & |[fill=white]| & |[fill=white]| & |[fill=white]| & |[fill=white]| & |[fill=black!60]| \\
};
\end{tikzpicture}
\end{array}
$
\end{center}
and so on in other cases.  This reduces the number of ones, contradicting that $M$ is a min-$2$TDS matrix.

\textit{Case III}: $M$ has a $J(1,y)$ component with $y \geq 10$.  (Or, by symmetry, $M$ has a $J(x,1)$ component with $x \geq 10$.)  There must also be a $J(3,1)$ component in $M$, otherwise every row contains at least $3$ ones, contradicting that $M$ is a min-$2$TDS matrix.  We apply the switch indicated below:
\begin{center}
$
\begin{array}{cccc}
\begin{tikzpicture}
\matrix[square matrix]{
|[fill=black!60]| & |[fill=black!60]| & |[fill=black!60]| & |[fill=black!60]| & |[fill=black!60]| & |[fill=black!60]| & |[fill=black!60]| & |[fill=black!60]| & |[fill=black!60]| & |[fill=black!60]| & |[fill=white]| \\
|[fill=white]| & |[fill=white]| & |[fill=white]| & |[fill=white]| & |[fill=white]| & |[fill=white]| & |[fill=white]| & |[fill=white]| & |[fill=white]| & |[fill=white]| & |[fill=black!60]| \\
|[fill=white]| & |[fill=white]| & |[fill=white]| & |[fill=white]| & |[fill=white]| & |[fill=white]| & |[fill=white]| & |[fill=white]| & |[fill=white]| & |[fill=white]| & |[fill=black!60]| \\
|[fill=white]| & |[fill=white]| & |[fill=white]| & |[fill=white]| & |[fill=white]| & |[fill=white]| & |[fill=white]| & |[fill=white]| & |[fill=white]| & |[fill=white]| & |[fill=black!60]| \\
};
\end{tikzpicture}
&
\begin{tikzpicture}
\matrix[square matrix]{
|[fill=black!60]| & |[fill=black!60]| & |[fill=black!60]| & |[fill=black!60]| & |[fill=black!60]| & |[fill=black!60]| & |[fill=black!60]| & |[fill=black!60]| & |[fill=black!60]| & |[fill=black!60]| & |[fill=black!60]| & |[fill=white]| \\
|[fill=white]| & |[fill=white]| & |[fill=white]| & |[fill=white]| & |[fill=white]| & |[fill=white]| & |[fill=white]| & |[fill=white]| & |[fill=white]| & |[fill=white]| & |[fill=white]| & |[fill=black!60]| \\
|[fill=white]| & |[fill=white]| & |[fill=white]| & |[fill=white]| & |[fill=white]| & |[fill=white]| & |[fill=white]| & |[fill=white]| & |[fill=white]| & |[fill=white]| & |[fill=white]| & |[fill=black!60]| \\
|[fill=white]| & |[fill=white]| & |[fill=white]| & |[fill=white]| & |[fill=white]| & |[fill=white]| & |[fill=white]| & |[fill=white]| & |[fill=white]| & |[fill=white]| & |[fill=white]| & |[fill=black!60]| \\
};
\end{tikzpicture}
&
\begin{tikzpicture}
\matrix[square matrix]{
|[fill=black!60]| & |[fill=black!60]| & |[fill=black!60]| & |[fill=black!60]| & |[fill=black!60]| & |[fill=black!60]| & |[fill=black!60]| & |[fill=black!60]| & |[fill=black!60]| & |[fill=black!60]| & |[fill=black!60]| & |[fill=black!60]| & |[fill=white]| \\
|[fill=white]| & |[fill=white]| & |[fill=white]| & |[fill=white]| & |[fill=white]| & |[fill=white]| & |[fill=white]| & |[fill=white]| & |[fill=white]| & |[fill=white]| & |[fill=white]| & |[fill=white]| & |[fill=black!60]| \\
|[fill=white]| & |[fill=white]| & |[fill=white]| & |[fill=white]| & |[fill=white]| & |[fill=white]| & |[fill=white]| & |[fill=white]| & |[fill=white]| & |[fill=white]| & |[fill=white]| & |[fill=white]| & |[fill=black!60]| \\
|[fill=white]| & |[fill=white]| & |[fill=white]| & |[fill=white]| & |[fill=white]| & |[fill=white]| & |[fill=white]| & |[fill=white]| & |[fill=white]| & |[fill=white]| & |[fill=white]| & |[fill=white]| & |[fill=black!60]| \\
};
\end{tikzpicture}
&
\begin{tikzpicture}
\matrix[square matrix]{
|[fill=black!60]| & |[fill=black!60]| & |[fill=black!60]| & |[fill=black!60]| & |[fill=black!60]| & |[fill=black!60]| & |[fill=black!60]| & |[fill=black!60]| & |[fill=black!60]| & |[fill=black!60]| & |[fill=black!60]| & |[fill=black!60]| & |[fill=black!60]| & |[fill=white]| \\
|[fill=white]| & |[fill=white]| & |[fill=white]| & |[fill=white]| & |[fill=white]| & |[fill=white]| & |[fill=white]| & |[fill=white]| & |[fill=white]| & |[fill=white]| & |[fill=white]| & |[fill=white]| & |[fill=white]| & |[fill=black!60]| \\
|[fill=white]| & |[fill=white]| & |[fill=white]| & |[fill=white]| & |[fill=white]| & |[fill=white]| & |[fill=white]| & |[fill=white]| & |[fill=white]| & |[fill=white]| & |[fill=white]| & |[fill=white]| & |[fill=white]| & |[fill=black!60]| \\
|[fill=white]| & |[fill=white]| & |[fill=white]| & |[fill=white]| & |[fill=white]| & |[fill=white]| & |[fill=white]| & |[fill=white]| & |[fill=white]| & |[fill=white]| & |[fill=white]| & |[fill=white]| & |[fill=white]| & |[fill=black!60]| \\
};
\end{tikzpicture}
\\
\downarrow
&
\downarrow
&
\downarrow
&
\downarrow
\\
\begin{tikzpicture}
\matrix[square matrix]{
|[fill=black!60]| & |[fill=black!60]| & |[fill=black!60]| & |[fill=white]| & |[fill=white]| & |[fill=white]| & |[fill=white]| & |[fill=white]| & |[fill=white]| & |[fill=white]| & |[fill=white]| \\
|[fill=white]| & |[fill=white]| & |[fill=white]| & |[fill=black!60]| & |[fill=black!60]| & |[fill=black!60]| & |[fill=white]| & |[fill=white]| & |[fill=white]| & |[fill=white]| & |[fill=white]| \\
|[fill=white]| & |[fill=white]| & |[fill=white]| & |[fill=white]| & |[fill=white]| & |[fill=white]| & |[fill=black!60]| & |[fill=black!60]| & |[fill=black!60]| & |[fill=white]| & |[fill=white]| \\
|[fill=white]| & |[fill=white]| & |[fill=white]| & |[fill=white]| & |[fill=white]| & |[fill=white]| & |[fill=white]| & |[fill=white]| & |[fill=black!60]| & |[fill=black!60]| & |[fill=black!60]| \\
};
\end{tikzpicture}
&
\begin{tikzpicture}
\matrix[square matrix]{
|[fill=black!60]| & |[fill=black!60]| & |[fill=black!60]| & |[fill=white]| & |[fill=white]| & |[fill=white]| & |[fill=white]| & |[fill=white]| & |[fill=white]| & |[fill=white]| & |[fill=white]| & |[fill=white]| \\
|[fill=white]| & |[fill=white]| & |[fill=white]| & |[fill=black!60]| & |[fill=black!60]| & |[fill=black!60]| & |[fill=white]| & |[fill=white]| & |[fill=white]| & |[fill=white]| & |[fill=white]| & |[fill=white]| \\
|[fill=white]| & |[fill=white]| & |[fill=white]| & |[fill=white]| & |[fill=white]| & |[fill=white]| & |[fill=black!60]| & |[fill=black!60]| & |[fill=black!60]| & |[fill=white]| & |[fill=white]| & |[fill=white]| \\
|[fill=white]| & |[fill=white]| & |[fill=white]| & |[fill=white]| & |[fill=white]| & |[fill=white]| & |[fill=white]| & |[fill=white]| & |[fill=white]| & |[fill=black!60]| & |[fill=black!60]| & |[fill=black!60]| \\
};
\end{tikzpicture}
&
\begin{tikzpicture}
\matrix[square matrix]{
|[fill=black!60]| & |[fill=black!60]| & |[fill=black!60]| & |[fill=black!60]| & |[fill=white]| & |[fill=white]| & |[fill=white]| & |[fill=white]| & |[fill=white]| & |[fill=white]| & |[fill=white]| & |[fill=white]| & |[fill=white]| \\
|[fill=white]| & |[fill=white]| & |[fill=white]| & |[fill=white]| & |[fill=black!60]| & |[fill=black!60]| & |[fill=black!60]| & |[fill=white]| & |[fill=white]| & |[fill=white]| & |[fill=white]| & |[fill=white]| & |[fill=white]| \\
|[fill=white]| & |[fill=white]| & |[fill=white]| & |[fill=white]| & |[fill=white]| & |[fill=white]| & |[fill=white]| & |[fill=black!60]| & |[fill=black!60]| & |[fill=black!60]| & |[fill=white]| & |[fill=white]| & |[fill=white]| \\
|[fill=white]| & |[fill=white]| & |[fill=white]| & |[fill=white]| & |[fill=white]| & |[fill=white]| & |[fill=white]| & |[fill=white]| & |[fill=white]| & |[fill=white]| & |[fill=black!60]| & |[fill=black!60]| & |[fill=black!60]| \\
};
\end{tikzpicture}
&
\begin{tikzpicture}
\matrix[square matrix]{
|[fill=black!60]| & |[fill=black!60]| & |[fill=black!60]| & |[fill=black!60]| & |[fill=black!60]| & |[fill=white]| & |[fill=white]| & |[fill=white]| & |[fill=white]| & |[fill=white]| & |[fill=white]| & |[fill=white]| & |[fill=white]| & |[fill=white]| \\
|[fill=white]| & |[fill=white]| & |[fill=white]| & |[fill=white]| & |[fill=white]| & |[fill=black!60]| & |[fill=black!60]| & |[fill=black!60]| & |[fill=white]| & |[fill=white]| & |[fill=white]| & |[fill=white]| & |[fill=white]| & |[fill=white]| \\
|[fill=white]| & |[fill=white]| & |[fill=white]| & |[fill=white]| & |[fill=white]| & |[fill=white]| & |[fill=white]| & |[fill=white]| & |[fill=black!60]| & |[fill=black!60]| & |[fill=black!60]| & |[fill=white]| & |[fill=white]| & |[fill=white]| \\
|[fill=white]| & |[fill=white]| & |[fill=white]| & |[fill=white]| & |[fill=white]| & |[fill=white]| & |[fill=white]| & |[fill=white]| & |[fill=white]| & |[fill=white]| & |[fill=white]| & |[fill=black!60]| & |[fill=black!60]| & |[fill=black!60]| \\
};
\end{tikzpicture}
\end{array}
$
\end{center}
and so on in other cases.  These switches all reduce the number of ones, contradicting that $M$ is a min-$2$TDS matrix.

\textit{Case III}: $M$ has a $J(1,y)$ component with $8 \leq y \leq 9$.  (Or, by symmetry, $M$ has a $J(x,1)$ component with $8 \leq x \leq 9$.)  There must also be at least two $J(3,1)$ components in $M$, otherwise the average number of ones per row is at least $(8+3)/4>2$, contradicting that $M$ is a min-$2$TDS matrix.  We apply the switch indicated below:
\begin{center}
$\begin{array}{cc}
\begin{tikzpicture}
\matrix[square matrix]{
|[fill=black!60]| & |[fill=black!60]| & |[fill=black!60]| & |[fill=black!60]| & |[fill=black!60]| & |[fill=black!60]| & |[fill=black!60]| & |[fill=black!60]| & |[fill=white]| & |[fill=white]| \\
|[fill=white]| & |[fill=white]| & |[fill=white]| & |[fill=white]| & |[fill=white]| & |[fill=white]| & |[fill=white]| & |[fill=white]| & |[fill=black!60]| & |[fill=white]| \\
|[fill=white]| & |[fill=white]| & |[fill=white]| & |[fill=white]| & |[fill=white]| & |[fill=white]| & |[fill=white]| & |[fill=white]| & |[fill=black!60]| & |[fill=white]| \\
|[fill=white]| & |[fill=white]| & |[fill=white]| & |[fill=white]| & |[fill=white]| & |[fill=white]| & |[fill=white]| & |[fill=white]| & |[fill=black!60]| & |[fill=white]| \\
|[fill=white]| & |[fill=white]| & |[fill=white]| & |[fill=white]| & |[fill=white]| & |[fill=white]| & |[fill=white]| & |[fill=white]| & |[fill=white]| & |[fill=black!60]| \\
|[fill=white]| & |[fill=white]| & |[fill=white]| & |[fill=white]| & |[fill=white]| & |[fill=white]| & |[fill=white]| & |[fill=white]| & |[fill=white]| & |[fill=black!60]| \\
|[fill=white]| & |[fill=white]| & |[fill=white]| & |[fill=white]| & |[fill=white]| & |[fill=white]| & |[fill=white]| & |[fill=white]| & |[fill=white]| & |[fill=black!60]| \\
};
\end{tikzpicture}
&
\begin{tikzpicture}
\matrix[square matrix]{
|[fill=black!60]| & |[fill=black!60]| & |[fill=black!60]| & |[fill=black!60]| & |[fill=black!60]| & |[fill=black!60]| & |[fill=black!60]| & |[fill=black!60]| & |[fill=black!60]| & |[fill=white]| & |[fill=white]| \\
|[fill=white]| & |[fill=white]| & |[fill=white]| & |[fill=white]| & |[fill=white]| & |[fill=white]| & |[fill=white]| & |[fill=white]| & |[fill=white]| & |[fill=black!60]| & |[fill=white]| \\
|[fill=white]| & |[fill=white]| & |[fill=white]| & |[fill=white]| & |[fill=white]| & |[fill=white]| & |[fill=white]| & |[fill=white]| & |[fill=white]| & |[fill=black!60]| & |[fill=white]| \\
|[fill=white]| & |[fill=white]| & |[fill=white]| & |[fill=white]| & |[fill=white]| & |[fill=white]| & |[fill=white]| & |[fill=white]| & |[fill=white]| & |[fill=black!60]| & |[fill=white]| \\
|[fill=white]| & |[fill=white]| & |[fill=white]| & |[fill=white]| & |[fill=white]| & |[fill=white]| & |[fill=white]| & |[fill=white]| & |[fill=white]| & |[fill=white]| & |[fill=black!60]| \\
|[fill=white]| & |[fill=white]| & |[fill=white]| & |[fill=white]| & |[fill=white]| & |[fill=white]| & |[fill=white]| & |[fill=white]| & |[fill=white]| & |[fill=white]| & |[fill=black!60]| \\
|[fill=white]| & |[fill=white]| & |[fill=white]| & |[fill=white]| & |[fill=white]| & |[fill=white]| & |[fill=white]| & |[fill=white]| & |[fill=white]| & |[fill=white]| & |[fill=black!60]| \\
};
\end{tikzpicture}
\\
\downarrow
&
\downarrow
\\
\begin{tikzpicture}
\matrix[square matrix]{
|[fill=black!60]| & |[fill=black!60]| & |[fill=black!60]| & |[fill=white]| & |[fill=white]| & |[fill=white]| & |[fill=white]| & |[fill=white]| & |[fill=white]| & |[fill=white]| \\
|[fill=white]| & |[fill=white]| & |[fill=white]| & |[fill=black!60]| & |[fill=black!60]| & |[fill=black!60]| & |[fill=white]| & |[fill=white]| & |[fill=white]| & |[fill=white]| \\
|[fill=white]| & |[fill=white]| & |[fill=white]| & |[fill=white]| & |[fill=white]| & |[fill=white]| & |[fill=black!60]| & |[fill=black!60]| & |[fill=black!60]| & |[fill=white]| \\
|[fill=white]| & |[fill=white]| & |[fill=white]| & |[fill=white]| & |[fill=white]| & |[fill=white]| & |[fill=white]| & |[fill=white]| & |[fill=white]| & |[fill=black!60]| \\
|[fill=white]| & |[fill=white]| & |[fill=white]| & |[fill=white]| & |[fill=white]| & |[fill=white]| & |[fill=white]| & |[fill=white]| & |[fill=white]| & |[fill=black!60]| \\
|[fill=white]| & |[fill=white]| & |[fill=white]| & |[fill=white]| & |[fill=white]| & |[fill=white]| & |[fill=white]| & |[fill=white]| & |[fill=white]| & |[fill=black!60]| \\
|[fill=white]| & |[fill=white]| & |[fill=white]| & |[fill=white]| & |[fill=white]| & |[fill=white]| & |[fill=white]| & |[fill=white]| & |[fill=white]| & |[fill=black!60]| \\
};
\end{tikzpicture}
&
\begin{tikzpicture}
\matrix[square matrix]{
|[fill=black!60]| & |[fill=black!60]| & |[fill=black!60]| & |[fill=white]| & |[fill=white]| & |[fill=white]| & |[fill=white]| & |[fill=white]| & |[fill=white]| & |[fill=white]| & |[fill=white]| \\
|[fill=white]| & |[fill=white]| & |[fill=white]| & |[fill=black!60]| & |[fill=black!60]| & |[fill=black!60]| & |[fill=white]| & |[fill=white]| & |[fill=white]| & |[fill=white]| & |[fill=white]| \\
|[fill=white]| & |[fill=white]| & |[fill=white]| & |[fill=white]| & |[fill=white]| & |[fill=white]| & |[fill=black!60]| & |[fill=black!60]| & |[fill=black!60]| & |[fill=black!60]| & |[fill=white]| \\
|[fill=white]| & |[fill=white]| & |[fill=white]| & |[fill=white]| & |[fill=white]| & |[fill=white]| & |[fill=white]| & |[fill=white]| & |[fill=white]| & |[fill=white]| & |[fill=black!60]| \\
|[fill=white]| & |[fill=white]| & |[fill=white]| & |[fill=white]| & |[fill=white]| & |[fill=white]| & |[fill=white]| & |[fill=white]| & |[fill=white]| & |[fill=white]| & |[fill=black!60]| \\
|[fill=white]| & |[fill=white]| & |[fill=white]| & |[fill=white]| & |[fill=white]| & |[fill=white]| & |[fill=white]| & |[fill=white]| & |[fill=white]| & |[fill=white]| & |[fill=black!60]| \\
|[fill=white]| & |[fill=white]| & |[fill=white]| & |[fill=white]| & |[fill=white]| & |[fill=white]| & |[fill=white]| & |[fill=white]| & |[fill=white]| & |[fill=white]| & |[fill=black!60]| \\
};
\end{tikzpicture}
\\
\end{array}$
\end{center}
These switches all reduce the number of ones, contradicting that $M$ is a min-$2$TDS matrix.

Cases I--III prove the first half of the theorem statement.  Now let $a$ be the number of $J(3,1)$ components and let $b$ be the number of $J(1,3)$ components.

\textit{Case IV}: $M$ only has $J(1,3)$ and $J(3,1)$ components.  Then
\begin{align*}
n &= 3a+b, \\
m &= a+3b,
\end{align*}
and the number of ones in $M$ is $3(a+b)=3(n+m)/4=\lceil 3(n+m)/4 \rceil$.  In this case, we have $m \equiv 3n \pmod 8$, since by adding in a $J(1,3)$ or $J(3,1)$ component, we either increase $m$ by $1$ and $n$ by $3$, or we increase $n$ by $1$ and $m$ by $3$, and either way $m \equiv 3n \pmod 8$ remains true.

\textit{Case V}: $M$ has a $J(1,4)$ component and a $J(4,1)$ component.  Then
\begin{align*}
n &= 5+3a+b, \\
m &= 5+a+3b,
\end{align*}
and the number of ones in $M$ is $3(a+b)+8=3(n+m-10)/4+8=\lceil 3(n+m)/4 \rceil$.  In this case, we have $m \equiv 3n+6 \pmod 8$.

\textit{Case VI}: $M$ has a $J(1,5)$ component and a $J(4,1)$ component.  (Or, by symmetry, $M$ has a $J(1,4)$ component and a $J(5,1)$ component.)  Then
\begin{align*}
n &= 5+3a+b, \\
m &= 6+a+3b,
\end{align*}
and the number of ones in $M$ is $3(a+b)+9=3(n+m-11)/4+9=\lceil 3(n+m)/4 \rceil$.  In this case, we have $m \equiv 3n+7 \pmod 8$ (or $m \equiv 3n+3 \pmod 8$ in the transposed case).

\textit{Case VII}: $M$ has a $J(1,5)$ component and a $J(5,1)$ component.  Then
\begin{align*}
n &= 6+3a+b, \\
m &= 6+a+3b,
\end{align*}
and the number of ones in $M$ is $3(a+b)+10=3(n+m-12)/4+10=\lceil 3(n+m)/4 \rceil+1$.   In this case, we have $m \equiv 3n+4 \pmod 8$.

\textit{Case VIII}: $M$ has a $J(1,y)$ component with $4 \leq y \leq 7$ and no $J(x,1)$ component with $x \geq 4$.  (Or, by symmetry, $M$ has a $J(x,1)$ component with $4 \leq x \leq 7$ and no $J(1,y)$ component with $y \geq 4$.)  We have
\begin{align*}
n &= 1+3a+b \\
m &= y+a+3b
\end{align*}
and the number of ones in $M$ is
\begin{align*}
3(a+b)+y &= 3(n+m-1-y)/4+y \\
  &= 3(n+m)/4+(y-3)/4 \\
  &=
  \begin{cases}
  3(n+m)/4+1/4 & \text{ if $y=4$} \\
  3(n+m)/4+1/2 & \text{ if $y=5$} \\
  3(n+m)/4+3/4 & \text{ if $y=6$} \\
  3(n+m)/4+1 & \text{ if $y=7$}. \\
  \end{cases}
\end{align*}
Since the number of ones is an integer quantity, this is equal to $\lceil 3(n+m)/4 \rceil$ except when $y=7$ when it is equal to $\lceil 3(n+m)/4 \rceil+1$.  In this case we have $m \equiv 3n+y-3 \pmod 8$, i.e., $m \equiv 3n+1, 3n+2, 3n+3, 3n+4 \pmod 8$ when $y=4,5,6,7$, respectively (and $m \equiv 3n-3x+1 \pmod 8$ in the transposed case, i.e., $m \equiv 3n+5, 3n+2, 3n+7, 3n+4 \pmod 8$ when $x=4,5,6,7$, respectively).
\end{proof}

With our highly restricted families of $n \times m$ min-$2$TDS matrices, we find a general formula for $\gamma_{\times 2,t}(K_n \Box K_m)$ by simply counting the ones in all possible cases.  This gives the following proposition.  The subsequent Proposition~\ref{pr:k2realize} summarizes the matrices that need to be considered to find an example min-$2$TDS matrix for arbitrary $n \geq 1$ and $m \geq 1$, when possible.

\begin{prop}\label{pr:k2count}
For $n \geq 1$ and $m \geq n$, excluding $(n,m) \in \{(1,1),(1,2)\}$,
$$
\gamma_{\times 2,t}(K_n \Box K_m) =
\begin{cases}
3 & \text{if } n=1 \text{ and } m \geq 3, \\
2n & \text{if } n \geq 2 \text{ and } m \geq \lfloor (5n-4)/3 \rfloor+1, \\
\lceil 3(n+m)/4 \rceil+1 & \text{if } m \leq \lfloor (5n-4)/3 \rfloor \text{ and } m \equiv 3n+4 \pmod 8, \\
\lceil 3(n+m)/4 \rceil & \text{otherwise}. \\
\end{cases}
$$
Hence, in the square case, for $n \geq 2$,
$$\gamma_{\times 2,t}(K_n \Box K_n) =
\begin{cases}
\lceil 3n/2 \rceil+1 & \text{if } n \equiv 2 \pmod 4, \\
\lceil 3n/2 \rceil & \text{otherwise}. \\
\end{cases}
$$
\end{prop}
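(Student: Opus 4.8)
The plan is to reduce the whole statement, for $n \geq 2$ and $m \geq n$, to the identity
$\gamma_{\times 2,t}(K_n \Box K_m) = \min\bigl(2n,\, F(n,m)\bigr)$,
where $F(n,m)$ denotes $\lceil 3(n+m)/4 \rceil + 1$ if $m \equiv 3n+4 \pmod 8$ and $\lceil 3(n+m)/4 \rceil$ otherwise, and then to unpack this minimum into the four stated cases. The case $n = 1$ is immediate from \eqref{eq:Knmsimplebound1row} of Theorem~\ref{th:manycols} with $k = 2$, which gives $\gamma_{\times 2,t}(K_1 \Box K_m) = 3$ for $m \geq 3$, so I would dispose of it first. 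The square case $m = n$ then follows by substitution, using $n \leq \lfloor (5n-4)/3 \rfloor$ for $n \geq 2$ and the fact that $n \equiv 3n+4 \pmod 8$ simplifies to $n \equiv 2 \pmod 4$.

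For $n \geq 2$ the argument rests on a dichotomy according to whether an optimal solution uses an all-$0$ line. On the one hand, the $n \times m$ matrix with ones in its first two columns is a $2$TDS matrix with $2n$ ones (Theorem~\ref{th:manycols}), so $\gamma_{\times 2,t}(K_n \Box K_m) \leq 2n$. On the other hand, Lemma~\ref{lm:sparserow} together with $m \geq n$ shows that any $2$TDS matrix with an all-$0$ row or column has at least $2n$ ones. To convert this into the exact value $\min(2n, F(n,m))$, I would reason as follows. A min-$2$TDS matrix with fewer than $2n$ ones cannot have an all-$0$ line, so once I exhibit a construction realizing $F(n,m)$ ones with no all-$0$ line and $F(n,m) \leq 2n$, the matching lower bound $\gamma \geq F(n,m)$ follows from Theorem~\ref{th:compclass2} (if $\gamma < F$, a min matrix would have no all-$0$ line, whence $\gamma = F$, a contradiction), forcing $\gamma = F(n,m)$; and whenever $F(n,m) > 2n$, no min matrix without an all-$0$ line can exist, since it would contradict $\gamma \leq 2n$, so every min matrix has an all-$0$ line and $\gamma = 2n$.

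The constructions realizing $F(n,m)$ ones are exactly the matrices tiled by $J(1,3)$ and $J(3,1)$ components plus the single exceptional component, enumerated residue-by-residue in Cases~IV--VIII of the proof of Theorem~\ref{th:compclass2}; each such tiling covers every row and column and so has no all-$0$ line. For each residue $m - 3n \bmod 8$ I would solve the associated linear system (for instance $n = 3a+b$, $m = a+3b$ in the purely $J(1,3)/J(3,1)$ case) for nonnegative integer multiplicities and check that solutions exist throughout $n \leq m \leq \lfloor (5n-4)/3 \rfloor$; since $\lfloor (5n-4)/3 \rfloor < 5n/3 < 3n$ there is ample slack, though small $n$ and the triples $(2,2),(3,3),(3,4)$ excluded from Theorem~\ref{th:compclass2} must be checked by hand (here $C_4$, the matrix $B(3,3)$, and the six-element set of Figure~\ref{fi:rook34} settle matters directly). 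The threshold is then pinned down by routine ceiling-and-congruence arithmetic: for $m \leq \lfloor (5n-4)/3 \rfloor$ one verifies $F(n,m) \leq 2n$, the $+1$ correction when $m \equiv 3n+4$ being precisely what makes $\lfloor (5n-4)/3 \rfloor$ rather than $\lfloor 5n/3 \rfloor$ the correct cutoff, while for $m \geq \lfloor (5n-4)/3 \rfloor + 1$ one verifies $F(n,m) \geq 2n$.

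I expect the main obstacle to be the verification that these component constructions genuinely exist for every residue class and every $m$ in the range $n \leq m \leq \lfloor (5n-4)/3 \rfloor$ -- the ``constructive'' content promised in the abstract and partly deferred to Proposition~\ref{pr:k2realize}. The individual linear systems are trivial, but guaranteeing nonnegativity of all multiplicities simultaneously across the eight residues, at the boundary value $m = \lfloor (5n-4)/3 \rfloor$, and for the handful of small exceptional pairs, demands careful bookkeeping rather than a single clean inequality.
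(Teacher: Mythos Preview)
Your proposal is correct and follows essentially the same route as the paper: reduce to $\gamma_{\times 2,t}(K_n \Box K_m)=\min(2n,F(n,m))$ via the all-$0$-line dichotomy and Theorem~\ref{th:compclass2}, then unpack the minimum by the threshold arithmetic. The paper's own proof is in fact terser than yours on the point you flag as the main obstacle---it never explicitly verifies that the tilings exist throughout $n\le m\le\lfloor(5n-4)/3\rfloor$, instead deferring the constructions to Proposition~\ref{pr:k2realize}---so your caution there is well placed rather than a divergence.
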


\begin{proof}
If $(n,m) \in \{(1,1),(1,2)\}$, then there are no $n \times m$ min-$2$TDS matrices.  If $n=1$ and $m \geq 3$, then any $1 \times m$ $(0,1)$-matrix with exactly $3$ ones is a min-$2$TDS matrix.  If $(n,m) \in \{(2,2),(3,3)\}$, then the following are $n \times m$ min-$2$TDS matrices:
\begin{center}
$\begin{array}{ccc}
\begin{tikzpicture}
\matrix[square matrix]{
|[fill=black!60]| & |[fill=black!60]| \\
|[fill=black!60]| & |[fill=black!60]| \\
};
\end{tikzpicture}
& \quad &
\begin{tikzpicture}
\matrix[square matrix]{
|[fill=black!60]| & |[fill=black!60]| & |[fill=black!60]| \\
|[fill=black!60]| & |[fill=white]| & |[fill=white]| \\
|[fill=black!60]| & |[fill=white]| & |[fill=white]| \\
};
\end{tikzpicture}
\\
\lceil 3(2+2)/4 \rceil+1=4 \text{ ones} & & \lceil 3(3+3)/4 \rceil=5 \text{ ones}
\end{array}$
\end{center}
Now assume $n \geq 2$ and $(n,m) \not\in \{(2,2),(3,3)\}$.

Let $M$ be an $n \times m$ min-$2$TDS matrix.  If $M$ has an all-$0$ row, then $M$ has at least $2n$ ones, as in the proof of Theorem~\ref{th:manycols}.  Likewise, if $M$ has an all-$0$ column, then $M$ has at least $2m \geq 2n$ ones.  If $M$ has no all-$0$ rows and no all-$0$ columns, then the number of ones in $M$ is
$$\begin{cases}
  \lceil 3(n+m)/4 \rceil +1 & \text{ if $m \equiv 3n+4 \pmod 8$} \\
  \lceil 3(n+m)/4 \rceil & \text{ otherwise}
  \end{cases}$$
by Theorem~\ref{th:compclass2}.  Hence
$$
\gamma_{\times 2,t}(K_n \Box K_m) =
\begin{cases}
3 & \text{if } n=1 \text{ and } m \geq 3, \\
\mathrm{min}(2n,\lceil 3(n+m)/4 \rceil+1) & \text{if } n \geq 2 \text{ and } m \equiv 3n+4 \pmod 8, \\
\mathrm{min}(2n,\lceil 3(n+m)/4 \rceil) & \text{otherwise}. \\
\end{cases}
$$
If $m>(5n-4)/3$ (which occurs when $m \geq \lfloor (5n-4)/3 \rfloor+1$), then $3(n+m)/4 > 2n-1$ implying $3(n+m)/4 \geq 2n$, in which case $$\mathrm{min}(2n,\lceil 3(n+m)/4 \rceil)=\mathrm{min}(2n,\lceil 3(n+m)/4 \rceil+1)=2n.$$  If $m \leq (5n-4)/3$, then $3(n+m)/4 \leq 2n-1<2n$, in which case $$\mathrm{min}(2n,\lceil 3(n+m)/4 \rceil)=\lceil 3(n+m)/4 \rceil$$ and
\begin{align*}
\mathrm{min}(2n,\lceil 3(n+m)/4 \rceil+1)=\lceil 3(n+m)/4 \rceil+1. & \qedhere
\end{align*}
\end{proof}

Theorem~\ref{th:totalprodbound} implies that if $n \geq 3$ and $m \geq 3$ then $$\gamma_{\times 2,t}(K_n \Box K_m) \geq \frac{1}{4} \gamma_{\times 2,t}(K_n) \gamma_{\times 2,t}(K_m)=\frac{9}{4}=2.25.$$	 In contrast, Proposition~\ref{pr:k2count} implies $$\gamma_{\times 2,t}(K_n \Box K_m)=\Theta(n)$$ when $3 \leq n \leq m$ and $n \rightarrow \infty$.

\begin{prop}\label{pr:k2realize}
Equality is realized in Proposition~\ref{pr:k2count} by the following min-$2$TDS matrices:
\begin{itemize}
 \item Two sporadic cases:
\begin{center}
$
\caption{Small min-$4$TDS matrices.}\label{ta:smallk4}
\end{table}

A natural way to extend this work is to find a general formula for $\gamma_{\times k,t}(K_n \Box K_m)$ in the $k=3$ or $k=4$ cases (as in Tables~\ref{ta:smallk3} and~\ref{ta:smallk4}).  It seems reasonable to suspect that the component switching method used here for $k=2$ will continue to be useful for larger $k$ values.  Other possible ways to extend this work are: (a) consider higher dimensions, e.g., $K_n \Box K_m \Box K_{\ell}$, and (b) consider graphs which have $K_n \Box K_m$ as a spanning subgraph, such as Latin rectangle graphs and the $n \times m$ queen's graph.



\begin{thebibliography}{20}

\bibitem{BDG} B. Bre\v{s}ar, P. Dorbec, W. Goddard, B. Hartnell, M. Henning, S. Klav\v{z}ar, D. Rall.  Vizing's conjecture: A survey and recent results, {\em J. Graph Theory.} {\bf 69} No.1 (2012)

\bibitem{BM} B. Bre\v{s}ar, M. A. Henning, and S. Klav\v{z}ar. On integer domination in graphs and Vizing-like problems, {\em Taiwanese J. Math.} {\bf 10} No.5 (2006) 1317-1328.

\bibitem{BHRo} B. Bre\v{s}ar, M. A. Henning, D. F. Rall, Paired-domination of Cartesian products of graphs, {\em Util. Math.} {\bf 73} (2007) 255-265.

\bibitem{B2011} P. A. Burchett, On the border queens problem and $k$-tuple domination on the rook's graph, {\em  Congr. Numer.}, {\bf 209} (2011): 179-187.

\bibitem{BLL} P. A. Burchett, D. Lane, J. A. Lachniet, $k$-tuple and $k$-domination on the rook's graph and other results, {\em  Congr. Numer.}, {\bf 199} (2009): 187-204.

\bibitem{CM} K. Choudhary, S. Margulies, I.V. Hicks, A note on total and paired domination of Cartesian product graphs, {\em Electron. J. Combin.} {\bf 20} No.3 (2013).

\bibitem{CMH} K. Choudhary, S. Margulies, I. V. Hicks, Integer domination of Cartesian product graphs, {\em Discrete Math.} {\bf 338} No.7 (2015).

\bibitem{CS1}  E. W. Clark, S. Suen, An inequality related to Vizing's Conjecture, {\em Electron. J. Combin.} {\bf 7} Note.4 (2000).

\bibitem{FRDM} D. C. Fisher, J. Ryan, G. Domke, and A. Majumdar. Fractional domination of strong direct products. {\em Discrete Appl. Math.} {\bf 50} No.1 (1994) 89-91.

\bibitem{HH3}  F. Harary, T. W. Haynes, Double domination in graphs, {\em Ars Combin.} {\bf 55} (2000) 201-213.

\bibitem{HHS5}  T. W. Haynes, S. T. Hedetniemi, P. J. Slater, {\em Fundamentals of Domination in Graphs}, Marcel Dekker, New York, 1998.

\bibitem{HHS6}  T. W. Haynes, S. T. Hedetniemi, P. J. Slater, {\em Domination in Graphs: Advanced Topics}, Marcel Dekker, New York, 1998.

\bibitem{HK7}  M. A. Henning, A. P. Kazemi, $k$-tuple total domination in cross products of graphs, {\em J. Comb. Optim.} {\bf 24} (2011) 339-346.

\bibitem{HK8}  M. A. Henning, A. P. Kazemi, $k$-tuple total domination in graphs, {\em Discrete Appl. Math.} {\bf 158} (2010) 1006-1011.

\bibitem{HR9}  M. A. Henning, D. F. Rall, On the total domination number of Cartesian products of graphs, {\em Graphs Combin.} {\bf 21} (2005) 63-69.

\bibitem{HPT} P. T. Ho, A note on the total domination number. {\em Util. Math.} {\bf 77} (2008).

\bibitem{HJ} X. M. Hou, F. Jiang, Paired domination of Cartesian products of graphs, {\em J. Math. Res. Exposition} {\bf 30} No.1 (2010) 181-185.

\bibitem{HL} X. M. Hou, Y. Lu, On the $\{k\}$-domination number of Cartesian products of graphs, {\em Discrete Math.} {\bf 309} (2009) 3413-3419.

\bibitem{IK} W. Imrich, S. Klavžar, {\em Product Graphs: Structure and Recognition}, John Wiley \& Sons, New York, 2000.

\bibitem{KP}  A. P. Kazemi, B. Pahlavsay, $k$-tuple total domination in supergeneralized Petersen graphs, {\em Comm. Math. App.} {\bf 2} No.1 (2011) 21-30.

\bibitem{Viz} V. G. Vizing, Some unsolved problems in graph theory, {\em Usp. Mat. Nauk} {\bf 23} (1968) 117-134.
\end{thebibliography}
\end{document}